\pdfoutput=1
\documentclass[12pt,reqno]{amsart}
\usepackage[T1]{fontenc}
\usepackage[utf8]{inputenc}
\usepackage{amssymb,amsmath,amsthm,mathrsfs,array,graphicx,bbm,enumerate,wasysym,dsfont}
\usepackage[dvipsnames]{xcolor}
\usepackage[all]{xy}
\usepackage{bbm}
\usepackage{fullpage}
\usepackage{float}
\usepackage{verbatim}
\usepackage{hyperref}

\newtheorem{thm}{Theorem}
\newtheorem{lemma}[thm]{Lemma}
\newtheorem{rem}[thm]{Remark}
\newtheorem{defn}[thm]{Definition}
\newtheorem{prop}[thm]{Proposition}

\newtheorem{claim}[thm]{Claim}
\newtheorem{con}[thm]{Conjecture}

\numberwithin{equation}{section}

\newcommand{\Z}{\mathbb Z}
\newcommand{\R}{\mathbb R}
\newcommand{\C}{\mathbb C}

\newcommand{\N}{\mathbb N}
\newcommand{\D}{\mathbb D}
\newcommand{\E}{\mathbb E}

\newcommand{\closure}[1]{\overline{#1}}
\renewcommand{\P}{\mathbb P}
\renewcommand{\1}{\mathbf 1}
\newcommand{\A}{\mathds A}

\renewcommand{\L}{\mathcal L}
\newcommand{\F}{\mathcal F}

\renewcommand{\epsilon}{\varepsilon}

\newcommand{\eps}{\epsilon}	

\newcommand{\Int}{\operatorname{Int}}
\newcommand{\sgn}{\operatorname{sign}}

\newcommand{\titus}[1]{\textcolor{ForestGreen}{#1}}

\setlength{\marginparwidth}{50pt}

\begin{document}
\title{Excursion decomposition of the 2D continuum GFF}
\date{ }
\author{Juhan Aru \and Titus Lupu \and Avelio Sep\'ulveda}

\begin{abstract}
{In this note we show that the 2D continuum Gaussian free field (GFF) admits an excursion decomposition that is on the one hand similar to the classical excursion decomposition of the Brownian motion, and on the other hand can be seen as an FK representation of the continuum GFF.} In particular, 2D continuum GFF can be written as an infinite sum of disjoint positive and negative sign excursions, which are given by Minkowski content measures of clusters of a critical 2D Brownian loop soup with i.i.d. signs. Although the 2D continuum GFF is not even a signed measure, we show that the decomposition to positive and negative parts is unique under natural conditions.

\end{abstract}

\subjclass[2010]{60G15; 60G60; 60J65; 60J67; 81T40} 
\keywords{conformal loop ensemble; Gaussian free field; isomorphism theorems; local set; loop-soup; metric graph; Schramm-Loewner evolution}

\maketitle

\section{Introduction}
The 2D continuum Gaussian free field (GFF) is a universal model of a continuum height function and has become a central object in the study of conformally invariant continuum random geometry.  The main reason for this is its strong connections with other objects like for example Schramm-Loewner Evolution, Brownian loop soup and Liouville quantum gravity measures (see e.g. overviews \cite {GwyHolSun, PowWer,BerPow}) and several known or conjectured convergence results towards the Gaussian free field \cite{NS, Kenyon_GFF, RiderVirag,BLR}.

In this note, we explain how to prove a decomposition of the 2D continuum Gaussian free field into an (infinite) sum of signed measures with disjoint supports. This decomposition is unique under natural conditions and can be obtained as a scaling limit of an honest excursion decomposition of the metric graph GFF. Thus our result says that there is a natural decomposition of the GFF into negative and positive parts, despite the fact that the field is not pointwise defined and not even a signed measure. On the one hand, the obtained decomposition shares many properties with the classical excursion decomposition of Brownian motion \cite{Ito} (but also exhibits some new surprising ones). {On the other hand, our decomposition can be also seen as an FK representation of the continuum GFF.}

We work in an open bounded simply-connected domain
$D\subset \C$, and we consider $\Phi$ a zero boundary Gaussian free field
on $D$.
To fix a normalization, we consider the GFF as the field coming from the following
functional integral
$$\exp\Big(-\dfrac{1}{2}\int_{D}\Vert\nabla\varphi\Vert^{2}\Big)\mathcal{D}\varphi.$$
More precisely, $\Phi$ is the centred Gaussian process with covariance  given by the Dirichlet Green's function $G_{D}(z,w)$ function
with the following divergence on the diagonal
$$G_{D}(z,w)\sim \dfrac{1}{2\pi}\log \vert z-w\vert^{-1}.$$
With this normalization, the value of the height gap (used later and introduced in \cite{SchSh}) is
$2\lambda = \sqrt{\pi/2}$.

The main contribution of this paper comes in three theorems: first we state the existence and uniqueness of an excursion decomposition, second we list properties of this decomposition, that mirror strongly those of the excursion decomposition of the one dimensional Brownian motion and make connections with the 2D critical Brownian loop soup. Finally, we show that the naturally defined excursion decomposition of the \textit{metric} GFF converges to the excursion decomposition of the continuum GFF. {Further contributions are Proposition \ref{prop:GFFspin}, which describes the continuum GFF as a rescaled limit of a random field spin model and explains the FK-representation point of view for the decomposition, Conjecture \ref{conj:dgff} that predicts what should be the continuum limit of the excursion decomposition of the \textit{discrete} GFF and Proposition \ref{prop:perco} that obtains uniform continuity of crossing probabilities of annuli by sign clusters of the metric graph GFF.} We make use of known couplings between GFF, CLE$_4$ and Brownian loop soup \cite{SchSh, SheffieldWerner2012CLE, ASW, QW2018, ALS1, ALS2} and build on techniques introduced in \cite{SchSh2, ASW, ALS1, ALS2}; the most technical part of the paper is the proof of uniqueness. 

The existence of the excursion decomposition is given in the following theorem. 
\begin{thm}[Excursion decomposition of the 2D Gaussian free field]\label{thm:excdcmp}
Let $\Phi$ be a zero boundary GFF in $D$. There exists a unique collection of positive measures $(\nu_k)_{k \geq 1}$ with supports $(C_k)_{k \geq 1}$, and a collection of signs $(\sigma_k)_{k \geq 1}$, such that the following conditions hold:
\begin{enumerate}
    \item We can write 
    \begin{equation}\label{Eq decomp 1}
    \Phi = \lim_{N\to \infty}\sum_{k = 1}^N \sigma_k \nu_k,
    \end{equation}
    where the sum is ordered by decreasing size of the diameter of $C_k$. The sum converges almost surely in all the Sobolev spaces $H^{-1-\varepsilon}(D)$ (i.e. for the Sobolev norms) for $\varepsilon>0$.
    \item The decomposition satisfies the following Markov property. For any smooth simple path $\gamma \subset \closure{D}$, starting from the boundary, let $\gamma^{exc}$ denote the closure of the union of all sets $C_k$ that intersect $\gamma$. We can write  almost surely $
    \Phi = \Phi^{\gamma^{exc}} + \Phi_{\gamma^{exc}},$
    with
    \begin{align*}
        \Phi_{\gamma^{exc}} = \sum_{k: C_k\cap \gamma \neq \emptyset} \sigma_k \nu_k,
    \end{align*}
    where the sum is again ordered by decreasing size of diameter of $C_k$ and converges almost surely in all the Sobolev spaces $H^{-1-\varepsilon}(D)$,
    for $\varepsilon>0$. Further, conditionally on $\gamma^{exc}$, the field $\Phi^{\gamma^{exc}}$ is independent of $\Phi_{\gamma^{exc}}$ and has the law of a zero boundary GFF in the domain $D\backslash \gamma^{exc}$.
    \item The collection $(C_k)_{k \geq 1}$ is pairwise disjoint, locally finite\footnote{Locally finite means that for any $\epsilon>0$ there are finitely many $C_k$ with diameter bigger than $\epsilon$.}, and further each $C_k$ is connected.
\end{enumerate}
We call $(C_k)_{k \geq 1}$ the (sign) excursion clusters, $(\nu_k)_{k \geq 1}$ the sign excursions and the triplet $(C_k, \sigma_k, \nu_k)_{k \geq 1}$ the excursion decomposition of $\Phi$.
\end{thm}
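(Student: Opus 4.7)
The plan is to prove existence by explicit construction from the critical Brownian loop soup coupling, to derive the Markov property via thin local set theory, and to tackle uniqueness by reducing to the metric graph GFF setting of the third main theorem.

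\textbf{Existence.} Let $\L^{1/2}$ be a Brownian loop soup of intensity $1/2$ in $D$ and let $(\mathcal{C}_k)_{k\geq 1}$ be its clusters, enumerated by decreasing diameter. By \cite{SheffieldWerner2012CLE, QW2018, ALS1, ALS2}, the outer boundaries of these clusters form a $\text{CLE}_4$, each cluster $\mathcal{C}_k$ admits a Minkowski content measure $\mu_k$ in the appropriate gauge, and if $\sigma_k \in \{\pm 1\}$ are i.i.d.\ uniform and independent of $\L^{1/2}$, then $\sum_k \sigma_k \mu_k$ converges almost surely in $H^{-1-\epsilon}(D)$ to a zero-boundary GFF $\Phi$. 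Setting $C_k := \mathcal{C}_k$ and $\nu_k := \mu_k$ yields a decomposition satisfying (1); item (3) follows from the known local finiteness and connectedness of BLS clusters and the natural bijection with $\text{CLE}_4$ loops.

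\textbf{Markov property.} For a smooth simple path $\gamma$ from $\partial D$, the set $\gamma^{exc}$ is a thin local set for $\Phi$ in the sense of \cite{ASW, ALS1}: measurability with respect to an appropriate $\sigma$-algebra follows from the loop soup coupling, and the local-set property can be checked by exploring $\gamma$ together with the clusters it meets in an increasing fashion, analogously to \cite{ASW, ALS2}. Thin local set theory then identifies the conditional law of $\Phi - \Phi_{\gamma^{exc}}$ given $\gamma^{exc}$ with that of a zero-boundary GFF on $D\setminus \gamma^{exc}$, giving (2). The convergence of the partial sum $\sum_{k:\,C_k\cap\gamma\neq\emptyset}\sigma_k\nu_k$ in $H^{-1-\epsilon}(D)$ is again a consequence of the Minkowski content construction applied to the subfamily of clusters touching $\gamma$.

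\textbf{Uniqueness.} This is expected to be the main obstacle. Suppose $(C_k', \sigma_k', \nu_k')$ is another decomposition satisfying (1)--(3). The strategy is two-fold. First, applying (2) to a countable dense family of smooth paths $\gamma$ severely constrains the partition $\{C_k'\}$: any merging or splitting of BLS clusters into the $C_k'$ would contradict the Markov decompositions obtained for suitable $\gamma$, together with the almost-sure uniqueness of first passage sets from \cite{ALS2}. Second, we plan to use the excursion decomposition of the metric graph GFF of the third main theorem, for which uniqueness is immediate (the metric GFF is a continuous function and its sign components are intrinsic). Combining the Markov property with convergence of the metric graph decomposition to the continuum one should identify any candidate $(C_k', \sigma_k', \nu_k')$ with the BLS-based decomposition. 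The most delicate point is ruling out exotic partitions whose supports differ from BLS clusters but remain ``invisible'' to any countable test family of paths; we expect to handle this via a measurability argument combined with the characterization of BLS clusters as the unique maximal connected local sets carrying a signed Minkowski content compatible with the Markov property.
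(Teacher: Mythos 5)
Your existence step asserts, by citation, precisely the hard part of item (1): that $\sum_k \sigma_k\mu_k$ converges in $H^{-1-\eps}(D)$ to a zero-boundary GFF. None of the cited works contains this statement; what they give you is the coupling of CLE$_4$/BLS cluster boundaries with the GFF, the identification of a \emph{single} cluster (given its boundary) with a first passage set, and the Minkowski content representation of one FPS measure. The infinite signed sum is delicate because the total variation $\sum_k\nu_k$ diverges on every open set, so convergence relies entirely on sign cancellation. The paper handles this by constructing the decomposition from the GFF side (iterating $\A_{-2\lambda,2\lambda}$ and then $\A_{-2\lambda}$ inside each loop), truncating at diameter $\eps$, and controlling the contribution of all small clusters at once via an $H^{-1}$ estimate for a GFF on a union of small components (Lemma \ref{lem:tail}) together with martingale convergence. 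Your proposal needs some substitute for this tail control; without it the convergence claim is unsupported. The Markov property sketch is fine in spirit and close to the paper's argument.

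The uniqueness strategy has a more structural problem. Relying on the metric graph convergence theorem is circular relative to the paper's logic: that convergence result is \emph{proved using} uniqueness of the continuum decomposition (the paper reduces convergence in probability to convergence in law precisely because the limit is a measurable, uniquely characterized functional of $\Phi$). Moreover, even granting the convergence, it identifies the limit of the discrete decomposition with the BLS-based one; it says nothing about an arbitrary competitor $(\hat C_k,\hat\sigma_k,\hat\nu_k)$ satisfying (1)--(3), which has no a priori relation to any discrete approximation. The actual mechanism in the paper is a chain of local-set arguments: a stopped exploration $\hat\gamma^{exc}(\tau)$ of the competitor's clusters along a path is shown to be a local set; SLE$_4$ level lines of $\A_{-2\lambda,2\lambda}$ cannot cross its boundary (Claim 17 / Lemma 16 of \cite{ASW}), forcing $C_k\subseteq\hat C_{\hat k(k)}$; signs are matched by testing against functions supported near a cluster; injectivity of $k\mapsto\hat k(k)$ comes from a second-moment identity exploiting independence of the signs; and equality of clusters follows because the discrepancy would be a thin local set of Minkowski dimension $<2$ with zero boundary values, hence empty. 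Your ``dense family of paths plus measurability'' idea does not by itself rule out the exotic partitions you mention (compare the Brownian-motion-in-$L^2$ example in the paper's introduction, where countably many path-invisible refinements exist); some quantitative input of the above type is needed.
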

Further properties of the excursion decomposition are listed in the following theorem.
\begin{thm}[Properties of the excursion decomposition]\label{thm:prop}
Let $\Phi$ be a zero boundary GFF in $D$ and $(( C_k,\nu_k, \sigma_k))_{k \geq 1}$ respectively the excursion clusters, the measure and their signs in the excursion decomposition of Theorem \ref{thm:excdcmp}. Then, the following properties hold:
\begin{enumerate}
    \item The excursion decomposition $(( C_k,\nu_k, \sigma_k))_{k \geq 1}$ is measurable w.r.t. $\Phi$.
    \item In the joint law of $(( C_k,\nu_k, \sigma_k))_{k \geq 1}$, the signs $(\sigma_k)_{k \geq 1}$ are independent of the rest and have the law of i.i.d. Rademacher random variables.
    \item For all $k \geq 1$, the measures $\nu_k$ are given by Minkowski content measure of $C_k$ defined by
$$\nu_k(f) := \lim_{r \to 0}\frac{1}{2}\vert \log r\vert^{1/2}\int_Df(z)1_{d(z, C_k)\leq r}dz,$$ for all $f \in C(D)$. In particular for all $k \geq 1$, $\nu_k$ is determined by $C_k$.
\item The law of $(C_k)_{k \geq 1}$ equals to that of 
(topological closures of) clusters of a 2D Brownian loop soup at the critical intensity $\alpha = 1/2$ in $D$, ordered by decreasing diameter.
{Further, the collection of outer boundaries of the outermost clusters $(C_k)_{k \geq 1}$ has the law of $CLE_4$.}
\end{enumerate}
\end{thm}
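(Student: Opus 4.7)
The plan is to construct a candidate triple $(\tilde{C}_k, \tilde{\nu}_k, \tilde{\sigma}_k)_{k \geq 1}$ from a critical Brownian loop soup, check that it produces a zero-boundary GFF satisfying the defining hypotheses of Theorem~\ref{thm:excdcmp}, and then invoke the uniqueness part of Theorem~\ref{thm:excdcmp} to identify it with $(C_k, \nu_k, \sigma_k)_{k \geq 1}$. Properties (1)--(4) of Theorem~\ref{thm:prop} then follow by inspection of how the candidate was built.

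\textbf{Construction of the candidate.} Sample a Brownian loop soup $\L$ in $D$ at critical intensity $\alpha = 1/2$, let $(\tilde{C}_k)_{k \geq 1}$ be the topological closures of the clusters of $\L$ ordered by decreasing diameter, let $\tilde{\nu}_k$ be the Minkowski content measure of $\tilde{C}_k$ (which exists by \cite{ALS2}), and let $(\tilde{\sigma}_k)_{k \geq 1}$ be i.i.d.\ Rademacher signs, independent of $\L$. The coupling results of \cite{ALS1, ALS2, QW2018} then imply that $\sum_{k \geq 1} \tilde{\sigma}_k \tilde{\nu}_k$ converges in $H^{-1-\varepsilon}(D)$ to a zero-boundary GFF $\tilde{\Phi}$. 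By re-coupling, I may assume $\tilde{\Phi} = \Phi$ almost surely.

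\textbf{Verification of the defining properties.} I then check that $(\tilde{C}_k, \tilde{\nu}_k, \tilde{\sigma}_k)_{k \geq 1}$ satisfies conditions (1)--(3) of Theorem~\ref{thm:excdcmp}. Condition (1) is built into the construction. Condition (3) --- pairwise disjointness, local finiteness, and connectedness of the $\tilde{C}_k$ --- is known for topological closures of critical BLS clusters; see \cite{SheffieldWerner2012CLE, ALS1}. The substantive task is condition (2): for a smooth simple boundary-to-interior path $\gamma$, the set $\tilde{\gamma}^{exc} := \overline{\bigcup_{k : \tilde{C}_k \cap \gamma \neq \emptyset} \tilde{C}_k}$ should be a local set of $\Phi$, the partial sum $\sum_{k : \tilde{C}_k \cap \gamma \neq \emptyset} \tilde{\sigma}_k \tilde{\nu}_k$ should converge in $H^{-1-\varepsilon}(D)$, and the BLS restriction property together with the BLS--GFF coupling applied in $D \setminus \tilde{\gamma}^{exc}$ should yield that $\Phi - \sum_{k : \tilde{C}_k \cap \gamma \neq \emptyset} \tilde{\sigma}_k \tilde{\nu}_k$ is a zero-boundary GFF on $D \setminus \tilde{\gamma}^{exc}$, independent of $\tilde{\gamma}^{exc}$ given $\tilde{\gamma}^{exc}$.

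\textbf{Conclusion and main obstacle.} Once the verification is complete, the uniqueness part of Theorem~\ref{thm:excdcmp} gives $(\tilde{C}_k, \tilde{\nu}_k, \tilde{\sigma}_k) = (C_k, \nu_k, \sigma_k)$ almost surely, from which I can read off the four properties: measurability (1) follows from the uniqueness itself; the Rademacher-independence statement (2) is built into the candidate; the Minkowski content identity (3) is the definition of $\tilde{\nu}_k$; and the identification of $(C_k)_{k \geq 1}$ with critical BLS clusters together with the $\mathrm{CLE}_4$ description of the outer boundaries of outermost clusters follows from the construction and \cite{SheffieldWerner2012CLE}. I expect the main obstacle to be the Markov property in Condition (2): one must justify that the clusters hit by $\gamma$ are measurable with respect to $\L$ restricted to an arbitrarily small neighbourhood of $\gamma$, verify the local-set/conditional-independence property for $\tilde{\gamma}^{exc}$, and control the convergence of the "complementary" series in the domain $D \setminus \tilde{\gamma}^{exc}$. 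This step relies on uniform BLS crossing estimates such as Proposition~\ref{prop:perco}, and on a careful treatment of the behaviour of $\L$ near $\gamma$.
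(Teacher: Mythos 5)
Your route inverts the paper's: the paper builds the decomposition on the GFF side (iterated $\A_{-2\lambda,2\lambda}$ with its i.i.d.\ $\pm2\lambda$ labels, then $\A_{-2\lambda}$ inside each loop) and then simply reads off all four properties from that construction --- measurability from the measurability of TVS/FPS, property (2) from Theorem \ref{thm:CLE4_coupling}(4), property (3) from Theorem \ref{Min}, and property (4) from Theorem \ref{clebls} plus Proposition \ref{thm:BLSGFF2}. You instead build a candidate on the loop-soup side and want to invoke the uniqueness of Theorem \ref{thm:excdcmp} to identify it. That is a legitimate strategy in principle, but as written it has a genuine gap exactly where you flag ``the main obstacle'': the Markov property (condition (2) of Theorem \ref{thm:excdcmp}) for the BLS-built candidate is never actually verified, and it is the only nontrivial thing to verify. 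The BLS restriction property does not by itself tell you that $\tilde\gamma^{exc}$ is a local set of $\Phi$ with harmonic part $\sum_{k:\tilde C_k\cap\gamma\neq\emptyset}\tilde\sigma_k\tilde\nu_k$; the only known way to get that local-set structure is to pass through the identification of outer boundaries with CLE$_4$ (Proposition \ref{prop:Markov CLE}) and of clusters with first passage sets, together with the tail control of Lemma \ref{lem:tail} to sum the small excursions --- i.e.\ to redo the paper's Section \ref{s:existence} construction. So your ``verification'' step secretly contains the whole existence proof, and the uniqueness detour buys nothing: once you have the CLE$_4$+FPS description in hand, properties (1)--(4) follow directly, which is what the paper does.

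Two further cautions. First, your appeal to Proposition \ref{prop:perco} is circular in the paper's logical order: that proposition is proved using the convergence Theorem \ref{thm:conv}, which sits downstream of Theorems \ref{thm:excdcmp} and \ref{thm:prop}; if you need crossing estimates they must come from the loop soup or from CLE$_4$ local finiteness directly. Second, the assertion that $\sum_k\tilde\sigma_k\tilde\nu_k$ converges in $H^{-1-\varepsilon}(D)$ to a zero-boundary GFF is not an off-the-shelf consequence of \cite{ALS1,ALS2,QW2018}; those references supply the ingredients (conditional law of a cluster given its boundary, Minkowski content, CLE$_4$ coupling), but assembling them into the convergent signed sum is precisely the content of the existence proof, so it cannot be cited as known when the goal is to re-derive the decomposition. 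Also, ``measurability follows from the uniqueness itself'' needs the standard resampling argument (two conditionally independent copies given $\Phi$ must coincide a.s.) or, more simply, the observation that the constructed decomposition is measurable because TVS and FPS are measurable functions of the GFF; as stated it is too quick.
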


Further, one can justify the name excursion decomposition by showing a convergence result from the well defined excursion decomposition on the metric graph GFF. See Section \ref{s:convergence} for the exact set-up and {see Conjecture \ref{conj:dgff} for the case of the discrete GFF, whose excursion decomposition, we believe, converges to a different continuum decomposition where individual `excursions' are still positive and negative measures, but signs are no longer independent, they rather alternate with nesting.}

\begin{thm}[Convergence of the excursion decomposition]\label{thm:conv a}
Let $\Phi$ be a zero boundary GFF on $D$ and $\tilde \phi_n$ be a sequence of zero boundary metric graph GFFs on $\widetilde D_n$ that are coupled with a GFF $\Phi$ such that a.s. $\tilde \phi_n \to \Phi$ in $H^{-\epsilon}(D)$, for some $\epsilon>0$.
Further, take $(\widetilde C_k^n,  \tilde\nu_k^n,\tilde\sigma_k^n)_{k \geq 1}$ the excursion decomposition of $\tilde \phi_n$. 

We have that for every $k>0$,  $ \widetilde C_k^{(n)}\to C_k$, 
$\tilde\nu_k^{(n)}\to \nu_k$ and $\tilde\sigma_k^{(n)}\to \sigma$ as $n\to \infty$, where the convergence is in probability and in the Hausdorff topology for the first component, and in the weak topology of measures for the second component.
\end{thm}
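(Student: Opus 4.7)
My plan is to leverage the uniqueness statement of Theorem \ref{thm:excdcmp}. The strategy is to show that any joint subsequential limit of $(\widetilde C_k^n, \tilde\nu_k^n, \tilde\sigma_k^n)_{k\ge 1}$ is, together with $\Phi$, an excursion decomposition of $\Phi$; the uniqueness statement then converts subsequential convergence into convergence in probability.

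The first step would be tightness and identification of cluster limits. For every fixed $\epsilon>0$ the collection of clusters of $\tilde\phi_n$ of diameter at least $\epsilon$ is tight in the Hausdorff topology thanks to the uniform annulus crossing estimates of Proposition \ref{prop:perco} combined with an Aizenman--Burchard type argument. Any Hausdorff subsequential limit is a locally finite collection of disjoint, compact, connected sets; by the already established convergence of sign clusters of the metric graph GFF to clusters of the critical Brownian loop soup (in the spirit of \cite{ALS1, ALS2}), the limiting configuration is distributed as the collection $(C_k)_{k\ge 1}$ from Theorem \ref{thm:prop}(4).

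The second step would be to upgrade cluster convergence to joint convergence of the measures and the signs. The discrete Minkowski content measures $\tilde\nu_k^n$ converge weakly to the continuum Minkowski content of the limiting cluster via a uniform second-moment estimate, paralleling the construction of $\nu_k$ itself. The sign $\tilde\sigma_k^n$ is recovered by pairing $\tilde\phi_n$ with a smooth mollifier supported in a small neighborhood of $\widetilde C_k^n$ that is disjoint from all other clusters of diameter $\ge\epsilon$; the almost sure convergence $\tilde\phi_n\to\Phi$ in $H^{-\epsilon}$ then identifies the limiting sign with $\sigma_k$, using that $\sigma_k\nu_k$ is an isolated summand in the decomposition of $\Phi$ at this scale.

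The last step would be to verify the three axioms of Theorem \ref{thm:excdcmp} for any subsequential limit. Property (3) is immediate from Hausdorff limits. Property (2), the Markov property along a smooth path, holds exactly on the metric graph and descends to the limit because almost surely $\gamma$ meets only finitely many clusters of large diameter and misses the small ones. I expect the main obstacle to be property (1), the convergence $\sum_{k\le N}\tilde\sigma_k^n\tilde\nu_k^n\to\Phi$ in $H^{-1-\epsilon}$: one needs a uniform-in-$n$ tail estimate
\[
    \E\Bigl\|\sum_{k>N}\tilde\sigma_k^n\tilde\nu_k^n\Bigr\|_{H^{-1-\epsilon}}^2 \longrightarrow 0 \quad \text{as } N\to\infty,
\]
obtained by exploiting the conditional independence of the signs given the clusters together with a bound on the expected $H^{-1-\epsilon}$ mass carried by clusters of small diameter; the latter should follow by transferring the corresponding continuum estimate used to establish \eqref{Eq decomp 1} to the metric graph setting. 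Once (1)--(3) are established for the subsequential limit, uniqueness in Theorem \ref{thm:excdcmp} forces it to coincide with $(C_k,\nu_k,\sigma_k)_{k\ge 1}$, and convergence in probability follows.
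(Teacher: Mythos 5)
Your overall skeleton (tightness, subsequential limits via Skorokhod, identification of the limit, then uniqueness to pass to convergence in probability) matches the paper's, and the final step --- combining uniqueness with measurability of the limiting decomposition w.r.t.\ $\Phi$ to upgrade convergence in law to convergence in probability --- is exactly how the paper opens its proof (via Lemma 4.10 of \cite{ALS2}). But the identification step has genuine gaps. First, a circularity: Proposition \ref{prop:perco} is itself deduced \emph{from} Theorem \ref{thm:conv}, so it cannot be used to start the argument; fortunately Hausdorff tightness of the clusters is automatic (closed subsets of a compact set form a compact space in the Hausdorff metric), and it is tightness of the \emph{measures} that needs an input --- the paper gets it from the exact $L^2$ identity of Lemma \ref{lem:L2L4}. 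Second, and more seriously, your plan to verify the three axioms of Theorem \ref{thm:excdcmp} for an abstract subsequential limit runs into obstacles the paper deliberately sidesteps by identifying the limit \emph{directly} with the CLE$_4$ plus first-passage-set construction (it explicitly does \emph{not} use uniqueness to identify the limit): (i) pairwise disjointness and local finiteness are not ``immediate from Hausdorff limits'' --- disjoint sets can converge to overlapping ones, and diameters need not tend to zero uniformly in $n$; (ii) the Markov property along $\gamma$ does not simply ``descend'': one must rule out tangencies, i.e.\ show that the discrete clusters meeting $\gamma$ converge precisely to the limiting clusters meeting $\gamma$. In the paper these issues disappear only \emph{after} the limit clusters are identified, via the local-set property of the closed union of discrete clusters and the uniqueness of the FPS, as CLE$_4$ loops filled in by $\A_{-2\lambda}$ (Claim \ref{c:conv_FPS}); the cluster convergence you cite as ``already established'' in fact requires this extra work (excluding filament-like clusters, cf.\ \cite{LupuConvCLE} and Lemma 4.13 of \cite{ALS2}).

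The remaining gap concerns the measures. The $\tilde\nu_k^{(n)}$ are not discrete Minkowski contents but $|\tilde\phi_n|$ restricted to the cluster, and the danger is that a subsequential limit $\mathring\nu_{\mathbf k}$ strictly dominates the continuum Minkowski content $\nu_{k(\mathbf k)}$, the surplus being compensated in the limit of the field by infinitely many microscopic clusters of the opposite sign. A ``uniform second-moment estimate paralleling the construction of $\nu_k$'' does not address this; the paper rules it out with a conservation-of-second-moment argument: the sign-symmetry identity
\begin{equation*}
\E\left[(\tilde \phi_n, f)^2\right] = \sum_{k} \E\left[(\tilde \nu_k^{(n)}, f)^2 \right]+ \E \left[(\1_{\widetilde D_n \setminus \cup_{k}\widetilde C_k^{(n)}}\tilde \phi_n, f)^2\right]
\end{equation*}
of Lemma \ref{lem:L2L4} passes to the limit and is matched term by term against the same identity for the continuum decomposition, which together with the one-sided bound $\mathring\nu_{\mathbf k}\ge\nu_{k(\mathbf k)}$ forces equality. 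Your proposed tail bound for property (1) is in the same spirit (orthogonality from independent signs), but you would still need this no-extra-mass step to identify the individual limiting measures before any appeal to uniqueness can be made.
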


Let us elaborate on these theorems via some further remarks.
\begin{enumerate}[i.]
    \item  It is known that the 2D continuum Gaussian free field is not a signed measure and in particular it cannot be written as a difference of two sigma-finite positive measures. Thus such a rewriting as a sum of disjoint signed measures is in itself already non-trivial. 
\item Previously a similar decomposition was known for the continuum limit of the magnetization field of the critical 2D Ising model,
in which case it is the continuum analogue of the standard FK representation of the lattice Ising model.
The continuum limit of the magnetization field was constructed in
\cite{CGN}.
The continuum FK decomposition was conjectured in \cite{CamiaNewman09PNAS}, and the proof finalized in \cite{CamiaJiangNewman}.
\footnote{We are grateful to F. Camia for clarifying this history of the Ising decomposition.} 
A crucial input was the detailed understanding of the scaling limits of interfaces and correlation functions of the critical Ising and FK-Ising models
\cite{CDCHKS,CDCH,CHIcorr,ChelkakSmirnov,Smirnov}.
As a difference to the free field case, 
in the Ising setting the renormalised area measures are constructed via a convergence argument from the discrete area measures and in the continuum limit the excursion decomposition is not measurable with respect to the continuum magnetization field.
This failure of measurability comes from the fact that two different continuum FK clusters can touch and there are several ways to split an Ising spin cluster into FK clusters.
    \item {In fact, similar to the Ising case mentioned above, also our excursion decomposition can be seen as a continuum FK representation of the GFF. Indeed, as observed in \cite{LupuWerner16Ising}, the sign clusters of the metric graph GFF $\tilde \phi_n$ can be seen as a certain FK representation for the random-field Ising model of the discrete GFF given by $\operatorname{sign}(\phi_n)$. Proposition \ref{prop:GFFspin} shows that the continuum GFF is a renormalized scaling limit of this model, making the FK-viewpoint more precise. Interestingly, this FK decomposition which is not measurable w.r.t. discrete GFF, becomes measurable in the continuum limit. }
    \item {Related to the previous comment, we believe that the excursion decomposition of the discrete GFF does not converge to our continuum decomposition - see Conjecture \ref{conj:dgff} for a precise statement. This alternative continuum decomposition of the continuum GFF does not satisfy equally nice independence properties, e.g. the signs of the sign excursions are not independent. }
    \item The existence of a decomposition of the 2D continuum GFF into a signed sum of measures (without uniqueness, measurability and an explicit description of the structure of the decomposition) could be also obtained using subsequential convergence results from the metric graph, using results from 
    \cite{LupuConvCLE} but no further SLE theory.
    \item To prove existence and uniqueness of the decomposition we only need to use basic properties of the GFF and its local sets (including CLE$_4$, SLE$_4$), and thus in particular we do not use isomorphism theorems. In fact also the excursion clusters have a writing in terms of only the nested CLE$_4$: see Remark \ref{rem:cle}.
    \item We expect the existence and uniqueness of the decomposition, and all the properties to hold also in non-simply-connected domains. However, it adds some technicalities that we decided not to address in this work.
    \item The convergence of the sum can most likely be improved to $H^{-\eps}$ for all $\eps > 0$.
     \item For the convergence in \eqref{Eq decomp 1}, the compensation induced by the sign is crucial,
and the total variation measure $\Sigma_{i\geq 1}\nu_{i}$ diverges in every open subset of $D$. There is some freedom in the specific order on the clusters
$(C_i)_{i \geq 1}$. However, it is important to fix the order independently of the signs $(\sigma_i)_{i \geq 1}$.  Notice that we do not a priori ask any independence properties of the signs, and obtain them as a corollary.
    \item {One may wonder what would be the minimal assumption to have uniqueness of the decomposition above. However, the answer might not be straightforward: 1) as already mentioned, we expect there to be another natural excursion decomposition that comes from the convergence of the excursion decomposition of the discrete GFF 2) as soon as one works in classes of irregular functions such decompositions are in general not unique without further assumptions. Indeed, even for example the Hahn decomposition of signed measures is unique only up to measure $0$ sets. Or, for a concrete example, consider the case of Brownian motion on $[0,1]$, but seen as a probability measure of $L^2([0,1])$. Now let's look for decompositions of $[0,1]$ into closed connected disjoint sets where Brownian Motion is either non-positive or non-negative. It is easy to see that, unless we invoke some extra conditions - like a certain Markov property after discovering some excursions, or independence of signs of excursions, or possibly some maximality property -, we can in addition to the natural decomposition, where we take the support of each excursion, also find many others. Indeed, we can always first take the natural decomposition, but then further write any of these closed intervals as a countable union of smaller closed intervals, up to a zero measure set. As long as we work in $L^2([0,1])$, the remaining zero measure set can be just forgotten and all the above-listed conditions would be satisfied.}
\item {This theorem can be further tweaked to write the 2D continuum GFF using Poisson point processes of excursions very similarly to the classical writing of the Brownian motion by concatenating a Poisson point process (PPP) of Brownian excursions. Indeed, as shown in \cite{WernerWu2013ExplorCLE}, one can define an infinite measure on the space of loops pinned at a uniform point on the boundary, such that the whole CLE$_4$ can be constructed using a single PPP with this intensity measure - see \cite{WernerWu2013ExplorCLE} Section 4, or \cite{AS2} Section 6.1 for a more detailed explanation. Now, as mentioned, CLE$_4$ gives only the outer boundaries of the outermost excursion clusters, but one can further include the clusters and the sign measures in the above-mentioned intensity measure to obtain an intensity measure for clusters pinned at boundary. This way one obtains a way to sample all the outermost clusters via a PPP; one further iterates in the interiors of each cluster to get the full decomposition.}
\item It would be very interesting to see similar decompositions for other random distributions and indeed, Jego, Lupu and Qian manage to prove similar decompositions for random fields constructed from sub-critical Brownian loop soups \cite{JLQ23b}. Among other things, they also give an alternative proof for existence of the decomposition in the critical case that does not rely local sets of the GFF (nor CLE$_4$, SLE$_4$), but that does not provide uniqueness and measurability.
\end{enumerate}

The rest of this note is structured as follows: we collect definitions of main objects in Section 2; in Section 3 we prove the existence part of Theorem 1 and deduce the properties of Theorem 2. In Section 4 we prove the uniqueness of the decomposition and in Section 5 the convergence. Finally, in Section 6 
we discuss several further aspects: firstly, we prove  uniform continuity of crossing probabilities on metric graphs.
We then explain how to see the continuum GFF as a scaling limit of spin models and why our decomposition can be alterantively seen as a FK representation.
In the same section we also discuss the conjectured scaling limit of the excursion decomposition of the discrete GFF.

\subsection*{Acknowledgement} The authors are thankful two Wendelin Werner for many useful and inspiring discussions at their time in ETH. The research of J.A. is supported by Eccellenza grant 194648 of the Swiss National Science Foundation and he is a member of NCCR Swissmap. The research of A.S is supported by Grant ANID AFB170001, FONDECYT iniciación de investigación N° 11200085 and ERC 101043450 Vortex.

\section{Definitions and preliminaries}

For the convenience of the reader, we collect here the definitions of the 2D continuum Gaussian free field and its local sets, CLE$_4$ and Brownian loop soup. For more information, see e.g. preliminaries of \cite{ALS1, ALS2} or the book \cite{PowWer}. 

The continuum Gaussian free field (GFF) is the generalisation of Brownian motion, replacing the time axis by a $d$-dimensional domain. More precisely, it is defined as follows.
\begin{defn}[Gaussian free field]
Let $D\subseteq \C$ denote a finitely connected domain. The $2$-dimensional zero boundary continuum GFF in $D$ is the centred Gaussian process $(\Phi,f)_{f\in C_c^\infty(\C)}$ whose covariance is given by 
$$\E\left[(\Phi,f)(\Phi,g)\right] = \int\int_{D\times D} f(z)G^{D}(z,w)g(w)\, dz dw;\quad f,g\in C_c^\infty(\C),$$
where $G^{D}$ denotes the zero boundary Green's function for the Laplacian in $D$. 
\end{defn}

For any open set $U$ that is a union of countably many finitely-connected domains, we define the zero boundary GFF on $U$ as a disjoint union of independent zero boundary GFFs in the connected components. The GFF is almost surely in $H^{-\eps}(U)$ for any $\eps > 0$, but we can also consider the GFF as a random distribution on larger domains $U' \supseteq U$, extending it outside of $U$ by zero. The continuum GFF can be essentially characterized by its Markov property \cite{BPR,BPR2,GFFchar} and random sets coupled with the GFF that satisfy a strong Markov property are called local sets. For a more general discussion of local sets and their properties we refer to 
	\cite{Aru, SchSh2,PowWer}.
	
\begin{defn}[Local sets]\label{d: local sets}
Consider a random triple $(\Phi, A,\Phi_A)$, where $\Phi$ is a  GFF in $D$, $A$ is a random closed subset of $\overline D$ and $\Phi_A$ a random distribution that can be viewed as a harmonic function when restricted to
$D \backslash A$.
We say that $A$ is a local set for $\Phi$ if conditionally on $(A,\Phi_A)$, $\Phi^A:=\Phi - \Phi_A$ is a  GFF in $D \backslash A$. 
\end {defn} 

We list here some properties of local sets that we use implicitly or explicitly, see for instance \cite {SchSh2,Aru} for derivations and further properties. 

\begin{lemma}\label{lem:BPLS}  The following properties hold for local sets of the GFF. 
			\begin {enumerate}
			\item Any local set can be coupled in a unique way with a given GFF: Let $(\Phi,A,\Phi_A,\widehat \Phi_A)$ be a coupling, where $(\Phi,A,\Phi_A)$ and $(\Phi,A,\Phi'_A)$ satisfy the conditions of this definition. Then, a.s. $\Phi_A= \Phi'_A$. Thus, being a local set is a property of the coupling $(\Phi,A)$, as  $\Phi_A$ is a measurable function of $(\Phi,A)$. 
			\item If $A$ and $B$ are local sets coupled with the same GFF $\Phi$, and $(A, \Phi_A)$ and $(B, \Phi_B)$ are conditionally independent given $\Phi$, then $A \cup B$ is also a local set coupled with $\Phi$ and the boundary values of $\Phi_{A \cup B}$ agree with those of $\Phi_B$ or $\Phi_A$ at every point of the boundary of $A \cup B$ that is of positive distance of $A$ or $B$ respectively\footnote{We say that $\Phi_{A \cup B}$ agrees with $\Phi_A$ at a point $x\in \partial(A\cup B) \cap \partial A $ if for any sequence of $x_n \notin A\cup B$ converging to $x$, $\Phi_{A \cup B}(x_n)-\Phi_{A} (x_n) \to 0$ as $n\to \infty$.}. Additionally, $B\backslash A$ is a local set of $\Phi^A$ with $(\Phi^A)_{B\backslash A} = \Phi_{B\cup A}-\Phi_{A}$  . 
			\item Let $(\Phi, (A_n)_{n\in \N},(\Phi_{A_n}))_{n\in \N}$ a sequence of conditionally independent local sets coupled with the same GFF $\Phi$. Furthermore, assume that $A_n$ is increasing. Then $A_\infty = \overline{\bigcup_{n\in \N} A_n}$ is a local set. Furthermore, if a.s. for all $n\in \N$, $A_n$ is connected to the boundary, then a.s. $\Phi_{A_n}\to \Phi_{A}$.
		\end{enumerate}
	\end{lemma}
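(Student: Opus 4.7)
\textbf{Proof plan for Lemma \ref{lem:BPLS}.}
The plan is to treat the three claims in order, each following from the defining Markov property of local sets together with standard Gaussian computations. For (1), I would use a characteristic function rigidity argument on the joint coupling $(\Phi, A, \Phi_A, \Phi'_A)$: for any fixed $f \in C_c^\infty(D)$, applying the local set property to each decomposition yields
\[
\E[e^{i(\Phi,f)} \mid A, \Phi_A] = e^{i(\Phi_A,f) - \frac{1}{2} V(f,A)}, \qquad \E[e^{i(\Phi,f)} \mid A, \Phi'_A] = e^{i(\Phi'_A,f) - \frac{1}{2} V(f,A)},
\]
with the \emph{same} Gaussian variance $V(f,A) = \iint f(z) f(w) G^{D\setminus A}(z,w)\,dz\,dw$ in both, since the residual GFF lives on $D\setminus A$ in either case. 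Integrating each identity against bounded functionals of $\Phi_A$ or $\Phi'_A$ and matching them forces $(\Phi_A, f) = (\Phi'_A, f)$ a.s.\ for each $f$; a countable dense family of $f$ in $C_c^\infty(D)$ combined with the harmonicity of $\Phi_A - \Phi'_A$ on $D\setminus A$ then upgrades this to $\Phi_A = \Phi'_A$ a.s.

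For (2), the plan is to construct $\Phi_{A\cup B}$ explicitly. Decompose $\Phi = \Phi_A + \Phi^A$ with $\Phi^A$ a zero-boundary GFF on $D\setminus A$ conditional on $(A,\Phi_A)$. The conditional independence of $(B,\Phi_B)$ from $(A,\Phi_A)$ given $\Phi$ implies that $B\setminus A$, equipped with the natural harmonic function matching $\Phi_B - \Phi_A$ along $\partial B \cap (D\setminus A)$, is a local set for $\Phi^A$ in $D\setminus A$; this already yields the last assertion of (2), and I then set $\Phi_{A\cup B} := \Phi_A + (\Phi^A)_{B\setminus A}$. The local set property of $A\cup B$ is immediate from the two-step decomposition: $\Phi - \Phi_{A\cup B} = \Phi^A - (\Phi^A)_{B\setminus A}$ is, conditionally on $(A\cup B, \Phi_{A\cup B})$, a zero-boundary GFF on $D\setminus(A\cup B)$. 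For the boundary value claim at a point $x \in \partial(A\cup B)$ with $d(x, A) > 0$, I would work on a small ball around $x$ disjoint from $A$: there $\Phi_A$ is smooth, and $(\Phi^A)_{B\setminus A}$ matches $\Phi_B - \Phi_A$ along $\partial B$ from inside, giving the required matching $\Phi_{A\cup B}(x_n) - \Phi_B(x_n) \to 0$ as $x_n \to x$.

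For (3), I plan a martingale argument. Iterating (2) shows each $A_n$ is local and $(\Phi_{A_n}, f)$ is a martingale in the filtration $\sigma(A_n, \Phi_{A_n})$ for each fixed $f$, the increments $\Phi_{A_{n+1}} - \Phi_{A_n}$ being the conditionally centered harmonic parts of the local sets $A_{n+1}\setminus A_n$ inside $\Phi^{A_n}$. The uniform bound $\E[(\Phi_{A_n}, f)^2] \leq \E[(\Phi,f)^2]$ gives $L^2$ martingale convergence to some $\Phi_\infty$, which I would identify with $\Phi_{A_\infty}$ by passing to the limit in the conditional characteristic function identity from (1) to verify the local set property of $(A_\infty, \Phi_\infty)$. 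The connectedness-to-boundary hypothesis is used to ensure that the harmonic extensions $\Phi_{A_n}$ behave well up to $\partial D$ and that no new interior component of $D\setminus A_\infty$ emerges in the limit with ambiguous boundary data.

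The main obstacle I anticipate is in (2): translating the conditional independence hypothesis into the precise statement that $B\setminus A$ is local for $\Phi^A$, and then identifying the boundary values of $\Phi_{A\cup B}$ at points of $\partial(A\cup B)$ at positive distance of one of the two sets. Both steps are standard in the Schramm-Sheffield framework but require careful measure-theoretic bookkeeping, and they are what make the union lemma useful in the rest of the paper.
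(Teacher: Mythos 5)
First, note that the paper does not actually prove Lemma \ref{lem:BPLS}: it is stated as a list of known properties with a pointer to \cite{SchSh2,Aru}, so there is no in-paper argument to compare yours against. Judged on its own merits, your plan for parts (2) and (3) follows the standard route from those references (two-step decomposition $\Phi_{A\cup B}=\Phi_A+(\Phi^A)_{B\setminus A}$ for the union, and an $L^2$-bounded martingale for the increasing limit), and the difficulties you single out there --- upgrading the conditional independence hypothesis to the statement that $B\setminus A$ is local for $\Phi^A$, and the boundary behaviour of the limiting harmonic functions --- are exactly the delicate points.

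Part (1), however, has a genuine gap as written. Your two conditional characteristic function identities live on different sigma-algebras, $\sigma(A,\Phi_A)$ and $\sigma(A,\Phi'_A)$, and you may not integrate the first against bounded functionals of $\Phi'_A$ (nor the second against functionals of $\Phi_A$) unless you already know that each harmonic part is measurable with respect to the other coupling --- which is essentially the statement to be proved. What the identities do yield, after testing against bounded functionals of $A$ alone, is that $(\Phi_A,f)$ and $(\Phi'_A,f)$ have the same conditional law given $A$; equality of conditional laws is strictly weaker than almost sure equality. The standard repair is to prove directly that $\Phi_A$ is a measurable function of the pair $(\Phi,A)$: for $z$ at distance more than $2\delta$ from $A$, decompose $\Phi$ into its harmonic part and its zero-boundary part relative to the complement of the closed $\delta$-neighbourhood $A^{(\delta)}$ of $A$. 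The harmonic part evaluated at $z$ is a deterministic function of $(\Phi,A)$, and, conditionally on $(A,\Phi_A)$, it equals $\Phi_A(z)$ plus a centred Gaussian of variance $G^{D\setminus A}(z,z)-G^{D\setminus A^{(\delta)}}(z,z)$, which tends to $0$ as $\delta\to 0$. Hence $\Phi_A(z)$ is an $L^2$ limit of $\sigma(\Phi,A)$-measurable quantities, which gives both the measurability claim and the almost sure equality $\Phi_A=\Phi'_A$ in one stroke. With this substitution your part (1) is sound.
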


In particular, we will use the existence and uniqueness of the following type of local sets: two-valued local sets introduced in \cite{ASW} and studied in \cite{ALS1}, and first passage sets, introduced in \cite{ALS1,ALS2}. For definitions of thin local sets, bounded type local sets we refer e.g. to \cite{ASW, ALS3}.

\begin{thm}[Two-valued local sets: existence and uniqueness]\label{thm:tvs}
Let $a, b > 0$ be such that $a+b \geq 2\lambda$. Then one can couple a thin\footnote{Thin means that $\Phi_A$ is a.s. equal to a harmonic function everywhere, see \cite{Se}.} bounded type local set $\A_{-a,b}\neq \emptyset$ with a GFF $\Phi$ such that in each connected component $O$ of $D\backslash \A_{-a,b}$ the harmonic function $\Phi_A$ is equal to either $-a$ or $b$. Moreover, the sets $\A_{-a,b}$ are
	\begin{itemize} 
		\item unique in the sense that if $A'$ is another BTLS coupled with the same $\Phi$,  such that a.s. it satisfies the conditions above,	then $A' = \A_{-a, b}$ almost surely;  
		\item measurable functions of the GFF $\Phi$ that they are coupled with;
		\item monotone in the following sense: if $[-a,b] \subset [-a', b']$ with $b+a \ge 2\lambda$,  then almost surely, $\A_{-a,b} \subset \A_{-a', b'}$. 
	\end{itemize}
\end{thm}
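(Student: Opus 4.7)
I would split the argument into three parts: existence via an iterative construction using level lines of the GFF, monotonicity which will be immediate from that construction, and uniqueness via a conditional-independence-union comparison. Measurability will then follow from uniqueness and Lemma \ref{lem:BPLS}(1).

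\textbf{Existence and monotonicity.} The plan is to build $\A_{-a,b}$ as an increasing union of thin local sets obtained by peeling off one level-line layer at a time. The base layer is an $\operatorname{SLE}_4$-type level line of $\Phi$, which is a thin local set separating $D$ into components on which $\Phi_A$ takes values $\pm\lambda$. Inductively, given $A^{(n)}$ and a component $O$ of $D\setminus A^{(n)}$ on which $\Phi_{A^{(n)}}\equiv c_O$ with $c_O\notin\{-a,b\}$, I use the Markov property (Lemma \ref{lem:BPLS}(1)) to get an independent zero-boundary GFF on $O$, and attach inside $O$ another $\operatorname{SLE}_4$ or $\CLE_4$ layer sending $c_O\mapsto c_O\pm 2\lambda$, or more generally an appropriately stopped $\operatorname{SLE}_4(\rho)$ process aimed at the exact targets $-a$ or $b$ (the existence of such a level line with two force points is a consequence of Miller--Sheffield $\operatorname{SLE}_4(\rho)$ theory and is available as soon as $a+b\geq 2\lambda$). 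A Beurling-type estimate shows that this procedure terminates a.s.\ in each component. By Lemma \ref{lem:BPLS}(2)--(3) the limit $\A_{-a,b}:=\overline{\bigcup_n A^{(n)}}$ is a thin local set with boundary values in $\{-a,b\}$. Monotonicity in $[-a,b]$ is built into the construction: enlarging the target interval strictly postpones the stopping rule, so the whole monotone coupling can be realized simultaneously on the same GFF.

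\textbf{Uniqueness and measurability.} Let $A'$ be any thin BTLS of $\Phi$ whose harmonic function takes values in $\{-a,b\}$ on each component. I couple $A'$ and $\A_{-a,b}$ conditionally independently given $\Phi$, so by Lemma \ref{lem:BPLS}(2) the set $A'\cup\A_{-a,b}$ is a local set and $A'\setminus\A_{-a,b}$ is a local set of the GFF $\Phi^{\A_{-a,b}}$ on $D\setminus\A_{-a,b}$. On a component $O$ of $D\setminus\A_{-a,b}$ where $\Phi_{\A_{-a,b}}\equiv c_O\in\{-a,b\}$, the set $A'\cap O$ is a thin BTLS of an independent zero-boundary GFF on $O$, and its harmonic function is constrained to take values in the translated set $\{-a-c_O,\,b-c_O\}$, which contains $0$. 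A thin BTLS of a zero-boundary GFF whose harmonic function is allowed to vanish on a component must be empty in that component (the extremal BTLS bound, essentially saying that the harmonic function of a non-trivial thin BTLS has boundary values with absolute value at least $2\lambda$ away from $\partial D$). Hence $A'\cap O\subseteq\partial O$ for every $O$, giving $A'\subseteq\A_{-a,b}$ almost surely. By symmetry the reverse inclusion holds, so $A'=\A_{-a,b}$, and Lemma \ref{lem:BPLS}(1) then upgrades this to measurability of $\A_{-a,b}$ with respect to $\Phi$.

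\textbf{Main obstacle.} The hard step will be the extremal BTLS bound invoked in the uniqueness argument, i.e.\ the statement that a thin BTLS of a zero-boundary GFF cannot non-trivially produce a component with vanishing boundary value. Proving this matches the height gap $2\lambda$ to the $L^2$-growth of the BTLS harmonic function and rests on the sharp level-line theory of Schramm--Sheffield together with the Miller--Sheffield characterization of $\CLE_4$; this is precisely where the technical weight of \cite{ASW, ALS1} is concentrated, and where I expect a self-contained treatment would require the most work.
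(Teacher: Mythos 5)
First, a remark on scope: the paper does not prove Theorem \ref{thm:tvs} at all --- it is imported verbatim from \cite{ASW} (with thinness from \cite{Se} and further properties from \cite{ALS1}). So the comparison below is against the standard proof in those references, whose structure is also mirrored in the present paper's Section \ref{S:uniqueness}.

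Your existence and monotonicity sketch follows the known route (iterated generalized level lines / $\operatorname{SLE}_4(\rho)$ layers, with the larger interval built by continuing the construction of the smaller one), and is essentially correct as a plan, modulo the fact that termination is usually argued via the random walk that the boundary values perform on the successive layers rather than a Beurling estimate. The genuine gap is in the uniqueness argument, and it sits exactly where you have compressed the hardest step. When you form the conditionally independent union $A'\cup\A_{-a,b}$ and restrict to a component $O$ of $D\setminus\A_{-a,b}$, you assert that the harmonic function of $A'\setminus\A_{-a,b}$ takes values in $\{-a-c_O,\,b-c_O\}$. This is not justified: by Lemma \ref{lem:BPLS}(2) the harmonic function of the union is $\Phi_{A'}+(\Phi^{A'})_{\A_{-a,b}\setminus A'}$, and the second term has no reason to vanish unless you already know $\A_{-a,b}\subseteq A'$. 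On a component of $D\setminus(A'\cup\A_{-a,b})$ whose boundary meets both sets with different labels, the harmonic function of the union is a genuinely non-constant interpolation between two values of $\{-a,b\}$, and your value constraint fails. The actual proof must first establish the minimality inclusion $\A_{-a,b}\subseteq A'$ by a topological level-line argument (the level lines used in the construction cannot enter a complementary component of $A'$ without violating the boundary-value constraints there --- this is Lemma 16 and Claim 17 of \cite{ASW}, the same input used in the proof of Proposition \ref{prop:ss} here); only after that does the union have $\{-a,b\}$-valued harmonic function and the excess $A'\setminus\A_{-a,b}$ becomes tractable.

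Second, even granting the value constraint, the lemma you invoke to kill the excess does not close the argument: the translated value set $\{-a-c_O,\,b-c_O\}$ equals $\{0,a+b\}$ or $\{-(a+b),0\}$, which has width $a+b\geq 2\lambda$, so an ``oscillation at least $2\lambda$'' extremal bound does not produce a contradiction. The correct mechanism is different and softer: the harmonic function of the excess has constant sign, and a \emph{thin} local set has harmonic function of zero expectation (this is the first part of Proposition 4.5 of \cite{ALS1}, used in Lemma \ref{l.no_inside} of this paper), hence it vanishes identically, and a thin local set with identically zero harmonic function connected to the boundary is empty (Lemma 9 of \cite{ASW}). Finally, a small point on measurability: Lemma \ref{lem:BPLS}(1) gives that $\Phi_A$ is determined by $(\Phi,A)$, not that $A$ is determined by $\Phi$; measurability follows instead from uniqueness applied to two conditionally independent copies of the coupling given $\Phi$.
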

It was observed in \cite{ASW} that the Minkowski dimension of all of any two-valued set is a.s. strictly smaller than $2$ (the a.s. Hausdorff dimension was precisely calculated in \cite{SSV}); we will make use of this fact for this for $\A_{-2\lambda, 2\lambda}$. 

Two-valued local sets are of importance for us as the boundaries of sign excursions in our decomposition are given by iterating two-valued local sets $\A_{-2\lambda, 2\lambda}$. The excursion clusters themselves are given by first passage sets.

\begin{defn}[First passage set]\label{Def ES}
		Let $a\in \R$ and $\Phi$ be a GFF in $D$. We define the first passage set of $\Phi$ of level $-a$ as the local set of $\Phi$ such that $\partial D \subseteq \A_{-a}$, with the following properties:
		\begin{enumerate}
			\item Inside each connected component $O$ of $D\backslash \A_{-a}$, the harmonic function $\Phi_{\A_{-a}}\mid_{D\backslash \A_{-a}}$ is equal to $-a$. 		
			\item $\Phi_{\A_{-a}}+a\geq 0$, i.e., for any smooth positive test function $f$ we have 
			$(\Phi_{\A_{-a}}+a,f) \geq 0$, in other words $\nu:=\Phi_{\A_{-a}}+a$ is a positive measure with support $\A_{-a}$. 
		\end{enumerate}	
	\end{defn}
		
 The key result is the following.
	\begin{thm}\label{thm:FPS}[Theorem 4.3 and Proposition 4.5 of \cite{ALS1}, Proposition 5.7 of \cite{ALS2}]\label{Thm::FPS}For all $a\geq 0$,  the first passage set, $\A_{-a}$, of $\Phi$ of level -a exists and satisfies the following properties:
		\begin{enumerate}
			\item Uniqueness: if $A'$ is another local set coupled with $\Phi$ and satisfying Definition \ref{Def ES}, then a.s. $A'=\A_{-a}$.
			\item Measurability: $\A_{-a}$ is a measurable function of $\Phi$.
			\item Monotonicity: If $a\leq a'$, then $\A_{-a}\subseteq \A_{-a'}$
			\item \titus{Local} finiteness: for any $\eps > 0$ there are only finitely many connected components of $D \setminus \A_{-a}$ of diameter larger than $\eps$.
		\end{enumerate}
	\end{thm}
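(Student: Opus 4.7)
My plan is to handle the four properties in order, building existence via monotone limits of two-valued sets, then extracting uniqueness, measurability, and monotonicity from it, and treating local finiteness last as the main obstacle.

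For existence, I would define $\A_{-a}$ as the closure of the increasing union of the two-valued local sets $\A_{-a, n}$ as $n\to\infty$. By the monotonicity clause of Theorem \ref{thm:tvs}, the sequence $(\A_{-a,n})_n$ is nondecreasing and the components of each $\A_{-a,n}$ are thin; conditional independence of successive increments lets us invoke Lemma \ref{lem:BPLS}(3) so that $A_\infty := \overline{\bigcup_n \A_{-a, n}}$ is a local set of $\Phi$ and $\Phi_{\A_{-a,n}}\to \Phi_{A_\infty}$. In any connected component $O$ of $D\setminus A_\infty$, the harmonic function $\Phi_{\A_{-a,n}}$ on $O$ takes boundary values in $\{-a,n\}$; values equal to $n$ along a non-negligible boundary piece would force $\Phi_{A_\infty}$ to blow up, contradicting the $H^{-\eps}$ convergence, so the boundary value is identically $-a$. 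Positivity $\Phi_{A_\infty}+a\ge 0$ is inherited as a monotone limit of the inequalities $\Phi_{\A_{-a,n}}+a\ge 0$ that hold on each component by construction of the two-valued sets.

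For uniqueness, let $A'$ be another candidate satisfying Definition \ref{Def ES}. I would show $\A_{-a,n}\subseteq A'$ for every $n$, whence $\A_{-a}\subseteq A'$; this is obtained by applying Lemma \ref{lem:BPLS}(2) to $A'\cup \A_{-a,n}$ and observing that in each complementary component the boundary value lies in $[-a, n]$, so this union is itself a valid $\A_{-a,n}$-type set and must equal $\A_{-a,n}$ by uniqueness in Theorem \ref{thm:tvs}. The reverse inclusion $A'\subseteq \A_{-a}$ is then forced because on $D\setminus \A_{-a}$ the harmonic function $\Phi_{\A_{-a}}$ is $-a$ and $\Phi^{\A_{-a}}$ is a GFF, so a further local set $A'\setminus \A_{-a}$ satisfying positivity with boundary value $-a$ would have to be empty (its distributional nonnegativity forces it to be thin, and the only thin set with harmonic value $0$ on both sides is trivial). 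Measurability then follows from uniqueness together with the measurability of each $\A_{-a,n}$ from Theorem \ref{thm:tvs}. Monotonicity $\A_{-a}\subseteq \A_{-a'}$ for $a\le a'$ is similar: $\A_{-a}\cup \A_{-a'}$ is a local set by Lemma \ref{lem:BPLS}(2), its harmonic values lie in $\{-a,-a'\}$, and the global positivity $\Phi+a'\ge 0$ pins them to $-a'$, so by uniqueness the union equals $\A_{-a'}$.

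The main obstacle is local finiteness. I would first show that $\A_{-a}$ is itself thin (in the sense that $\Phi_{\A_{-a}}$ is a harmonic function plus no singular part), by bounding the Minkowski dimension of each $\A_{-a,n}$ uniformly in $n$ using the dimension bound for $\A_{-2\lambda,2\lambda}$ cited after Theorem \ref{thm:tvs} and a conformal covering argument to reduce arbitrary $n$ to the base case. Then the key geometric estimate is that, for any $\eps>0$, the probability that a given point $z\in D$ lies in a component of $D\setminus \A_{-a}$ of diameter at least $\eps$ is uniformly controlled. This can be obtained by iterating: inside each outermost component produced at level $n$, the further exploration to level $n+1$ detaches with probability bounded away from zero by a CLE$_4$ one-arm type estimate, and Borel--Cantelli together with a covering argument over a countable dense set of points gives that only finitely many components of diameter at least $\eps$ survive. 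Passing to the limit $n\to\infty$ transfers the conclusion to $\A_{-a}$.
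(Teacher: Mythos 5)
First, a structural point: the paper gives no proof of Theorem \ref{Thm::FPS} at all --- it is imported verbatim from the literature (Theorem 4.3 and Proposition 4.5 of \cite{ALS1} for existence, uniqueness, measurability and monotonicity; Proposition 5.7 of \cite{ALS2} for local finiteness), so there is no in-paper argument to match yours against. Your construction of $\A_{-a}$ as $\overline{\bigcup_n \A_{-a,n}}$ is indeed the route taken in \cite{ALS1}, and the overall shape of your uniqueness argument is right. However, one step there is not correct as written: knowing that the harmonic function of $A'\cup\A_{-a,n}$ takes values in $[-a,n]$ does \emph{not} make that union ``a valid $\A_{-a,n}$-type set'', because the uniqueness clause of Theorem \ref{thm:tvs} only applies to sets whose harmonic function takes exactly the two extreme values $-a$ and $n$ in every component; your own phrasing concedes the values may lie strictly inside the interval. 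The standard repair is to compare $A'$ with $\A_{-a}$ directly: $(A'\cup\A_{-a})\setminus A'$ is a local set of the GFF $\Phi^{A'}$ whose harmonic part is a positive measure with vanishing expectation, hence vanishes (this is precisely the first part of Proposition 4.5 of \cite{ALS1}, which the paper invokes elsewhere, e.g. in Lemma \ref{l.no_inside}). Your parenthetical ``distributional nonnegativity forces it to be thin'' skips the zero-expectation step, which is where the actual content of that argument lies.

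The more serious problem is item (4). Your plan rests on the claim that $\A_{-a}$ is thin, and that is false: by Theorem \ref{Min}, $\Phi_{\A_{-a}}+a$ is a nontrivial positive measure carried by $\A_{-a}$ (its Minkowski content in the gauge $r^{2}\vert\log r\vert^{1/2}$), so $\Phi_{\A_{-a}}$ has a genuine singular part and $\A_{-a}$ has Minkowski dimension $2$. The first passage set is the prototypical \emph{non}-thin local set. Correspondingly, the dimensions of the approximating two-valued sets $\A_{-a,n}$ increase to $2$ as $n\to\infty$, so there is no uniform dimension bound to propagate through your ``conformal covering argument'', and the subsequent one-arm/Borel--Cantelli sketch controls the wrong quantity anyway (every point of $D\setminus\A_{-a}$ lies in \emph{some} complementary component; what must be ruled out is an accumulation of infinitely many components of diameter $\geq\varepsilon$). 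Local finiteness is genuinely the hard part of this theorem and in \cite{ALS2} it requires input beyond the two-valued-set iteration (it comes through the metric-graph approximation and the identification of FPS with Brownian loop-soup clusters). As written, your argument for (4) does not go through.
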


\subsection{Couplings between different objects}

It was shown in \cite{SchSh, SchSh2} that SLE$_4$ can be seen as a contour line of the continuum GFF. Miller \& Sheffield \cite{MS} discovered that also CLE$_4$ can be coupled as with the GFF. In \cite{ASW} this latter coupling was rephrased in the language of two-valued sets - the two-valued set $\A_{-2\lambda,2\lambda}$ has the law of a CLE$_4$ carpet. 

\begin{thm}[Section 4 of \cite{ASW}]\label{thm:CLE4_coupling}
Let $\Phi$ be a GFF in $D$ and $\A_{-2\lambda,2\lambda}$ be its TVS of levels $-2\lambda$ and $2\lambda$. Then $\A_{-2\lambda, 2\lambda}$ has the law of CLE$_4$ carpet. Moreover, it satisfies the following properties:
		\begin{enumerate}
			\item The loops of $\A_{-2\lambda,2\lambda}$ (i.e. the boundaries of the connected components of $D \backslash \A_{-2\lambda, 2\lambda}$) are continuous simple loops. $\A_{-2\lambda, 2\lambda}$ is the closure of the union of all loops.
			\item The collection of loops of $\A_{-2\lambda,2\lambda}$ is locally finite, i.e. for any $\eps>0$  there are only finitely many loops that have diameter bigger than $\eps$.
			\item Almost surely no two loops of $\A_{-2\lambda,2\lambda}$ intersect, nor does any loop intersect the boundary; also almost surely every fixed point is surrounded by some loop.
			\item The conditional law of the labels of the loops of $\A_{-2\lambda,2\lambda}$ given $\A_{-2\lambda,2\lambda}$ is that of i.i.d. random variables taking values $\pm 2\lambda$ with equal probability.
		\end{enumerate}
\end{thm}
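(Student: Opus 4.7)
The plan is to reduce the statement to the uniqueness of the two-valued set $\A_{-2\lambda, 2\lambda}$ already recorded in Theorem \ref{thm:tvs}. Concretely, I will start from the Miller--Sheffield coupling of CLE$_4$ with the GFF \cite{MS}: using a branching SLE$_4(-2)$ exploration one constructs, on the same probability space as a zero boundary GFF $\Phi$, a CLE$_4$ in $D$ together with i.i.d. $\pm 2\lambda$ labels on its loops, such that each loop is a level line of $\Phi$ at height equal to the label of the loop. Denote by $A$ the CLE$_4$ carpet (the closure of the union of all loops) and by $h_A$ the function on $D\setminus A$ that equals the label of the surrounding loop on each complementary component. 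The goal is to verify that $(A,h_A)$ is a bounded type local set of $\Phi$ with $\Phi_A=h_A$; once this is done, the uniqueness part of Theorem \ref{thm:tvs} forces $A=\A_{-2\lambda,2\lambda}$ a.s., and all four properties are then inherited from the known properties of CLE$_4$.

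To carry out the local set verification, the first step is to approximate $A$ from inside by the sets $A_n$ obtained by running the branching SLE$_4(-2)$ exploration up to the $n$-th loop discovered (in some canonical enumeration, e.g.\ by size or by the exploration order). Each $A_n$ is a finite union of closures of SLE$_4$-type curves, each of which is a local set of $\Phi$ with boundary value $\pm 2\lambda$ by the Schramm--Sheffield level-line coupling \cite{SchSh, SchSh2}. By Lemma \ref{lem:BPLS}(2) applied inductively inside the components discovered so far, each $A_n$ is a local set with $\Phi_{A_n}$ piecewise constant equal to $\pm 2\lambda$. By Lemma \ref{lem:BPLS}(3), the increasing limit $A=\overline{\bigcup_n A_n}$ is again a local set, and $\Phi_{A_n}\to\Phi_A$, forcing $\Phi_A=h_A$. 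Since $\Phi_A$ is bounded and $A$ has Minkowski dimension strictly less than $2$ (a known property of CLE$_4$ carpet), $A$ is a thin BTLS.

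With $(A,\Phi_A)$ being a BTLS whose complementary values are in $\{-2\lambda,2\lambda\}$ and with $A\neq \emptyset$, the uniqueness clause in Theorem \ref{thm:tvs} yields $A=\A_{-2\lambda,2\lambda}$ almost surely. Property (1) is then the statement that CLE$_4$ loops are simple Jordan curves whose closure is the carpet \cite{SheffieldWerner2012CLE}; property (2) is local finiteness of CLE$_4$; property (3) is the well-known non-intersection of CLE$_4$ loops and the fact that CLE$_4$ almost surely covers each fixed point; property (4) is exactly the distribution of labels built into the Miller--Sheffield coupling, which by construction is independent of the geometry of the carpet.

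The main technical obstacle is ensuring the strong Markov property across the full, a priori infinite, CLE$_4$ exploration in the form needed to conclude that $(A,h_A)$ is a local set: one must combine the level-line interpretation of the SLE$_4(-2)$ branches with the absolute continuity that allows the boundary values to jump between $-2\lambda$ and $+2\lambda$ across each newly discovered loop, and then pass to the limit using the monotone convergence part of Lemma \ref{lem:BPLS}(3) without losing control of boundary values near loops of small diameter. The rest of the argument, including the derivation of (1)--(4) from CLE$_4$ theory, is bookkeeping.
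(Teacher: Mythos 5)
This theorem is quoted in the paper as a known result (Section 4 of \cite{ASW}, building on the Miller--Sheffield CLE$_4$ coupling) and is not proved there, so there is no internal proof to compare against. Your sketch --- realizing the labelled CLE$_4$ carpet as a thin bounded-type local set with values $\pm 2\lambda$ via the branching SLE$_4(-2)$ exploration and then invoking the uniqueness clause of Theorem \ref{thm:tvs} to identify it with $\A_{-2\lambda,2\lambda}$, with properties (1)--(4) inherited from CLE$_4$ theory and the label construction --- is precisely the route taken in the cited reference, and the technical point you flag (the local-set property of the full exploration tree and the passage to the limit over small loops) is indeed where the real work lies in \cite{ASW}.
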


From the ground-setting work of Sheffield and Werner, we know further that in simply-connected domains CLE$_4$ loops can be described using the critical Brownian loop soup (BLS).
\begin{thm}[Theorem 1.6 in \cite{SheffieldWerner2012CLE}]\label{clebls}
Let $D$ be a simply-connected domain and consider the critical Brownian loop-soup $\mathcal{L}$ in $D$. Then CLE$_4$ loops are exactly the outer boundaries of the outermost clusters of this Brownian loop soup. 	\end{thm}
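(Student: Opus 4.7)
The plan is to follow Sheffield and Werner in three steps: first, invoke a characterization of $CLE_\kappa$ in simply connected domains by conformal invariance together with a conformal renewal/restriction property; second, verify that outer boundaries of outermost clusters of the critical Brownian loop soup satisfy the same characterization; third, identify the parameter as $\kappa=4$ at the critical intensity $\alpha=1/2$.

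For the first step, the characterization says that, for $\kappa \in (8/3,4]$, a random collection of pairwise disjoint simple loops in a simply connected domain $D$ is uniquely determined by (i) conformal invariance of its law under conformal maps between simply connected domains, and (ii) a conformal renewal property: after exploring all loops that intersect the closure of a subdomain $U \subset D$, the loops contained in each remaining connected component have, conditionally, the law of an independent CLE in that component. For the second step, conformal invariance is essentially automatic, since the Brownian loop measure is conformally invariant (Brownian motion is conformally invariant up to time change, and the unrooted loop measure is defined to inherit this), so the BLS, its clusters, and their outer boundaries are too. The renewal property leans on the basic decomposition of a Poisson point process: conditionally on the loops meeting $\overline{U}$, the loops staying outside form an independent Poisson point process on loops in $D \setminus \overline{U}$ at the same intensity. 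Iterating this decomposition together with an exploration of the BLS clusters that touch $\overline{U}$, the remaining loop soup in each connected component $O$ of $D$ minus the explored set is an independent critical loop soup in $O$, whose outermost cluster outlines in $O$ form an independent copy of the same construction, giving (ii). I expect this to be the main obstacle: one has to argue that the explored set is measurable from the soup, that its boundary and the outermost cluster outlines are well-behaved simple loops, and that no outer boundary is ``lost'' in the decomposition, which requires regularity estimates for BLS clusters.

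For the third step, one first uses that $\alpha = 1/2$ is precisely the critical intensity at which macroscopic clusters appear (for $\alpha < 1/2$ all clusters are small almost surely). The identification $\kappa=4$ then follows by matching invariants, for instance by computing the one-arm exponent or the Hausdorff dimension of the cluster outer boundaries and matching with the corresponding quantity for $SLE_\kappa$; alternatively, one can rely on the $CLE_4$--GFF coupling together with isomorphism-type identities relating the GFF to the critical BLS, which pins down $\kappa = 4$ without additional exponent calculations.
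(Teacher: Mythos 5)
This statement is not proved in the paper at all: it is imported verbatim as Theorem~1.6 of Sheffield--Werner \cite{SheffieldWerner2012CLE} and used as a black box, so there is no internal argument to compare yours against. Judged on its own, your sketch does capture the actual two-pronged strategy of that (long and difficult) paper -- a Markovian/conformal-restriction characterization of simple CLEs, followed by a verification that outer boundaries of outermost critical loop-soup clusters satisfy the axioms -- and you are right to flag the exploration/measurability/regularity issues in step two as the main obstacle; each of your three steps is itself a major theorem there, so what you have is a roadmap rather than a proof.

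Two concrete problems in step three. First, the claim that ``$\alpha=1/2$ is precisely the critical intensity at which macroscopic clusters appear (for $\alpha<1/2$ all clusters are small almost surely)'' is backwards: non-trivial clusters (indeed, the whole CLE$_\kappa$ description with $\kappa\in(8/3,4]$) exist for every intensity in $(0,1/2]$, and $1/2$ is instead the threshold \emph{above} which all loops a.s.\ merge into a single dense cluster. Second, in Sheffield--Werner the parameter $\kappa$ is not pinned down by computing one-arm exponents or Hausdorff dimensions; it falls out of the Markovian characterization itself, which produces an SLE$_\kappa$ bubble/pinned-loop measure whose central charge must match the loop-soup intensity via $c=(3\kappa-8)(6-\kappa)/(2\kappa)$, giving $\kappa=4$ at $c=1$ (i.e.\ $\alpha=1/2$ in the normalization of this paper). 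Your alternative suggestion of going through the GFF coupling and isomorphism identities is a legitimate later route (Lupu, Qian--Werner, Aru--Lupu--Sep\'ulveda), but note that the present paper deliberately avoids isomorphism theorems and leans on \cite{SheffieldWerner2012CLE} precisely so as not to need them.
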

This theorem together with Theorem \ref{thm:CLE4_coupling} implies the following Markov property for $\A_{-2\lambda,2\lambda}$
\begin{prop}\label{prop:Markov CLE}Let $\Phi$ be a GFF in a simply connected domain $D$ and $\gamma:[0,1]\mapsto \overline D$ be a simple continuous curve such that $\gamma(0)\in \partial D$ and $\gamma((0,1))\subseteq D$. Define $\gamma^{ext}$ the closure of the union of all loops of a $\A_{-2\lambda,2\lambda}$ that intersect $\gamma$. We have that $\gamma^{ext}$ is a BTLS of $\Phi$, where $\Phi_{\gamma^{ext}}$ can be characterised as follows. Take $\mathcal I$ the union of the interior all loops $\ell$ of $\A_{-2\lambda,2\lambda}$ that intersect $\gamma$, then $\Phi_{\gamma^{ext}}(z)=\pm 2\lambda$ for any $z\in \mathcal I$ and $\Phi_{\gamma^{ext}}(z)=0$ for all $z\in D\backslash \overline{\mathcal I}$.
\end{prop}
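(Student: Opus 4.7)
The plan is to prove the proposition by approximating $\gamma^{ext}$ by finite unions of loops of $\A_{-2\lambda,2\lambda}$ and passing to a monotone limit via Lemma \ref{lem:BPLS}.

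First I would establish the following finite version: for any finite subcollection $\ell_{i_1},\dots,\ell_{i_n}$ of loops of $\A_{-2\lambda,2\lambda}$ together with their signs $\sigma_{i_1},\dots,\sigma_{i_n}\in\{\pm 1\}$, the union $A_n:=\bigcup_j \ell_{i_j}$ is a BTLS of $\Phi$ whose harmonic function equals $2\lambda\sigma_{i_j}$ in $\mathrm{int}(\ell_{i_j})$ and $0$ in $D\setminus\overline{\bigcup_j \mathrm{int}(\ell_{i_j})}$. The key input is the Markov property of CLE$_4$: conditional on $A_n$ with its signs, the loops of $\A_{-2\lambda,2\lambda}$ not contained in $A_n$ form a CLE$_4$ in the exterior domain $D\setminus\bigcup_j\overline{\mathrm{int}(\ell_{i_j})}$ with i.i.d.\ Rademacher signs independent of everything else. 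Combined with Theorem \ref{thm:CLE4_coupling}, this implies that conditional on $(A_n,\sigma_{i_1},\dots,\sigma_{i_n})$, the field $\Phi$ restricted to the exterior has the law of a zero-boundary GFF there, while in each $\mathrm{int}(\ell_{i_j})$ it is the shift by $2\lambda\sigma_{i_j}$ of an independent zero-boundary GFF. This is exactly the BTLS structure claimed.

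Next, define $\gamma^{ext,\epsilon}$ to be the union of loops of $\A_{-2\lambda,2\lambda}$ of diameter at least $\epsilon$ that intersect $\gamma$; by Theorem \ref{thm:CLE4_coupling}(2) this is a finite union and hence a BTLS by the first step. As $\epsilon\downarrow 0$ the sets $\gamma^{ext,\epsilon}$ increase to $\gamma^{ext}$, and since each $\gamma^{ext,\epsilon}$ is a measurable function of $\Phi$, the conditional independence required by Lemma \ref{lem:BPLS}(3) is automatic, giving that $\gamma^{ext}$ is a local set. The harmonic functions $\Phi_{\gamma^{ext,\epsilon}}$ are uniformly bounded by $2\lambda$ and differ from the candidate limit $h:=\sum_k 2\lambda\sigma_k\mathbf{1}_{\mathrm{int}(\ell_k)}$ (summing over all loops of $\A_{-2\lambda,2\lambda}$ intersecting $\gamma$) only on interiors of selected loops of diameter less than $\epsilon$, whose total Lebesgue measure vanishes as $\epsilon\downarrow 0$ by monotone convergence. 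Hence $\Phi_{\gamma^{ext,\epsilon}}\to h$ in $L^2(D)$; passing to the limit in the local set Markov property of $\gamma^{ext,\epsilon}$ and using the uniqueness statement of Lemma \ref{lem:BPLS}(1), we identify $\Phi_{\gamma^{ext}}=h$ and conclude that $\gamma^{ext}$ is a BTLS with the claimed harmonic function.

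The main obstacle is the CLE$_4$ Markov property used in the first step: although standard, casting it in the local set language for a finite collection of outermost loops selected by an arbitrary measurable criterion (here, intersecting $\gamma$) requires some care, in particular because the exterior of $A_n$ is multiply connected. The cleanest route is through the Brownian loop soup representation of Theorem \ref{clebls}: conditioning on the outermost BLS clusters whose outer boundaries are $\ell_{i_1},\dots,\ell_{i_n}$, the remaining BLS loops form, by the Poisson restriction property, a BLS in the complement of those clusters, which by Theorem \ref{clebls} yields a CLE$_4$ in the exterior. Combined with the independence of the Rademacher labels in Theorem \ref{thm:CLE4_coupling}(4), this completes the step.
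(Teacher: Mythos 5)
Your high-level route --- the Brownian loop soup restriction property from Theorem \ref{clebls} combined with the i.i.d.\ labels of Theorem \ref{thm:CLE4_coupling} --- is exactly the justification the paper has in mind (it offers no written proof beyond citing those two results). However, your approximation scheme contains a genuine gap. The set $\gamma^{ext,\epsilon}$, the union of the loops intersecting $\gamma$ of diameter at least $\epsilon$, is \emph{not} a local set: the event $\{\gamma^{ext,\epsilon}=a\}$ carries the negative information that no \emph{other} loop of $\A_{-2\lambda,2\lambda}$ intersecting $\gamma$ has diameter at least $\epsilon$, so conditionally on $(\gamma^{ext,\epsilon},\Phi_{\gamma^{ext,\epsilon}})$ the field in the exterior component is a GFF conditioned on a nontrivial event of its own loop structure, not a free GFF. (Sanity check: pick $\epsilon$ so that $\P(\gamma^{ext,\epsilon}=\emptyset)=1/2$; on that event the conditional law of $\Phi$ is the GFF conditioned to have no loop of its $\A_{-2\lambda,2\lambda}$ meeting $\gamma$ with diameter at least $\epsilon$, which is not the GFF law.) For the same reason your opening claim that \emph{any} finite subcollection of CLE$_4$ loops with their signs forms a BTLS is false as stated: the Poisson restriction property lets you condition on \emph{all} clusters meeting a deterministic set, not on a sub-family selected by diameter or by any rule whose complement is informative.

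The repair is to drop the diameter truncation. Either apply the restriction property of the loop soup directly to the full (infinite but locally finite) collection of clusters intersecting $\gamma$, which is what the paper intends and what your final paragraph essentially describes; or, if you want an increasing family to feed into Lemma \ref{lem:BPLS}(3), truncate the \emph{curve} rather than the loops, i.e.\ use $\gamma^{ext}(t)$, the closure of the union of all loops meeting $\gamma([0,t])$ --- each of these is a bona fide local set by the same BLS argument, and this is precisely the family the paper exploits later in Lemma \ref{l:strong_Markov}. Your $L^2$ identification of the limiting harmonic function is fine once the approximating sets are genuinely local.
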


In fact, the relation of Theorem \ref{clebls} can be further strengthened. First in \cite{QW15} the authors show that one can couple the critical Brownian loop soup, CLE$_4$ and the zero boundary GFF on the same probability space such that CLE$_4$ describes the outer boundaries of outermost BLS clusters as above and the Wick square of the GFF equals the renormalised occupation time of the BLS. We will not use this statement directly, however we use a certain strengthening that further identifies the Brownian loop soup clusters given their boundary with first passage sets defined and constructed in \cite{ALS1}.
\begin{prop}[Corollary 5.4 in \cite{ALS2}]\label{thm:BLSGFF2}
			\label{CorClusterLoopSoup}
			Let $D$ be a simply connected domain. 
			Conditionally on the outer boundary $\ell$ of a Brownian loop-soup cluster in $\L^{D}_{1/2}$, the topological closure of the cluster itself is distributed like a first passage set $\A_{-2\lambda}$ inside 
$\operatorname{Int}(L)$, the interior surrounded by 
$\Upsilon$.
		\end{prop}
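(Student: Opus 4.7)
The plan is to establish this identification by working in a common coupling of $\Phi$, the critical Brownian loop soup $\L^D_{1/2}$, and the CLE$_4$ carpet $\A_{-2\lambda,2\lambda}$: by Theorem \ref{clebls} the outer boundaries of outermost BLS clusters are exactly CLE$_4$ loops, and by Theorem \ref{thm:CLE4_coupling} these coincide with the outermost loops of $\A_{-2\lambda,2\lambda}$ coupled with $\Phi$. Fix such an outer loop $\ell$, let $U = \operatorname{Int}(\ell)$, and let $C$ denote the topological closure of the BLS cluster whose outer boundary is $\ell$. Conditionally on $\ell$ and its $\{\pm 2\lambda\}$-label $\sigma$, the restriction $\Phi|_U$ equals the constant $\sigma$ plus an independent zero-boundary GFF $\Phi^U$ in $U$. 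By symmetry we may take $\sigma = -2\lambda$, and it suffices to show that $C \cap \closure{U} = \A_{-2\lambda}(\Phi^U)$ almost surely.

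First I would verify that $C$, viewed as a subset of $\closure U$, is a local set of $\Phi^U$. The natural way to do this is by nested iteration of the two-valued set construction inside $U$: at each generation one adds a new family of CLE$_4$-type loops with i.i.d. $\pm 2\lambda$ labels, and the closure of the union of all iteratively discovered loops that are connected to $\ell$ by a chain of same-sign neighbouring loops coincides with the BLS cluster $C$ via the coupling underlying Theorem \ref{clebls}. This exhibits $C$ as an increasing union of conditionally independent BTLS, so Lemma \ref{lem:BPLS}(2)--(3) gives locality and convergence of the associated harmonic functions to some $\Phi^U_C$.

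Next I would check the two defining properties of a first passage set in Definition \ref{Def ES}. For the harmonic value: any connected component $O$ of $U \setminus C$ is separated from $\ell$ by an inner CLE$_4$-type loop whose label, in the iterative construction above, must be opposite to that of $\ell$ (i.e., $+2\lambda$ relative to $-2\lambda$), since a same-sign loop would have merged into the cluster; therefore $\Phi^U_C \equiv -2\lambda$ on $O$. For the non-negativity $\Phi^U_C + 2\lambda \geq 0$: this is the genuinely delicate input. One uses the BLS occupation-time/Wick-square isomorphism of \cite{QW15} to interpret the renormalised occupation time of the loops forming $C$ as a non-negative measure supported on $C$ whose harmonic completion agrees with $\Phi^U_C + 2\lambda$; alternatively, one can use the metric-graph approximation of Lupu where the positive mass on the discrete cluster is manifestly non-negative and pass to the limit.

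The main obstacle is precisely this second property: the local-set structure of $C$ is essentially bookkeeping of nested CLE$_4$ loops, but the positivity of $\Phi^U_C + 2\lambda$ as a measure cannot be detected at the level of harmonic values and genuinely requires the isomorphism input tying the BLS occupation times to the Wick square of the GFF (equivalently, the Lupu-type metric-graph coupling). Once both items of Definition \ref{Def ES} are verified, the uniqueness part of Theorem \ref{thm:FPS}(1) forces $C \cap \closure U = \A_{-2\lambda}(\Phi^U)$ almost surely, which is the desired conditional identification.
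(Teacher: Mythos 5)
First, a point of comparison: the paper does not prove this proposition itself — it is imported verbatim as Corollary 5.4 of \cite{ALS2}, where the argument runs through Lupu's metric graph isomorphism (sign clusters of the metric graph GFF coincide with metric graph loop-soup clusters), the convergence of metric graph loop-soup clusters to continuum Brownian loop-soup clusters, and the convergence of metric graph first passage sets to continuum ones. So the benchmark is that external argument, and your proposal takes a different, purely continuum route.

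That route has a genuine gap at its load-bearing step. You assert that ``the closure of the union of all iteratively discovered loops that are connected to $\ell$ by a chain of same-sign neighbouring loops coincides with the BLS cluster $C$ via the coupling underlying Theorem \ref{clebls}.'' But Theorem \ref{clebls} only identifies the outer boundaries of the \emph{outermost} clusters with CLE$_4$ loops; it carries no coupling of the interior of a cluster with nested CLE$_4$'s or with the GFF, and gives no access to the conditional law of a cluster given its outer boundary. The claim that, given $\ell$, the cluster equals the closure of the union of same-sign-connected nested two-valued-set loops is (via Remark \ref{rem:cle} and uniqueness of the FPS) essentially \emph{equivalent} to the proposition being proved, so the argument is circular precisely where the work lies. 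Relatedly, you locate the ``genuinely delicate input'' in the positivity $\Phi^U_C+2\lambda\geq 0$, but this is backwards: once the nested two-valued-set description of $C$ is granted, the identification with $\A_{-2\lambda}$ — including positivity — follows from the known iterated construction of the FPS (Lemma 2.5 of \cite{APS}, together with \cite{ALS1}). The real difficulty is establishing that the loop-soup cluster has any GFF-local-set structure at all, and that is where the isomorphism/metric-graph input of \cite{QW15, ALS2} is unavoidable; note also that the Wick-square/occupation-time identity of \cite{QW15} relates $:\Phi^{2}:$ to occupation times and does not by itself yield a signed first-order identity, so even your fallback for positivity needs the metric graph to extract the sign. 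Your final appeal to the uniqueness in Theorem \ref{thm:FPS} is correct once both properties of Definition \ref{Def ES} are in hand, but as written the proof assumes its main content.
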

	
Finally, it was observed in \cite{ALS1} that one can identify the GFF restricted to a first passage set by its Minkowski content measure.

\begin{thm}[Theorem 5.1 in \cite{ALS1}]\label{Min}
	Let $D$ be simply-connected and $\Phi$ a GFF and suppose $\A_{-a}$ is a first passage set of level $-a$. Writing $\Phi = \Phi_{\A_{-a}} + \Phi^{\A_{-a}}$ as in Definition \ref{d: local sets}, we obtain the following. 
	The measure $\nu_{\A_{-a}}:=\Phi_{\A_{-a}}+a$ is a measurable function of 
	$\A_{-a}$. Moreover, it is proportional to the Minkowski content measure in the gauge  
	$r\mapsto \vert \log(r)\vert^{1/2} r^{2}$. More precisely, almost surely for any continuous $f$ compactly supported in $D$,
	\[\nu_{\A_{-a}} = \lim_{r \to 0}\frac{1}{2}
	\vert\log(r)\vert^{1/2} \int_D f(z)  \1_{d(z,\A_{-a})\leq r}dz.\]	
\end{thm}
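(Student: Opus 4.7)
The plan is to establish the Minkowski content identification first, from which the measurability of $\nu_{\A_{-a}}$ with respect to $\A_{-a}$ follows immediately since the right-hand side of the limit expression is manifestly a function of the set $\A_{-a}$ alone. Positivity and the support property of $\nu_{\A_{-a}}:=\Phi_{\A_{-a}}+a$ are already built into the definition of a first passage set: $\Phi_{\A_{-a}}$ is harmonic and equal to $-a$ on $D\setminus \A_{-a}$, so $\nu_{\A_{-a}}$ vanishes on the complement, and by the positivity condition it is a non-negative distribution, hence a Radon measure on $\overline D$.

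For the Minkowski content identification I would use a second moment argument. Fix $f\in C_c(D)$ and let
\[
Y_r(f):=\tfrac12|\log r|^{1/2}\int_D f(z)\mathbf{1}_{d(z,\A_{-a})\leq r}\,dz.
\]
The core estimates are the one-point hitting asymptotic
\[
\P(d(z,\A_{-a})\leq r)\sim\frac{2\rho_a(z)}{|\log r|^{1/2}} \qquad (r\to 0)
\]
for $z$ at positive distance from $\partial D$, with $\rho_a(z)$ the density of $\E[\nu_{\A_{-a}}]$, together with a matching two-point decoupling asymptotic for $\P(d(z,\A_{-a})\leq r,\,d(w,\A_{-a})\leq r)$ at macroscopic separation $|z-w|$. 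Combined, these give $\E[Y_r(f)]\to\E[\nu_{\A_{-a}}(f)]$ and $\mathrm{Var}(Y_r(f))\to 0$, so $Y_r(f)\to\nu_{\A_{-a}}(f)$ in probability. A Borel--Cantelli argument along the dyadic sequence $r_n=2^{-n}$, combined with the monotonicity in $r$ of the indicator, then upgrades this to almost sure convergence simultaneously for a countable dense family of test functions $f$, and the full statement follows by density in $C_c(D)$.

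The main obstacle is the exact one-point asymptotic with the sharp constant $2$ matching the $\tfrac12$ in the normalisation. The cleanest route goes through the Brownian loop soup coupling of Proposition \ref{thm:BLSGFF2}: for $a=2\lambda$ the FPS is a union of closures of critical BLS clusters, and the required hitting estimate for small discs follows from the occupation time representation of the loop soup combined with explicit moments of the GFF Wick square. The general $a\geq 0$ case is then handled by iterating two-valued sets $\A_{-2\lambda,2\lambda}$ from Theorem \ref{thm:tvs} inside nested components. An alternative, perhaps more transparent, route is to prove the analogue first on the metric graph, where the FPS is defined pointwise, the thickening integral reduces to a classical Brownian local time calculation at level $-a$, and the constant $\tfrac12$ emerges naturally from the local time normalisation; passing to the continuum along Theorem \ref{thm:conv a} then yields the result.
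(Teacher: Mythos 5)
First, a point of reference: the paper does not prove this statement at all --- Theorem \ref{Min} is imported verbatim as Theorem 5.1 of \cite{ALS1}, so there is no in-paper proof to compare yours against. Judged on its own terms, your outline identifies the right general strategy (a second-moment argument driven by sharp one- and two-point hitting asymptotics for $\A_{-a}$, with the constant pinned down either via the loop-soup occupation field or via the metric graph, where the FPS measure is an honest local-time-type object); this is close in spirit to how the result is actually established in \cite{ALS1,ALS2}.

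There is, however, a concrete logical gap in the central step. You assert that $\E[Y_r(f)]\to\E[\nu_{\A_{-a}}(f)]$ together with $\mathrm{Var}(Y_r(f))\to 0$ yields $Y_r(f)\to\nu_{\A_{-a}}(f)$ in probability. But vanishing variance of $Y_r(f)$ would force convergence to the \emph{deterministic} constant $\E[\nu_{\A_{-a}}(f)]$, whereas $\nu_{\A_{-a}}(f)$ is genuinely random (it is a function of the random set $\A_{-a}$). What is actually needed is $\E\left[\left(Y_r(f)-\nu_{\A_{-a}}(f)\right)^2\right]\to 0$, and the whole difficulty is concentrated in the cross-moment $\E\left[Y_r(f)\,\nu_{\A_{-a}}(f)\right]$, i.e.\ in the conditional law of the FPS measure given that $\A_{-a}$ approaches a given point to within distance $r$. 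Neither this cross-moment nor the two asymptotics you invoke (the one-point estimate with the sharp constant $2$, and the two-point decoupling) is derived in your sketch; they are the substance of the proof rather than inputs to it. The remaining structural pieces --- measurability following from the limit formula, positivity and support from Definition \ref{Def ES}, and the upgrade to almost sure convergence via dyadic $r$, monotonicity of the thickened indicator, and a countable dense family of test functions --- are fine.
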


\section{Existence of the excursion decomposition and its properties} \label{s:existence}

In this section, we prove the existence of the excursion decomposition together with the properties stated in Theorem \ref{thm:prop}. These both follow rather directly from the theory of bounded type local sets and first passage sets of the GFF, though some care is needed in collecting and combining the results and techniques and in taking care of lack of absulte convergence.

We start by an elementary estimate on the $H^{-1}$ norm of a GFF on open strongly non-connected sets that can be written disjoint unions of open domains of small diameter. This lemma is used to show that contributions to the excursion decomposition coming from small excursions can be summed.
\begin{lemma}\label{lem:tail}
Suppose $\widehat D_n\subseteq D$ is a sequence of decreasing open set (not necessarily connected) such that the maximal diameter over its connected components goes to $0$ as $\epsilon\to 0$. Consider $\Phi^{\widehat D_n}$ a GFF in $\widehat D_n$. Then $\E \left[\|\Phi^{\widehat D_n}\|_{H^{-1}(D)}^2\right] \to 0$, as $n\to \infty$
\end{lemma}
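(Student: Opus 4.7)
The plan is to reduce the expected $H^{-1}(D)$-norm squared to a double integral of Green's functions and then exploit the smallness of the diameters of the connected components of $\widehat D_n$.

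First, I would express $\E[\|\Phi^{\widehat D_n}\|_{H^{-1}(D)}^2]$ as a Green's function integral. Expanding in the basis of Dirichlet eigenfunctions $(e_k)$ of $-\Delta_D$ with eigenvalues $(\lambda_k)$, one has $\|f\|_{H^{-1}(D)}^2 = \sum_k \lambda_k^{-1}(f,e_k)^2$. Since the covariance of $\Phi^{\widehat D_n}$ is the Dirichlet Green's function $G_{\widehat D_n}$ (viewed as a kernel that vanishes off $\widehat D_n \times \widehat D_n$), Fubini applied to the non-negative integrand, together with the Mercer-type identity $\sum_k \lambda_k^{-1}e_k(x)e_k(y) = G_D(x,y)$, gives
\[
\E\bigl[\|\Phi^{\widehat D_n}\|_{H^{-1}(D)}^2\bigr] \;=\; \iint_{\widehat D_n\times\widehat D_n} G_{\widehat D_n}(x,y)\, G_D(x,y)\, dx\, dy.
\]

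Second, I would use the structure of $\widehat D_n$: the Green's function $G_{\widehat D_n}$ vanishes when $x$ and $y$ lie in distinct connected components, and by domain monotonicity $G_{\widehat D_n}(x,y)\le G_D(x,y)$ when $x,y$ lie in a common component $O_i^n$. Hence the integral above is bounded by $\sum_i \iint_{O_i^n\times O_i^n} G_D(x,y)^2 \,dx\,dy$.

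Finally, I would exploit the diameter control. Setting $\epsilon_n := \sup_i \operatorname{diam}(O_i^n)\to 0$ and using the standard bound $G_D(x,y)\le \tfrac{1}{2\pi}\log(C_D/|x-y|)$ for some constant $C_D$ depending only on $D$, a polar-coordinate computation gives $\int_{O_i^n} G_D(x,y)^2\, dy = O(\epsilon_n^2(\log\epsilon_n)^2)$ uniformly in $x \in O_i^n$. Integrating over $x$, summing over $i$ and using only $\sum_i |O_i^n|\le|D|$ yields
\[
\E\bigl[\|\Phi^{\widehat D_n}\|_{H^{-1}(D)}^2\bigr] \;\le\; C|D|\,\epsilon_n^2(\log\epsilon_n)^2 \;\longrightarrow\; 0.
\]
The only step requiring any care is the identification in the first paragraph; the remaining bounds are elementary, and importantly they require no hypothesis on the number of components of $\widehat D_n$, which could very well accumulate.
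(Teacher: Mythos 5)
Your proof is correct and rests on the same key identity as the paper's, namely $\E\left[\|\Phi^{\widehat D_n}\|_{H^{-1}(D)}^2\right]=\iint_{\widehat D_n\times\widehat D_n}G_{\widehat D_n}(x,y)\,G_D(x,y)\,dx\,dy$ together with the domination $G_{\widehat D_n}\le G_D$. The only difference is the final step: the paper concludes by dominated convergence (using that $G_{\widehat D_n}(x,y)\to 0$ pointwise off the diagonal once the components shrink below $|x-y|$), whereas you conclude with an explicit per-component estimate, which has the minor added benefit of a quantitative rate $O(\epsilon_n^2(\log\epsilon_n)^2)$.
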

\begin{proof}
    This follows from the dominated convergence theorem ($G_{\widehat D_n}\leq G_{D}$) and the computation
    \begin{align*}
    \E \left[\|\Phi^{\widehat D}\|_{H^{-1}(D)}^2\right]=\iint_{\widehat D\times \widehat D} G_D(x,y)G_{\widehat D} (x,y) dx dy\to 0, \ \ \ \text{ as } n\to \infty.\end{align*}
    
\end{proof}

We are now ready to prove the existence part of the main theorem.

\begin{proof}[Proof of the existence of an excursion decomposition in Theorem \ref{thm:excdcmp}]
We start by considering the coupling $(\Phi, \text{CLE}_4 = (\ell_k)_{k \geq k}, (\sigma_k)_{k \geq 1})$ between the GFF, CLE$_4$ loops and the i.i.d. signs coming from Theorem \ref{thm:CLE4_coupling}. We can order the loops in descending order of their diameter. Note that this theorem implies the almost sure equality
$$\Phi = \sum_{k \geq 1}2\lambda \sigma_k1_{ \Int(\ell_k)} + \Phi^{\Int(\ell_k)},$$
where given the CLE$_4$ loops $(\ell_k)_{k \geq 1}$, $\Phi^{\Int(\ell_k)}$ are independent zero boundary GFFs\footnote{Note that $\Phi^{\A_{-2\lambda,2\lambda}}= \sum_k \Phi^{\Int(\ell_k)}$} inside $\Int(\ell_k)$ and $(\sigma_k)_{k \geq 1}$ are i.i.d. Rademacher random variables. 
Using Lemma \ref{lem:tail} to control the tails, we can restrict our attention to the subset $J_\eps$ of $k\in \N$ such that the diameter of $C_k$ is at least $\epsilon > 0$. As the set of CLE$_4$ loops is locally finite, $J_\eps$ is finite.

Now, consider $k \in J_\eps$. Conditionally on $\ell_k$ the law of $\Phi$ restricted to $\Int(\ell_k)$ is equal to that of $2\lambda \sigma_k + \Phi^{\Int\ell_k}$, where the conditional law of $\Phi^{\Int(\ell_k)}$ is that of a GFF in $\Int(\ell_k)$. We now sample $A_k:=\A_{-2\lambda}(\sigma_k \Phi^{\Int(\ell_k)})$  and define the positive measure $\nu_k:= \sigma_k\Phi_{A_k}^{\Int(\ell_k)}+  2\lambda$. This measure is supported in $A_k$ thanks to Definition \ref{Def ES}.
We then have that
\begin{align*}
2\lambda \sigma_k + \Phi^{\Int(\ell_k)} = \sigma_k\nu_{\A_{-2\lambda}} + \Phi^{\Int(\ell_k)\setminus\A_{-2k}} .\end{align*}
Thus we obtain the following decomposition
$$\Phi = \sum_{k \in J_\eps}\left(\sigma_k\nu_{A_{k}} + \Phi^{\Int(\ell_k)\backslash A_{k}}\right) + \sum_{k' \notin J_\eps}\left(2\lambda \sigma_{k'}1_{\Int{\ell_{k'}}} + \Phi^{\Int(\ell_{k'})}\right),$$
where the outermost boundaries $\ell_k$ and $\ell_{k'}$ are ordered in the descending order of their diameter. We now iterate this process inside each connected component of $D \setminus \overline{\bigcup_{k \in J_\eps} A_k}$.

To write down the result of this iterative construction, we need to fix some notation. We denote the outermost loops and clusters of the $n-$th iteration that themselves have diameter larger than $\eps$ by $(\ell_{n, k})_{k \in J_{n, \eps}}, (C_{n,k})_{k \in J_{n, \eps}}$, having ordered them decreasingly by diameter, and the corresponding signs and Minkowski measures by $(\sigma_{n,k})_{k \in J_{n, \eps}}, (\nu_{n,k})_{k \in J_{n, \eps}}$. 

The iteration then gives us the following almost sure equality:
\begin{align*}
\Phi = \sum_{k \in \cup_{n \leq N} J_{n,\eps}}\left (\sigma_k\nu_{A_{n,k}} + \Phi^{\Int(\ell_{n,k}\backslash A_{n,k})} \right)+ \sum_{\substack{(n',k'):\\  k'\notin  J_{n',\eps}\\  n'\leq N}}\left( 2\lambda \sigma_k1_{\Int(\ell_k)}  + \Phi^{\Int(\ell_{n,k})}\right),\end{align*}
where again the ordering in the first finite sum is along decreasing size of the diameter. This writing allows us to apply Lemma \ref{lem:tail} directly to obtain an error of order $o_\epsilon(1)$ independently of the level of iteration $N$ on the second term. Part (1) of Theorem \ref{thm:excdcmp} now follows from the a.s. martingale convergence theorem and the fact that for any $\epsilon > 0$, there is almost surely a finite $N$ such that all loops of diameter larger than $\eps$ have been discovered {(for a single level this is just local finiteness of CLE$_4$, for the nested version see e.g. Theorem 1.5 in \cite{AruPaponPowell}).}

The properties listed in (3) of Theorem \ref{thm:excdcmp} for the excursion clusters $(C_k)_{k \geq 1}$ hold by construction. The Markov property follows from the following claim combined with the argument above that again shows we can sum the sign excursions in their decreasing order of diameter.
\begin{claim}

Consider $\gamma$ a smooth simple path in $\closure{D}$ starting from the boundary. 
    Let $(C_k, \ell_k)_{k \in I}$ be the collection of outermost clusters with $C_k \cap \gamma \neq \emptyset$ and denote their outer boundaries by $\ell_k$. Let $I_\eps$ denote the set of $k \in I$ for which the diameter of $C_k$ is at least $\epsilon$ and define $A_\eps = \closure{\cup_{k \in I_\eps} C_k} \cup \closure{ \cup_{k \in I} \ell_k}$. Then $A_\eps$ is a local set, such that $\phi_{A_\eps} = \sum_{i \in I_\eps}
    {\sigma_i}
    \nu_i + \sum_{i \in I \setminus I_\eps} 2\lambda {\sigma_i}1_{z \in \Int{L_i}} $. 
\end{claim}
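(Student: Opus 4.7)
My plan is to realize $A_\eps$ as $\gamma^{ext} \cup \bigcup_{k \in I_\eps} C_k$, check that each piece is a local set, and then glue them using Lemma \ref{lem:BPLS}. For the outer piece I would invoke Proposition \ref{prop:Markov CLE} directly: via the CLE$_4$ coupling of Theorem \ref{thm:CLE4_coupling}, $\gamma^{ext} := \overline{\bigcup_{k \in I} \ell_k}$ is a BTLS of $\Phi$ whose harmonic function equals $2\lambda \sigma_k$ on each $\Int(\ell_k)$ with $k \in I$ and vanishes off $\bigcup_k \Int(\ell_k)$. Local finiteness of CLE$_4$ then forces $I_\eps$ to be finite, even though $I$ itself may be countably infinite.

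Next, conditionally on $\gamma^{ext}$ the residual field $\Phi^{\gamma^{ext}}$ decomposes into a disjoint sum of independent zero-boundary GFFs $\tilde \Phi_k$ on the components $\Int(\ell_k)$. For each of the finitely many $k \in I_\eps$ I would sample $C_k \cup \ell_k := \A_{-2\lambda}(\sigma_k \tilde \Phi_k)$, the level $-2\lambda$ first passage set provided by Theorem \ref{thm:FPS}. This $C_k$ coincides with the outermost cluster from the existence proof in Section \ref{s:existence} (and is a local set of $\tilde \Phi_k$), and the defining property of the FPS (Definition \ref{Def ES}) gives $(\sigma_k \tilde \Phi_k)_{C_k \cup \ell_k} = -2\lambda + \nu_k$ with $\nu_k$ a positive measure supported on $C_k$. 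Because these are finitely many conditionally independent local sets living in disjoint components of $D \setminus \gamma^{ext}$, Lemma \ref{lem:BPLS}(2)--(3) implies $A_\eps$ is a local set of $\Phi$. I would then compute the harmonic extension component by component: on $\Int(\ell_k)$ with $k \in I \setminus I_\eps$ nothing is added so $\Phi_{A_\eps} = 2\lambda \sigma_k \mathbf{1}_{\Int(\ell_k)}$, while on $\Int(\ell_k)$ with $k \in I_\eps$ one has $\Phi_{A_\eps} = 2\lambda \sigma_k + \sigma_k(-2\lambda + \nu_k) = \sigma_k \nu_k$. Summing these produces the formula in the claim.

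The main subtlety is that $I$ may be countably infinite because small CLE$_4$ loops accumulate along $\gamma$, which is exactly why $\gamma^{ext}$ cannot be built as a finite union of BTLS; this is what Proposition \ref{prop:Markov CLE} is designed to handle, and after invoking it the remaining operations involve only the finite collection $\{C_k\}_{k \in I_\eps}$, so no further limiting argument is needed. A secondary point to track carefully is the sign flip when passing from the FPS of $\sigma_k \tilde \Phi_k$ back to the harmonic function of $\tilde \Phi_k$ itself; this is what produces the clean cancellation between the $2\lambda \sigma_k$ contribution from $\gamma^{ext}$ and the $-2\lambda \sigma_k$ contribution from the FPS, leaving only $\sigma_k \nu_k$.
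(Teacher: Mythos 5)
Your proposal is correct and follows essentially the same route as the paper, which proves the claim in one line by combining the Markov property of $\A_{-2\lambda,2\lambda}$ along $\gamma$ (Proposition \ref{prop:Markov CLE}), the strong Markov/gluing property of local sets, and the FPS construction from Section \ref{s:existence}; you have simply written out the component-by-component sign cancellation $2\lambda\sigma_k + \sigma_k(\nu_k - 2\lambda) = \sigma_k\nu_k$ that the paper leaves implicit.
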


\begin{proof} The claim follows directly from iterating the Markov property of $\A_{-2\lambda,2\lambda}$ in Proposition \ref{prop:Markov CLE}, together with the strong Markov property of FPS and the construction above.

\end{proof}

\end{proof}

We proceed to discuss further properties of the excursion decomposition, i.e. Theorem \ref{thm:prop}, assuming uniqueness of the decomposition. In essence, this amounts to handpicking a few interesting results from the literature.

We are now ready to give a proof of Theorem \ref{thm:prop}, assuming already uniqueness.

\begin{proof}[Proof of Theorem \ref{thm:prop}]

Properties (1) and (2) follow directly from the construction given above. Property (3) follows from the construction of the excursion clusters and excursions via First passage sets of height $\pm 2\lambda$ and Theorem \ref{Min}. 

The law of outer boundaries of outermost clusters is also clear from the construction. The identification with clusters of 2D Brownian loop soup follows further from iterating Theorem \ref{clebls} to identify the outer boundaries of critical BLS clusters with those of excursion clusters in the construction above, and Theorem \ref{thm:BLSGFF2} to identify the critical BLS clusters with the excursion clusters above.
\end{proof}

To finish this section, we explain here how the whole sign cluster could be seen by iterating CLE$_4$. Such iterations were first considered in \cite{Ai} to give a geometric martingale approximation of the Liouville measure; the relation to Brownian loop soup clusters became clear with \cite{ALS1, ALS2}.

\begin{rem}\label{rem:cle}[Sign cluster via nested CLE$_4$]{
We saw that conditionally on the outer boundary of an excursion cluster, the cluster itself is distributed as the FPS of height $2\lambda$. However, there is a way to obtain first passage sets using iterations of two-valued sets; for example, see Lemma 2.5 in \cite{APS} and the discussion under it. Indeed, the it is explained there that FPS of level $2\lambda$ can be obtained by iterating two-valued local sets $\A_{-2\lambda, 2\lambda}$ until every connected component of the complement has boundary conditions $2\lambda$. More precisely, we start by sampling $\A_{-2\lambda, 2\lambda}$ and we repeat the construction inside each loop which does not have the label $2\lambda$. This way we observe around each point a random walk with values in $2\lambda \Z$, stopped at reaching $2\lambda$. As $\A_{-2\lambda, 2\lambda}$ has the law of CLE$_4$, and the signs of labels $\pm 2\lambda$ are i.i.d., we have a way of describing the whole sign cluster using iterated CLE$_4$ via a structure of branching simple random walks.}
\end{rem}

\section{Uniqueness of the excursion decomposition}\label{S:uniqueness}

In this section, we prove the uniqueness part of Theorem \ref{thm:excdcmp}. Throughout this section $( \nu_k, C_k, \sigma_k)_{k \geq 1}$ denotes the excursion decomposition constructed in Section \ref{s:existence}, and $(\hat \mu_k, \hat C_k, \hat \sigma_k)_{k \geq 1}$ is another decomposition that satisfies the properties of Theorem \ref{thm:excdcmp} for the same GFF $\Phi$. By conformal invariance we may assume that we work in the unit disk $\D$ throughout this section.

The proof of uniqueness is dissected into following propositions. We first show that the excursion clusters in the construction of the previous section are in a certain sense minimal: 

\begin{prop}\label{prop:ss}
     Almost surely, for every $k\in \N$ there exists $\hat k(k) \in \N$ such that $C_k\subseteq \hat C_{\hat k(k)}$. 
\end{prop}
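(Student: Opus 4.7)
My plan is based on a topological observation: since the $\hat C_{\hat j}$ are closed and pairwise disjoint and $C_k$ is connected, the sets $\{C_k \cap \hat C_{\hat j}\}_{\hat j \geq 1}$ form pairwise disjoint closed subsets of $C_k$ whose union is $C_k \cap \bigcup_{\hat j} \hat C_{\hat j}$. Connectedness of $C_k$ then forces only one of them to be nonempty, so it suffices to prove the pointwise containment $C_k \subseteq \bigcup_{\hat j \geq 1} \hat C_{\hat j}$.

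To prove this pointwise containment, I would first reduce to the case of an outermost cluster $C_k$ with outer CLE$_4$-loop $\ell_k$ (WLOG $\sigma_k = +1$), by iterating the argument inside each nested outer loop. For a putative $z \in C_k$, pick a smooth simple path $\gamma \subset \overline D$ from $\partial D$ passing through $z$ and apply Theorem \ref{thm:excdcmp}(2) to both decompositions. This yields that both $\gamma^{exc}$ and $\hat\gamma^{exc}$ (the analogous set for the $\hat$-decomposition) are local sets of $\Phi$, giving the Markov identities $\Phi = \Phi^{\gamma^{exc}} + \Phi_{\gamma^{exc}} = \hat\Phi^{\hat\gamma^{exc}} + \hat\Phi_{\hat\gamma^{exc}}$, and by construction $C_k \subseteq \gamma^{exc}$.

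The crux is to show $C_k \subseteq \hat\gamma^{exc}$. I would argue by contradiction: suppose a fragment of $C_k$ lies in a component $U$ of $D \setminus \hat\gamma^{exc}$. Then $\hat\Phi_{\hat\gamma^{exc}}|_U = 0$ as a signed measure supported in $\hat\gamma^{exc}$, so $\Phi|_U$ is a zero-boundary GFF on $U$. On the other hand, $\Phi_{\gamma^{exc}}$ restricted to $U \cap \Int(\ell_k)$ contains the nontrivial positive measure $\sigma_k \nu_k|_{C_k \cap U}$, whose presence is incompatible with $\Phi|_U$ being a pure GFF. To make this rigorous, one would form the union local set $\gamma^{exc} \cup \hat\gamma^{exc}$ (via Lemma \ref{lem:BPLS}(2)), compare the two harmonic-part identifications of $\Phi$ on $U \cap \Int(\ell_k)$, and invoke the FPS characterization of Theorem \ref{thm:FPS} (and the Minkowski content description of $\nu_k$ from Theorem \ref{Min}) to derive a contradiction. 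Finally, by choosing $\gamma$ approaching $z$ and using local finiteness of $(\hat C_{\hat j})$ to handle the accumulation of small clusters, one concludes $z \in \hat C_{\hat j(z)}$ for some $\hat j(z)$.

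The main obstacle will be this minimality step. The $\hat$ Markov property is stated only for paths from $\partial D$, so one must carefully propagate it into $\Int(\ell_k)$ and identify the trace $\hat\gamma^{exc} \cap \Int(\ell_k)$ as a local set of the inner GFF satisfying an FPS-type positivity condition. Using Lemma \ref{lem:BPLS}(2) on unions of local sets together with the uniqueness clause of Theorem \ref{thm:FPS} should provide the mechanism, but pinning down the precise relationship between the two signed-measure decompositions inside $\Int(\ell_k)$ — in particular matching boundary values on $\ell_k$ — is the key technical step.
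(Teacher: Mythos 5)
Your strategy diverges from the paper's and has a genuine gap at its crux. The paper never attempts the pointwise containment $C_k\subseteq\bigcup_{\hat j}\hat C_{\hat j}$. Instead it first shows that the \emph{outer boundary} $L_k$ of $C_k$ is contained in a single $\hat C_{\hat k(k)}$, using that $L_k$ is an SLE$_4$-type level-line loop and that level lines cannot enter a complementary component of a local set and touch its boundary more than once (Claim 17 and Lemma 16 of \cite{ASW}), combined with the bespoke exploration set $\chi_\eps$ of Proposition \ref{c.chi_epsilon}, engineered precisely so that any simple loop of diameter $\geq\eps$ around the origin either lies inside $\chi_\eps$ or must hit two distinct prime ends of its complement --- something a level line cannot do. The containment of the full cluster then follows by realizing the FPS $\A_{-2\lambda}$ inside $\Int(L_k)$ as an iteration of two-valued sets and applying the same level-line non-crossing argument. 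This SLE input is the essential mechanism and is absent from your proposal.

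Two concrete problems with your route. First, your contradiction only detects fragments of $C_k$ in a component $U$ of $D\setminus\hat\gamma^{exc}$ that carry positive $\nu_k$-mass; fragments of $\nu_k$-measure zero are invisible to any measure or $L^2$ comparison, so you cannot reach the pointwise inclusion $C_k\subseteq\bigcup_{\hat j}\hat C_{\hat j}$ that your Sierpi\'nski-type connectedness step requires. Second, the claimed incompatibility of ``a nontrivial positive measure supported in $U$'' with ``$\Phi|_U$ is a zero-boundary GFF'' is unjustified as stated: the whole point of the paper is that a zero-boundary GFF \emph{is} a sum of such measures, so the mere presence of a positive piece is no contradiction. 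The paper's version of this kind of argument (Lemma \ref{l.no_inside}, via Proposition 4.5 of \cite{ALS1}) works only because there the relevant restricted set is a local set whose harmonic part has a definite sign, and that setup becomes available only \emph{after} Proposition \ref{prop:ss} is established. A smaller issue: the Markov property (2) of Theorem \ref{thm:excdcmp} is stated for fixed smooth paths from $\partial D$, whereas you want to run $\gamma$ through a random point $z\in C_k$; the paper sidesteps this by working with the fixed segment from $-i$ to $0$ and the clusters surrounding the origin.
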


This already implies that almost surely each excursion decomposition has a cluster surrounding any fixed point of the domain.

Then, we show that the signs of intersecting sign clusters of the two decompositions introduced above have to match.
\begin{prop}\label{prop:es}
    Let $\hat k(k)$ be as in Proposition \ref{prop:ss}. Then almost surely, $\sigma_{k}=\hat \sigma_{\hat k(k)}$.
\end{prop}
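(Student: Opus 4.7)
The plan is to pair both decompositions with a smooth nonnegative test function $f_r$ localized in a small ball $B(z_0,r)$ around a chosen point $z_0\in C_k$, and then combine the Markov properties of the two decompositions along a common smooth curve through $z_0$ to extract a sign identity. By Proposition~\ref{prop:ss} and iterating over the nested levels of $\A_{-2\lambda,2\lambda}$, we may reduce to the case where $C_k$ is outermost in the original decomposition.

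Fix $\epsilon>0$ small. By local finiteness of both decompositions (condition~(3) of Theorem~\ref{thm:excdcmp}), choose a smooth simple curve $\gamma\subset\closure{D}$ from $\partial D$ to a point $z_0\in C_k\subseteq\hat C_{\hat k(k)}$ meeting no cluster of diameter at least $\epsilon$ other than $C_k$ in the original and $\hat C_{\hat k(k)}$ in the hat decomposition. Applying condition~(2) of Theorem~\ref{thm:excdcmp} to each decomposition along $\gamma$, we obtain
\[
\Phi=\sigma_k\nu_k+R_\epsilon+\Phi^{\gamma^{\mathrm{exc}}}=\hat\sigma_{\hat k(k)}\hat\mu_{\hat k(k)}+\hat R_\epsilon+\hat\Phi^{\gamma^{\mathrm{exc}}},
\]
where $R_\epsilon,\hat R_\epsilon$ are sums over clusters of diameter less than $\epsilon$ and $\Phi^{\gamma^{\mathrm{exc}}},\hat\Phi^{\gamma^{\mathrm{exc}}}$ are zero-boundary GFFs on the two complements, each conditionally independent of its local-set data.

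Pair both expressions with $f_r\geq 0$ supported in $B(z_0,r)$, $r\ll\epsilon$. Taking conditional expectations over a sigma-algebra containing both $(\gamma^{\mathrm{exc}},\Phi_{\gamma^{\mathrm{exc}}})$ and $(\hat\gamma^{\mathrm{exc}},\hat\Phi_{\gamma^{\mathrm{exc}}})$, the two Gaussian remainders have zero conditional mean; applying Lemma~\ref{lem:tail} to the tail contributions as $\epsilon\to 0$, we arrive at
\[
\sigma_k\,\nu_k(f_r)=\hat\sigma_{\hat k(k)}\,\hat\mu_{\hat k(k)}(f_r).
\]
Choosing $z_0$ in the support of $\nu_k$ so that $\nu_k(f_r)>0$ for small $r$ (valid $\nu_k$-a.e., hence on a nonempty set), the nonnegativity of both measures forces $\hat\mu_{\hat k(k)}(f_r)>0$ and $\sigma_k=\hat\sigma_{\hat k(k)}$.

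The main obstacle is making the conditional-expectation step rigorous without assuming $\Phi$-measurability of the hat decomposition: since the two families of local-set data are not a priori compatible, justifying that both Gaussian remainders retain zero conditional mean under the joint conditioning will likely require Gaussian-field manipulations or an extension of the local-set uniqueness of Lemma~\ref{lem:BPLS}(1) to the setting of signed-measure valued conditional means, together with a careful control of the error terms $R_\epsilon, \hat R_\epsilon$ paired against the concentrated $f_r$ as $\epsilon,r \to 0$.
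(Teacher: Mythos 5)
Your overall strategy --- isolate the cluster by exploring along a curve, pair with nonnegative test functions, and use positivity of the measures to read off the sign --- is the same as the paper's, but the execution has a genuine gap at exactly the point you flag, and as written the argument does not close. First, the conditional-expectation step: there is no reason why $\Phi^{\gamma^{\mathrm{exc}}}$ should have zero conditional mean given a sigma-algebra that also contains $(\hat\gamma^{\mathrm{exc}},\hat\Phi_{\hat\gamma^{\mathrm{exc}}})$. The Markov property of each decomposition only centres its own remainder given its own data; once one knows $\gamma^{\mathrm{exc}}\subseteq\hat\gamma^{\mathrm{exc}}$ (Proposition \ref{prop:ss}), the conditional mean of $\Phi^{\gamma^{\mathrm{exc}}}$ given the hat data is $\Phi_{\hat\gamma^{\mathrm{exc}}}-\Phi_{\gamma^{\mathrm{exc}}}$, which is not zero, and conversely $\sigma_k\nu_k$ is not measurable with respect to the hat data, so conditioning on either side scrambles the identity you want. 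Second, the ball-supported $f_r$ forecloses the natural repair: $B(z_0,r)$ has positive capacity in $D\setminus\gamma^{\mathrm{exc}}$, so the conditional variance of $(\Phi^{\gamma^{\mathrm{exc}}},f_r)$ is of the same order as $\nu_k(f_r)^2$ (both are roughly $r^4|\log r|$), and letting $r\to0$ degenerates the putative identity to $0=0$.

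The paper's proof fixes both issues at once (Claim \ref{c:mesure}). Run the exploration $\hat\gamma^{\mathrm{exc}}(t)$ up to the stopping time $\tau$ at which the cluster appears (Lemma \ref{l:strong_Markov}), fix a closed set $A\subseteq C_k\setminus\{\hat x\}$ of positive $\nu_k$-mass, where $\hat x$ is the entry point, and take $f_n\in[0,1]$ equal to $1$ on $A$ and vanishing at distance $2^{-n}$ from the cluster. Then $(\Phi_{\hat\gamma^{\mathrm{exc}}(\tau)},f_n)$ stays bounded away from $0$ by the mass of $A$, while the conditional variance of $(\Phi^{\hat\gamma^{\mathrm{exc}}(\tau)},f_n)$ tends to $0$ because the support of $f_n$ collapses onto the local set. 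Hence $\liminf_n(\Phi,f_n)$ --- a quantity defined without reference to either decomposition --- detects the sign almost surely. Applying this once for each decomposition with the \emph{same} $A$ and the \emph{same} $f_n$ (chosen measurably with respect to the hat exploration, which dominates the non-hat one, so the required conditional independence holds in both applications) forces $\sigma_k=\hat\sigma_{\hat k(k)}$; no joint conditioning is ever needed. To repair your write-up, replace the conditional-mean step by this variance-vanishing argument and replace $f_r$ by functions equal to $1$ on a fixed positive-mass closed subset of $C_k$ and supported in a shrinking neighbourhood of the cluster.
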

Next, we argue that there is a 1-1 correspondence between the clusters.
 \begin{prop}\label{prop:ws}
The function $k \to \hat{k}(k)$ of Proposition \ref{prop:ss} is almost surely injective.
 \end{prop}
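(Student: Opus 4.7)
The plan is a proof by contradiction using the Markov property of both decompositions. Suppose $\hat k(k_1) = \hat k(k_2) = \hat k_0$ for two distinct indices $k_1 \neq k_2$, so that $C_{k_1}$ and $C_{k_2}$ are disjoint compact connected sets both contained in the connected set $\hat C_{\hat k_0}$. Recall from Theorem \ref{thm:prop}(4) that each $C_k$ lies in the open interior of $D$ (the outermost $C_k$'s have CLE$_4$ outer boundaries that avoid $\partial D$, and the same applies by iteration to inner clusters).

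The key geometric input is to choose a smooth simple curve $\gamma \subseteq \overline D$ from $\partial D$ that crosses $C_{k_1}$ while staying at positive distance from $C_{k_2}$. Such a $\gamma$ exists by local finiteness: one can route $\gamma$ through $C_{k_1}$ while avoiding the finitely many other $C_k$'s of diameter exceeding $\tfrac12 \operatorname{dist}(C_{k_1}, C_{k_2})$. Applying Theorem \ref{thm:excdcmp}(2) to both decompositions with this $\gamma$, local finiteness together with disjointness of the $C_k$'s yields $\operatorname{dist}(C_{k_2}, \gamma^{exc}) > 0$, while $\hat C_{\hat k_0}\cap \gamma \neq \emptyset$ (since $C_{k_1}\subseteq \hat C_{\hat k_0}$ meets $\gamma$) forces $\hat C_{\hat k_0} \subseteq \hat\gamma^{exc}$, and in particular $C_{k_2} \subseteq \hat\gamma^{exc}$.

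Let $O$ be the connected component of $D \setminus \gamma^{exc}$ containing $C_{k_2}$. By Theorem \ref{thm:excdcmp}(2), conditional on $\gamma^{exc}$ the field $\Phi|_O = \Phi^{\gamma^{exc}}|_O$ is a zero-boundary GFF on $O$. By the restriction property of the Brownian loop soup, the critical BLS inside $O$ has clusters $\{C_k : C_k \subseteq O\}$, and Theorem \ref{thm:prop}(4) applied on $O$ then identifies these as exactly the clusters of the canonical excursion decomposition of $\Phi|_O$; in particular $C_{k_2}$ is such a cluster.

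The contradiction is to be produced by inducing a competing excursion decomposition of $\Phi|_O$ from the $\hat C$-decomposition of $\Phi$ on $D$. On test functions supported in $O$ one has the identity $\Phi|_O = \Phi^{\hat\gamma^{exc}}|_O + \sum_{\hat k : \hat C_{\hat k}\cap \gamma \neq \emptyset} \hat\sigma_{\hat k} \hat\nu_{\hat k}|_O$, and the induced cluster containing $C_{k_2}$ is the connected component of $\hat C_{\hat k_0}\cap O$ containing $C_{k_2}$; since $\hat C_{\hat k_0}$ is connected and reaches $\gamma^{exc}\subseteq \partial O$, this component strictly contains $C_{k_2}$, carrying a nontrivial tendril of $\hat\nu_{\hat k_0}$-mass towards $\partial O$. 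Applying Proposition \ref{prop:ss} (and Proposition \ref{prop:es} for the signs) on $O$ to the canonical and induced decompositions then forces this strictly larger induced cluster to contain a further canonical cluster $C_{k'}\subseteq O$, and iterating produces an infinite descent over diameters excluded by local finiteness of the $\hat C$-decomposition. The principal obstacle is the preceding step of rigorously inducing the competing decomposition on $O$: since $\hat C_{\hat k_0}\cap O$ need not be connected once $\hat C_{\hat k_0}$ is cut by $\gamma^{exc}$, one must carefully iterate the Markov property of the $\hat C$-decomposition along a sequence of smooth paths in $D$ approximating the portion of $\gamma^{exc}$ bordering $O$, combined with a limiting argument in the Sobolev norms of Theorem \ref{thm:excdcmp}(1), to produce valid connected clusters inside $O$.
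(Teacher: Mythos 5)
Your strategy is genuinely different from the paper's, but as written it has a real gap at exactly the point you flag as ``the principal obstacle'', and that obstacle is not a technicality one can wave away. To run the infinite descent you need the pieces of $\hat C_{\hat k_0}\cap O$ (with the restricted measures $\hat\nu_{\hat k_0}|_O$) to form a \emph{valid} excursion decomposition of the zero-boundary GFF $\Phi|_O$ in the sense of Theorem \ref{thm:excdcmp}, and you then need some comparison principle on $O$ to conclude that a strictly larger induced cluster must swallow a second canonical cluster. Neither step is available: the components of $\hat C_{\hat k_0}\cap O$ need not be connected supports of signed measures satisfying the Markov property and the ordered-sum convergence on $O$, and --- more fundamentally --- Proposition \ref{prop:ss} only says each canonical cluster sits \emph{inside} some competing cluster; it does not forbid a competing cluster from strictly containing exactly one canonical cluster (indeed ruling that out is essentially the content of Proposition \ref{prop:eq}, which is proved \emph{after} injectivity and uses it). So the descent does not get off the ground, and since uniqueness is what the whole section is establishing, you cannot appeal to any uniqueness of decompositions on $O$ to identify the induced one with the canonical one. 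A smaller issue: your $\gamma$ is chosen depending on the realization of $C_{k_1},C_{k_2}$, whereas the Markov property of Theorem \ref{thm:excdcmp}(2) is stated for fixed paths; this is repairable with a countable dense family of paths (as the paper does with dyadic points), but it needs to be said.

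For contrast, the paper avoids all of this topology by a second-moment argument. It first proves (Lemma \ref{c:covariance_different_cluster}, via the local-set exploration $\hat\gamma^{exc}(\tau_z\wedge\tau_{z'})$ and the uniqueness of $\A_{-2\lambda,2\lambda}$) that $\E[(\sigma_k\nu_k,1)(\sigma_{k'}\nu_{k'},1)\1_{E_{k,k'}}]=0$ on the event $E_{k,k'}=\{\hat k(k)\neq\hat k(k')\}$. Since the signs are i.i.d.\ and independent of the clusters, $\E[(\sum_{k\le N}\sigma_k\nu_k,1)^2]=\sum_{k\le N}\E[(\nu_k,1)^2]$; on the complementary event $E_{k,k'}^c$ Proposition \ref{prop:es} forces $\sigma_k=\sigma_{k'}$, so every cross term $\E[(\sigma_k\nu_k,1)(\sigma_{k'}\nu_{k'},1)\1_{E_{k,k'}^c}]$ is nonnegative, hence zero, hence $\P(E_{k,k'}^c)=0$. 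If you want to salvage your geometric approach you would essentially have to prove Proposition \ref{prop:eq} simultaneously with \ref{prop:ws}; I would recommend instead adopting the covariance route.
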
 
 And we finalise the list of propositions by arguing that the clusters are almost surely equal; the equality of measures is concluded in the proof of the theorem.
 \begin{prop}\label{prop:eq}
For all $k \geq 1$, with $k \to \hat{k}(k)$ as above, we have $C_k = \hat C_{\hat{k}(k)}$ almost surely. Moreover, the function $k \to \hat{k}(k)$ is also almost surely surjective.
 \end{prop}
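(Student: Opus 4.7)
The plan is to establish both the set equality $C_k=\hat C_{\hat k(k)}$ and the surjectivity of $\hat k$ through a single contradiction, by showing that $\hat C_{j_0}\setminus C_{\hat k^{-1}(j_0)}=\emptyset$ for every $j_0$, with the convention $C_{\hat k^{-1}(j_0)}=\emptyset$ when $j_0$ is not in the image of $\hat k$. Once this is known, the equality of measures $\nu_k=\hat\mu_{\hat k(k)}$ will follow from Theorem \ref{Min}, which identifies both sides with the Minkowski content of $C_k=\hat C_{\hat k(k)}$ in the same gauge.

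The contradiction is obtained by applying the Markov property of Theorem \ref{thm:excdcmp}(2) to both decompositions along the same curve. Suppose some $j_0$ has $\hat C_{j_0}\setminus C_{\hat k^{-1}(j_0)}\neq\emptyset$ and pick a point $z_0$ in this difference. For a scale $\varepsilon>0$ smaller than $\operatorname{diam}(\hat C_{j_0})$, choose, using local finiteness of both decompositions, a smooth curve $\gamma\subset\closure{D}$ from $\partial D$ to $z_0$ that meets no cluster of either decomposition of diameter at least $\varepsilon$ other than $\hat C_{j_0}$, and that avoids $C_{\hat k^{-1}(j_0)}$ entirely. By Proposition \ref{prop:ss} the inclusion $\gamma^{exc}\subseteq\hat\gamma^{exc}$ holds, with $z_0$ in the strict difference; by Lemma \ref{lem:BPLS}(2), the leftover set $\hat\gamma^{exc}\setminus\gamma^{exc}$ is then a local set of the conditional GFF $\Phi^{\gamma^{exc}}$ on $D\setminus\gamma^{exc}$, whose associated distributional value equals the restriction of $\hat\sigma_{j_0}\hat\mu_{j_0}$ to $\hat C_{j_0}\setminus C_{\hat k^{-1}(j_0)}$, modulo an $H^{-1}$-small error from clusters of diameter smaller than $\varepsilon$ controlled by Lemma \ref{lem:tail}.

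This value is a positive (or negative, depending on $\hat\sigma_{j_0}$) measure whose associated harmonic-function part on the complement vanishes. Passing to the closure of its support, adjoining $\partial(D\setminus\gamma^{exc})$, and verifying the local set property by another application of Lemma \ref{lem:BPLS}, one obtains a candidate local set of $\Phi^{\gamma^{exc}}$ satisfying, inside each connected component of $D\setminus\gamma^{exc}$, all the conditions of Definition \ref{Def ES} with $a=0$. Invoking the uniqueness of the level-zero first passage set (Theorem \ref{thm:FPS}(1)) in each such component, this candidate must coincide with the trivial level-zero FPS, which carries no mass; hence the residual measure vanishes. Sending $\varepsilon\to 0$ removes the truncation error and yields $\hat C_{j_0}=C_{\hat k^{-1}(j_0)}$, giving set-equality and surjectivity simultaneously.

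The main obstacle is the identification step in the preceding paragraph: one must verify that, after restricting to the closure of the support of the residual measure, the resulting object fits literally into the FPS framework of Definition \ref{Def ES}, in particular that the support condition and the boundary containment both hold, so that Theorem \ref{thm:FPS} applies. Since both the residual set and its associated measure are measurable functions of $\Phi^{\gamma^{exc}}$ via the hat decomposition, the local set property and the required containments can be extracted from Lemma \ref{lem:BPLS} with care, but this is the most delicate point of the argument. A secondary technical subtlety is the $H^{-1-\varepsilon}$-only convergence of the decomposition sums, which forces all manipulations to proceed through finite-scale truncations rather than term-by-term rearrangement.
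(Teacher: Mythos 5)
Your overall strategy (show the residual set $\hat\gamma^{exc}\setminus\gamma^{exc}$ carries no data, then use that $\hat C_{j_0}$ is by hypothesis the support of $\hat\mu_{j_0}$) is close in spirit to the paper's, but the identification step that you yourself flag as the delicate point is where the argument genuinely breaks. At any fixed truncation scale $\eps$, the local set $\hat\gamma^{exc}\setminus\gamma^{exc}$ does not carry only the restriction of $\hat\sigma_{j_0}\hat\mu_{j_0}$: its harmonic-function part is $\sum_j\hat\sigma_j\hat\mu_j-\sum_k\sigma_k\nu_k$, summed over all clusters of \emph{both} decompositions meeting $\gamma$, and the small clusters come with both signs. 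So condition (2) of Definition \ref{Def ES} (positivity of the associated measure) fails, and the uniqueness statement of Theorem \ref{thm:FPS} is an exact statement that cannot be invoked ``modulo an $H^{-1}$-small error''; nor can you send $\eps\to 0$ first, since the limiting object would then have to be re-equipped with the local-set and FPS structure you need. A sign-definite version of your argument does exist and is exactly how the paper proves Lemma \ref{l.no_inside}: inside an interior hole of $C_k$ the residual is a single-signed piece of the single cluster $\hat C_{\hat k(k)}$, and the first part of Proposition 4.5 of \cite{ALS1} applies. The obstruction is confined to, but unavoidable in, the component touching $\partial D$. A secondary worry is that your ``trivial level-zero FPS'' is a degenerate edge case the paper never relies on.

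The paper replaces this step by an ingredient absent from your proposal: using the injectivity from Proposition \ref{prop:ws}, the residual $\hat A\setminus A$ in the exterior component cannot meet any unhatted cluster (two distinct $C_k$'s inside one $\hat C_j$ would contradict injectivity), hence is contained in the iterated copies of $\A_{-2\lambda,2\lambda}$; it therefore has Minkowski dimension strictly less than $2$, is a thin local set of $\Phi^{\gamma^{exc}}$ with zero boundary values, and so is empty by Lemma 9 of \cite{ASW}. This dimension/thinness argument is the missing idea, and without it (or a correct substitute for the sign-definiteness you would need) the proof does not close. For surjectivity one also needs the separate observation that a putative cluster reduced to a single point is impossible because the GFF puts no mass on points.
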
 

We now describe a certain way of exploring excursion clusters using local set processes, then prove the propositions one by one, and finish the section by concluding the proof of the theorem.

\subsection{A local set exploration of excursion clusters}

Throughout the proofs we will make use of the following local set, obtained by exploring the clusters around a line segment until some stopping time. 

\begin{lemma}\label{l:strong_Markov}
Let $\gamma:[0,1]\mapsto \D$ be a simple  curve. We define $\hat\gamma^{exc}(t)$ as the union  of all $\hat C_k $ that intersect $\gamma([0,t])$ and take $\tau$ a stopping time for the filtration $\F_t:=\bigvee_{s\leq t}\sigma(\hat \gamma^{exc}(s))$. We have that $\hat \gamma^{exc}(\tau)$ is also a local set, more precisely $\Phi= \Phi^{\hat \gamma^{exc}(\tau)}+\Phi_{\hat \gamma^{exc}(\tau)}$ where conditionally on $\hat \gamma^{exc}(\tau)$ the law of $\Phi^{\hat \gamma^{exc}(\tau)}$ is a {zero boundary} GFF in $D\backslash \hat \gamma^{exc}(\tau)$ and $\Phi_{\hat \gamma^{exc}(\tau)}=\sum_{k}\sigma_k\nu_k \1_{C_k\cap \eta \neq \emptyset}$ (where again the sum is ordered by descending diameter size of clusters).
\end{lemma}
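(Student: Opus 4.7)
The plan is to reduce the strong Markov property at $\tau$ to the deterministic-time Markov property of Theorem~\ref{thm:excdcmp}(2), by approximating $\tau$ from above by discrete-valued stopping times and then taking a monotone limit.

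First I would handle a deterministic $t\in[0,1]$ by applying Theorem~\ref{thm:excdcmp}(2) to $\gamma\vert_{[0,t]}$ (prepending a boundary-to-$\gamma(0)$ arc if $\gamma(0)\notin \partial D$, and subtracting off the resulting extra contribution from the decomposition). This directly yields that $\hat\gamma^{exc}(t)$ is a local set with $\Phi_{\hat\gamma^{exc}(t)}=\sum_{k:\hat C_k\cap\gamma([0,t])\neq\emptyset}\hat\sigma_k\hat\nu_k$ (convergent in $H^{-1-\eps}$), and that conditionally on $\hat\gamma^{exc}(t)$ the residual field $\Phi^{\hat\gamma^{exc}(t)}:=\Phi-\Phi_{\hat\gamma^{exc}(t)}$ is a zero boundary GFF in $D\setminus\hat\gamma^{exc}(t)$. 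Since $\gamma$ is deterministic and each $\hat C_k$ is closed, the whole history $(\hat\gamma^{exc}(s))_{s\leq t}$ and hence $\F_t$ are measurable with respect to $\sigma(\hat\gamma^{exc}(t))$, so the same Markov statement holds conditionally on $\F_t$.

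Next I would approximate $\tau$ by the dyadic stopping times $\tau_n:=2^{-n}\lceil 2^n\tau\rceil\downarrow \tau$, which take only countably many values, and use $\{\tau_n=t\}\in\F_t$ together with the deterministic-time case and the tower property to obtain the strong Markov property at each $\tau_n$: conditionally on $\F_{\tau_n}$, $\Phi^{\hat\gamma^{exc}(\tau_n)}$ is a zero boundary GFF in $D\setminus \hat\gamma^{exc}(\tau_n)$ and $\Phi_{\hat\gamma^{exc}(\tau_n)}$ has the claimed cluster-sum form. Finally, I would take $n\to\infty$: because each $\hat C_k$ is closed and $\gamma$ is continuous, any cluster belonging to $\hat\gamma^{exc}(\tau_n)$ for all large $n$ must already intersect $\gamma([0,\tau])$, so $\hat\gamma^{exc}(\tau_n)\downarrow \hat\gamma^{exc}(\tau)$ cluster-by-cluster; combined with part~(1) of Theorem~\ref{thm:excdcmp} this gives $\Phi_{\hat\gamma^{exc}(\tau_n)}\to \Phi_{\hat\gamma^{exc}(\tau)}$ in $H^{-1-\eps}$, and the local-set property at $\tau$ follows either by direct identification of the limit or via the monotone stability of local sets in Lemma~\ref{lem:BPLS}(3).

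The main obstacle sits in the limiting step. Making precise the ``cluster-by-cluster'' convergence of $\hat\gamma^{exc}(\tau_n)$ in the presence of countably many small clusters, and in particular ruling out that some small cluster tangent to $\gamma$ near $\gamma(\tau)$ appears in every $\hat\gamma^{exc}(\tau_n)$ but not in $\hat\gamma^{exc}(\tau)$, is the delicate point. Local finiteness of $(\hat C_k)$ at every positive scale together with continuity of $\gamma$ and closedness of the $\hat C_k$ should suffice, but this verification—together with checking that the monotone-union framework of Lemma~\ref{lem:BPLS}(3) applies with the correct harmonic boundary values—will be the main technical content.
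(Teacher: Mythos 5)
Your proposal is correct and follows essentially the same route as the paper: the paper's proof is a one-line citation of Lemma 1.3.13 of \cite{Aru} (a strong Markov property for increasing, right-continuous families of local sets), whose content is precisely the reduction to deterministic times via Theorem \ref{thm:excdcmp}(2) followed by the dyadic approximation $\tau_n\downarrow\tau$ that you describe. One small caution: Lemma \ref{lem:BPLS}(3) concerns \emph{increasing} unions of local sets, not decreasing intersections, so in the final limiting step you should rely on your ``direct identification'' route --- the difference $\Phi_{\hat\gamma^{exc}(\tau_n)}-\Phi_{\hat\gamma^{exc}(\tau)}$ is the sum over clusters meeting $\gamma((\tau,\tau_n])$ but not $\gamma([0,\tau])$, which vanishes by local finiteness and the tail control of Lemma \ref{lem:tail} --- rather than on Lemma \ref{lem:BPLS}(3).
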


\begin{proof}
As by the Markov property of the excursion decomposition $(\hat\gamma^{exc}(t))_{t \in [0,1]}$ is a family of increasing local sets, this strong Markov property follows from Lemma 1.3.13 in \cite{Aru} {(in this thesis a continuous process of local sets is considered, but for a right-continuous process like here the proof works equally well).}
\end{proof}

To circumvent some technicalities, we have to tweak this local set further to be able to also explore only a subset of the excursions intersecting the line; this is done so that any non-contractible simple loop around the origin contained in the local set processes, has to be in fact a subset of a single explored non-contractible cluster:

\begin{prop}\label{c.chi_epsilon}
Let $\gamma$ be the straight line segment from $-i$ to $0$ and define $\hat \gamma^{exc}$ to be the closure of the union of all $\hat C_k$ that intersect $\gamma$. For any $\epsilon>0$, there exists a local set $\chi_\epsilon$ that has the following property:
    \begin{enumerate}
        \item It is equal to the {closed} union of certain excursions of $(\hat C_k)_{k \geq 1}$.
        \item It is contained in $\hat \gamma^{exc}$, and contains any cluster $\hat C_k$ that surrounds $0$ with diameter bigger than or equal to $\epsilon$.
        \item if there is a simple loop $\ell \subseteq \chi_\epsilon$ with diameter bigger than $\epsilon$ that surrounds $0$, then there exists $k$ such that $\ell\subseteq \hat C_k$. 
        \item {any simple loop $\ell \subseteq \D$ of diameter at least $\eps$ surrounding the origin has to either be contained in $\chi_\epsilon$ or hit at least two different prime ends of $\D \setminus \chi_\epsilon$.}
    \end{enumerate}
\end{prop}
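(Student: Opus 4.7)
The plan is to construct $\chi_\eps$ as a local set of the exploration $(\hat\gamma^{exc}(t))_{t\in[0,1]}$ along $\gamma$, stopped at a suitable $\F_t$-stopping time $\tau_\eps$, where $\F_t=\bigvee_{s\leq t}\sigma(\hat\gamma^{exc}(s))$. By Lemma~\ref{l:strong_Markov}, $\hat\gamma^{exc}(\tau_\eps)$ is then automatically a local set, equal to the closure of the union of the clusters $\hat C_k$ met by $\gamma$ up to time $\tau_\eps$. I would parametrize $\gamma:[0,1]\to\overline{\D}$ with $\gamma(0)=-i$ and $\gamma(1)=0$. The key observation is that, since the clusters $(\hat C_k)$ are pairwise disjoint closed connected subsets of $\overline{\D}$, the surrounding clusters of $0$ of diameter at least $\eps$ form a nested (by containment of their bounded complements around $0$) and a.s.\ finite collection $\hat C_{k_1}\supset\cdots\supset\hat C_{k_N}$, with $N=N(\eps)<\infty$ by local finiteness, and $\gamma$ must cross each of them in order from outermost to innermost on its way to $0$.

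A natural first choice is simply $\tau_\eps=1$, giving $\chi_\eps:=\hat\gamma^{exc}$, which contains every surrounding cluster. With this definition property (1) is tautological; property (2) is immediate from the fact that $\gamma$ meets every surrounding cluster on its way to $0$; and property (4) is a topological consequence of the fact that $\chi_\eps$ contains $\gamma$ and hence separates $0$ from $\partial\D$, so that any simple loop around $0$ of diameter $\geq\eps$ either lies in $\chi_\eps$ or must cross $\chi_\eps$ at least twice and hence hit two distinct prime ends of $\D\setminus\chi_\eps$. A more economical choice -- stopping $\tau_\eps$ as soon as $\gamma$ has just left the innermost surrounding cluster $\hat C_{k_N}$ of diameter $\geq\eps$ -- can be realized by a recursion of entry/exit stopping times at each discovered surrounding cluster of diameter $\geq\eps$, and produces a smaller $\chi_\eps$ for which property (3) is more tractable.

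The main obstacle is property (3). A priori $\chi_\eps$ is the closure of a countable union of pairwise disjoint closed clusters, and nothing prevents a simple loop $\ell\subseteq\chi_\eps$ from jumping between distinct clusters at an accumulation point of the family $(\hat C_k)$. To exclude this I would argue as follows. Any cluster supporting a simple loop of diameter $\geq\eps$ surrounding $0$ must itself have diameter $\geq\eps$ and surround $0$, so by property (2) it is one of $\hat C_{k_1},\dots,\hat C_{k_N}$. Writing $\ell=\bigcup_k(\ell\cap\hat C_k)\cup R$, where $R$ is the subset of $\ell$ lying only in the accumulation set of $\bigcup_k\hat C_k$, each $\ell\cap\hat C_k$ is closed and the family is pairwise disjoint. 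Using local finiteness (Theorem~\ref{thm:excdcmp}(3)), only finitely many clusters in $\chi_\eps$ have diameter above any fixed threshold, so via a covering/compactness argument along $\ell$ and the simplicity and connectedness of $\ell$, one aims to show that at most one of the $\ell\cap\hat C_{k_j}$ is nonempty and in fact equals $\ell$. The delicate heart of the argument, and what I expect to require the most care, is ruling out a Cantor-type bridging pattern: showing that the accumulation set of clusters of vanishingly small diameter cannot carry a positive-diameter arc of the simple loop $\ell$. This should follow from a quantitative use of local finiteness together with the topological constraint that a simple Jordan arc meeting only accumulation points would be forced into a set of vanishing $1$-dimensional content, contradicting $\mathrm{diam}(\ell)\geq\eps$.
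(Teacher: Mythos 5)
There is a genuine gap, and it sits exactly where you flagged it: property (3). Your plan is to take $\chi_\eps$ to be (essentially) all of $\hat\gamma^{exc}$, i.e.\ the closure of the union of \emph{every} cluster met by $\gamma$ up to some stopping time, and then to argue by a covering/compactness argument that a simple loop of diameter $\geq\eps$ contained in this closure must lie in a single cluster. That last claim is false in general: the closure of a locally finite, pairwise disjoint family of continua can contain many simple loops that are not contained in any single member of the family. The CLE$_4$ carpet is the canonical counterexample --- it is the closure of a locally finite disjoint union of simple loops, yet it contains (for instance) the boundaries of all its complementary components, which are simple loops built out of accumulation points of infinitely many small loops. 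Nothing in local finiteness or in ``vanishing $1$-dimensional content'' rules this out: the accumulation set has Hausdorff dimension well above $1$ and can certainly carry Jordan arcs of positive diameter. Your ``more economical'' variant (stopping once $\gamma$ exits the innermost surrounding cluster of diameter $\geq\eps$) does not help, because it still takes the closed union of \emph{all} clusters met by an initial segment of $\gamma$, so the same Cantor-type bridging can occur.

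The paper's construction is designed precisely to circumvent this. It does not take the full $\hat\gamma^{exc}(\tau)$; instead it stops the exploration along $\gamma$ at the \emph{first} time $\hat\tau^0$ at which the discovered set (together with an initial piece of $\gamma$) disconnects $0$ from $\partial\D$ --- so that strictly before that time no loop around $0$ is present in the explored set, and at time $\hat\tau^0$ the disconnection is caused by the single newly arrived cluster. It then restarts the exploration along a fresh segment of the imaginary axis starting from the topmost point of the previously explored set on that axis, so that consecutive pieces $\hat\gamma^{exc}_{j-1}(\tau^j)$ and $\bigcup_{t<\tau^{j+1}}\hat\gamma^{exc}_j(t)$ intersect in exactly one point; a simple loop cannot pass through a one-point intersection twice, which is what forces any loop around $0$ inside $\chi_\eps$ into a single cluster. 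Your properties (1), (2) and the rough idea for (4) are fine (modulo the fact that $\hat\gamma^{exc}$ need not contain $\gamma$, so the separation statement in (4) also has to be extracted from the construction rather than from $\gamma\subseteq\chi_\eps$), but without the single-point-gluing mechanism the key property (3) is not established.
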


\begin{proof}[Proof of Proposition \ref{c.chi_epsilon}] We construct the set $\chi_\epsilon$ recursively. We first define the stopping time
\begin{align*}
\hat \tau^0:= \inf\{t\in [0,1]: \exists \delta>0, \exists \mathcal O \text{ c.c. of } B(0,1)\backslash (\hat \gamma^{exc}(t)\cup \gamma\mid_{[-i,-\delta i]}) \text{ with } 0\in O \text{ and } \partial {\D} \cap \overline{\mathcal O}=\emptyset\}.   
\end{align*}
{On the event $\hat \tau^0 = 1$, we just set $\chi_\epsilon= \hat \gamma^{exc}$ and by definition of $\hat \tau^0$ all the conditions hold and there is no cluster surrounding the origin. In fact we will see a posteriori that this event has zero probability and there will be some cluster surrounding $0$ with positive probability, but for now this cannot be excluded.}

{We now work on the event $\hat \tau^0 < 1$. Then} $\hat \gamma^{exc}(\tau^1)\backslash \bigcup_{t<\tau^1}\hat \gamma^{exc}(t)$ is equal to a certain excursion $\hat C_k$. If this excursion surrounds $0$ and has diameter smaller than or equal to $\epsilon$ we finish our exploration and define $\chi_\epsilon= \hat \gamma_0^{exc}(\tau^0) := \hat \gamma^{exc}(\tau^0)$; by definition it satisfies the desired conditions.

If the above is not the case, we will continue our exploration as follows. Note that $\hat \gamma^{exc}(\tau^0)$ is a local set and that the set of excursions $(\hat C_k: C_k\cap \hat \gamma^{exc}(\tau^0)=\emptyset)_{k \geq 1}$ generate an excursion decomposition of the GFF $\Phi^{\hat \gamma^{exc}(\tau^0)}$. Let 
\begin{align*}
    \hat x:=\sup\{\Im(x): x \in \hat \gamma^{exc}(\tau^1)\cap \gamma \}
\end{align*}
where $\Im(x)$ denotes the imaginary part of $x$ and consider the line $\gamma_1$ that goes from $-i\hat x$ to $-\epsilon i$. We now define $\hat \tau^1$ as above, but with $\gamma_{1}$ in the role of $\gamma$ - i.e. in words, if a cluster appears such that $\hat \gamma^{exc}(\tau^0)\cup [-i, -\delta i]$ disconnects the origin from the boundary of the domain, we stop and start exploring from the top-most point of $\gamma^{exc}(\tau^0)$ on the imaginary axis. We again stop if an excursion surrounding $0$ with diameter smaller or equal than $\epsilon$ appears or if $\hat \tau^1 = 1$. Denoting this new bit of exploration by $\hat \gamma_1^{exc}(\tau^1)$, we set $\chi_\epsilon = \hat \gamma^{exc}(\tau^0) \cup \hat \gamma_1^{exc}(\tau^1)$. Otherwise we keep on going. 

{If this procedure finishes at a finite step $j$ we set $\chi_\epsilon = \hat \gamma^{exc}(\tau^0) \bigcup_{i=1}^j \hat \gamma_i^{exc}(\tau^i)$ and otherwise we set $\chi_\epsilon = \hat \gamma^{exc}(\tau^0) \overline{\bigcup_{i\geq1} \hat \gamma_i^{exc}(\tau^i)}$. In the latter case we must have that $\tau_j \to 1$ because the set of clusters is locally finite.}

We now need to prove that these sets satisfy the claimed properties. (1) and (2) are clear from construction, as any cluster surrounding $0$ would need to appear at some $\tau_j$ with $\tau_j < 1$.

To see the point $(3)$, we note that by definition there can be no simple loops $\ell$ surrounding $0$ contained in $ \bigcup_{t<\tau_{j+1}}\hat\gamma^{exc}_j(t) \backslash \hat \gamma^{exc}_{j-1}(\tau_{j})$ and furthermore $\left(\bigcup_{t<\tau_j+1}\hat\gamma^{exc}_j(t) \right) \cap \hat \gamma^{exc}_{j-1}(\tau_{j})$ and $ \left(\bigcup_{t<\tau_{j+1}}\hat \gamma^{exc}_j(t)\right) \cap \hat \gamma^{exc}_{j}(\tau_{j+1})$ both have exactly one point and thus there can not be a simple loop going between $\bigcup_{t<\tau_j+1}\hat \gamma^{exc}_j(t)$ and either $\hat \gamma^{exc}_{j}(\tau_{j+1})$ or $\hat \gamma^{exc}_{j-1}(\tau_{j})$. This proves $(3)$ for the case if the procedure finishes at a finite step $j$ and $\tau_j < 1$, in the other case no such loop $\ell$ exists.

{The last property follows from the fact that by construction $\chi_\eps$ either separates the origin from the boundary using a set of diameter less than $\eps$ or contains the origin.}
\end{proof}

\subsection{Proofs of propositions}
Let us start this subsection by proving Proposition \ref{prop:ss}.
\begin{proof}[Proof of Proposition \ref{prop:ss}] It suffices to prove the proposition for any cluster $C_k$ that surrounds $0$. Let $\gamma$ be the straight line segment from $-i$ to $0$ and define $\hat \gamma^{exc}$ to be the closure of the union of all $\hat C_k$ that intersect $\gamma$ as before and consider the local set $\chi_\epsilon$ from Lemma \ref{c.chi_epsilon}.

We now work recursively starting from the outermost cluster surrounding the origin. In this aim, we construct $\A_{-2\lambda,2\lambda}$, say via SLE$_4(-2)$ like in \cite{ASW}, and we consider $k \in \N$ such that the outer boundary of $C_k$ is in $\A_{-2\lambda,2\lambda}$ and surrounds $0$ (i.e. $C_k$ is the outer-most excursion surrounding $0$); we let $L_k\subseteq \A_{-2\lambda,2\lambda}$ denote this outer boundary. We start by showing that $L_k$ is contained in some $\hat C_{\hat k(k)}$.

{Claim 17 of \cite{ASW} implies that if this level line loop $L_k$ intersects $\D \setminus \chi_\epsilon$, it can touch $\chi_\epsilon$ in at most two of its prime ends of (at the start and at the end of the loop). Furthermore, this can be strengthened. Indeed, using the same proof as that of Claim 17 of \cite{ASW}, one can see that the level line $L_k$ can touch only at one prime end: the same proof implies that after the starting point a small enough initial segment of the loop remains only in the vicinity of one single prime end.} 
{Thus by Proposition \ref{c.chi_epsilon} we conclude that on the event that $L_k$ has diameter larger than $\eps$, it has to be contained in one $\hat C_{\hat k(k)}$. By taking $\epsilon \to 0$, we see that this holds almost surely.}

We now show that the whole cluster $C_k$ is contained in $\hat C_{\hat k(k)}$. To do this, recall from the construction in Section \ref{s:existence} that given the outer boundary, the cluster $C_k$ is constructed by taking $\A_{-2\lambda}$ inside the connected component of ${\D}\backslash L_k$ containing $0$. Further, by uniqueness of FPS, we can use the following iterative recipe to construct $\A_{-2\lambda}$: to obtain $A^k$, we sample $\A_{-\lambda,\lambda}$ inside the connected component of ${\D}\backslash A^{k-1}$ that contains $0$, unless the boundary value is already equal to $0$.  We can now use the argument given in \textit{Uniqueness} in Section 6 of \cite{ASW} to see that each $A^k$ is contained in $\hat \gamma^{exc}$. Indeed, the fact that level lines used to construct $\A_{-\lambda,\lambda}$ do not self-intersect and the fact that if they enter any connected component of ${\D}\backslash \hat \gamma^{exc}$ they cannot touch the boundary of $\hat \gamma^{exc}$ {(Lemma 16 of \cite{ASW})} imply that they cannot enter any connected component of ${\D}\backslash \gamma^{exc}$ at all. This concludes the proof that the outermost cluster $C_k$ surrounding $0$ is contained in $\hat C_{\hat k(k)}$.

To show that the next cluster $C_{k'}$ surrounding $0$ is also contained in some excursion $\hat C_{\hat k(k')}$ it suffices to note that the law of $\Phi$ restricted to the connected component $\mathcal O$ containing $0$ of ${\D}\backslash C_k$ is that of a GFF in ${\D}\backslash C_k$. This implies that the restriction of $(\hat C_k,\hat \mu_k, \hat \sigma_k)_{k \geq 1}$ to $\mathcal O$ is also that of an excursion decomposition of that GFF and so we can repeat the above procedure.

\end{proof}

    Next up is Proposition \ref{prop:es}. The idea is to recover the sign of the cluster by using well-chosen positive test functions whose support is contained in a small neighbourhood of the set.
    \begin{proof}[Proof of Proposition \ref{prop:es}] Similarly to above, it suffices to prove the claim for the outermost cluster $C_k$ surrounding $0$. 
    Let $\gamma:[0,1]\mapsto \overline{B(0,1)}$ be a straight line from $-i$ to $0$ and $\hat C_k$ (or $C_k$) be the outermost cluster surrounding $0$. Define
    \begin{align}\label{e:tau}
    \tau:=\inf\{t\in [0,1]: C_k \in \gamma^{exc}(t)\}.\end{align}
Then $\gamma^{exc}(\tau)$ is a local set by Lemma \ref{l:strong_Markov}. Further, by Proposition \ref{prop:ss}, we know that for all $t \in [0,1]$, it holds that $\gamma^{exc}(t)\subseteq \hat \gamma^{exc}(t)$.
    Thus $\hat \gamma^{exc}(\tau)$ is also a local set and it contains the cluster $\hat C_{\hat k(k))}$. Observe that by construction the closure of $\bigcup_{t< \tau} \gamma^{exc}(t)$ intersects $C_k$ only at $\hat x = \gamma(\tau)$ and that the same holds for $\hat C$.
    
   \begin{claim}\label{c:mesure} Let $A \subseteq \hat C_k\backslash \{\hat x\}$ be a closed set of positive diameter that is measurable w.r.t. $\hat \gamma^{exc}(\tau)$. Consider $(f_n: \D \mapsto [0,1])_{n \geq 1}$, a a family of smooth functions all taking value $1$ on $A$ and equal to $0$ for all points of distance at least $2^{-n}$ of $\hat C_k$. Further, assume that conditionally on $\hat \gamma^{exc}( \tau)$, $f_n$ are independent from the GFF $\Phi^{\hat \gamma^{exc}(\hat \tau)}$. 
   
   Then, almost surely if $\hat \nu_k(A)>0$, $\liminf(\Phi, f_n)\geq \hat \nu_k(A)$ and if $\hat \nu_k(A)<0$, $\liminf(\Phi, f_n)\leq -\hat \nu_k(A)$.
   \end{claim}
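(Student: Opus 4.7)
My plan is to decompose $(\Phi, f_n)$ via the strong Markov property of Lemma~\ref{l:strong_Markov} at time $\tau$ and analyse the three resulting contributions: the cluster $\hat C_k$ itself, the other already-discovered excursions, and the undiscovered bulk GFF.

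By Lemma~\ref{l:strong_Markov} the set $\hat\gamma^{exc}(\tau)$ is local, so
$$(\Phi,f_n) \;=\; (\Phi^{\hat\gamma^{exc}(\tau)},f_n) \;+\; \hat\sigma_k(\hat\nu_k,f_n) \;+\; \sum_{\substack{j\neq k \\ \hat C_j\cap\gamma([0,\tau])\neq\emptyset}}\hat\sigma_j(\hat\nu_j,f_n),$$
where, conditionally on $\hat\gamma^{exc}(\tau)$ and on the random functions $(f_n)$, $\Phi^{\hat\gamma^{exc}(\tau)}$ is a zero-boundary GFF on $\D\setminus\hat\gamma^{exc}(\tau)$. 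The cluster term is handled directly: since $f_n \equiv 1$ on $A$, $f_n\ge 0$ and $\hat\nu_k$ is a positive measure supported on $\hat C_k$, one has the pointwise-in-$n$ bound $(\hat\nu_k,f_n)\ge \hat\nu_k(A)$, so $\hat\sigma_k(\hat\nu_k,f_n) \ge \hat\sigma_k\hat\nu_k(A)$, which already produces the lower (resp.\ upper) bound demanded by the claim in the case $\hat\sigma_k=+1$ (resp.\ $-1$).

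For the sum over $j\neq k$: by the disjointness of the clusters $(\hat C_j)$ each such $\hat C_j$ lies at positive distance from the closed set $\hat C_k$, and local finiteness ensures that for any $\eps_0>0$ only finitely many of them have diameter $\ge\eps_0$; hence for $n$ large enough (depending on the local-set realisation) $f_n$ vanishes on all of these, leaving only a tail over clusters of diameter $<\eps_0$. This tail is a sub-sum of the $H^{-1-\eps'}$-convergent excursion series, and pairing it with $f_n$ (which has $\|f_n\|_\infty\le 1$ and shrinking support) gives, in the spirit of Lemma~\ref{lem:tail}, an error that can be made arbitrarily small by shrinking $\eps_0$.

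Finally, for the GFF term, conditionally on $\hat\gamma^{exc}(\tau)$ and $f_n$ this is a centred Gaussian of variance bounded by $\iint_{S_n\times S_n}G^{\D}(x,y)\,dx\,dy$, where $S_n=\mathrm{supp}(f_n)\subseteq B(\hat C_k,2^{-n})$. Chebyshev already yields convergence in probability to $0$, but the main obstacle is upgrading this to the almost-sure $\liminf$ inequality the claim calls for. The natural route is a conditional Borel--Cantelli argument, which demands summable-in-$n$ variances, and hence polynomial decay $|B(\hat C_k,2^{-n})|=O(2^{-n\delta})$ for some $\delta>0$. Such a Minkowski-dimension bound on $\hat C_k$ is not postulated among the hypotheses of Theorem~\ref{thm:excdcmp}, so it has to be extracted from them---most plausibly by using Proposition~\ref{prop:ss} to sandwich $\hat C_k$ between iterates of the two-valued sets $\A_{-2\lambda,2\lambda}$, whose Minkowski dimensions are known to be strictly less than $2$.
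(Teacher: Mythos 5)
Your decomposition and the treatment of the first two terms are exactly the paper's route: the paper writes $(\Phi,f_n)=(\Phi_{\hat \gamma^{exc}(\tau)},f_n)+(\Phi^{\hat \gamma^{exc}(\tau)},f_n)$, observes that for $n$ large the support of $f_n$ misses $\overline{\bigcup_{t<\tau}\hat\gamma^{exc}(t)}$ so that the harmonic-part pairing reduces to $\hat\sigma_k(\hat\nu_k,f_n)\geq \hat\sigma_k\hat\nu_k(A)$, and then kills the remaining term by noting that its conditional variance given $\hat\gamma^{exc}(\tau)$ tends to $0$. (Two small remarks on your version of the middle step: disjointness plus local finiteness is indeed the mechanism, but pairing the $H^{-1-\eps}$-small tail with $f_n$ is not a legitimate bound, since $\|f_n\|_{H^{1+\eps}}$ blows up as the support shrinks; the correct statement is the geometric one, that the support of $f_n$ eventually avoids the previously discovered clusters away from $\hat x$.)

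The genuine problem is your last step. You are right that the variance bound only gives convergence in probability and that a conditional Borel--Cantelli upgrade needs summable variances, hence a quantitative volume bound on $B(\hat C_k,2^{-n})\setminus \hat\gamma^{exc}(\tau)$ (note the variance integral runs only over $\D\setminus\hat\gamma^{exc}(\tau)$, so it tends to $0$ by dominated convergence with no dimension input --- what is missing is only a \emph{rate}). But your proposed source for that rate does not work: Proposition \ref{prop:ss} gives $C_k\subseteq \hat C_{\hat k(k)}$, i.e.\ it bounds the unknown cluster $\hat C_{\hat k(k)}$ from \emph{below} by a set of dimension $<2$, which is useless for an upper Minkowski bound on $\hat C_{\hat k(k)}$. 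The reverse containment is Proposition \ref{prop:eq}, which is proved \emph{after} and \emph{using} Proposition \ref{prop:es}, hence after this claim --- so that route is circular. The paper does not attempt the almost-sure upgrade at all: it contents itself with the conditional variance tending to $0$, which (e.g.\ by passing to a subsequence chosen measurably with respect to $\hat\gamma^{exc}(\tau)$ and the $f_n$ along which the conditional variances are summable) is all that is needed to compare the two signs in the proof of Proposition \ref{prop:es}. So either prove the claim along such a subsequence, or justify why convergence in probability of the bulk term suffices for the application; as written, your argument stops at an unproved and, by your proposed method, unprovable-at-this-stage dimension estimate.
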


Before proving the claim, let us see how it implies the proposition. First, as the claim holds for clusters of any excursion decomposition satisfying the conditions of Theorem \ref{thm:excdcmp}, it holds in particular also for the one constructed via CLE$_4$ and FPS in the previous section, i.e. if we omit all the hats on $C$-s in the statement. 

Further, as $\gamma^{exc}(\tau)$ is a local set contained in $\hat \gamma^{exc}(\tau)$, it is conditionally independent of $\Phi^{\hat\gamma^{exc}(\tau)}$. We can now apply the claim with a closed set $A \subseteq C_k \setminus \{\hat x\}$, and functions $f_n$ chosen depending only on $\hat\gamma^{exc}(\tau)$ and $\Phi_{\hat\gamma(\tau)}$ twice (once for $C_k$, once for $\hat C_{\hat k(k)}$) to obtain the proposition.

It remains to argue the claim.
    \begin{proof}[Proof of Claim \ref{c:mesure}] We can use the local set property of $\hat \gamma^{exc}(\tau)$ to write \begin{align*}
    (\Phi,f_n)=(\Phi_{\hat \gamma^{exc}( \tau)},f_n) + (\Phi^{\hat \gamma^{exc}(\tau)},f_n).\end{align*}
    By conditioning on $\hat \gamma^{exc}( \tau)$ we can see that the variance of $(\Phi^{\hat \gamma^{exc}(\hat \tau)},f_n)$ goes to $0$. We conclude by noting that there exists an $n$ such that the support of $f_n$ does not intersect the closure of $\bigcup_{t<\hat \tau} \hat \gamma^{exc}(t)$.
    \end{proof}
    \end{proof}

We now turn to Proposition \ref{prop:ws} and start with a preliminary lemma.
  \begin{lemma}\label{l.no_inside}
For any connected component $\mathcal O$ of ${\D}\backslash C_k$ that does not contain $\partial {\D}$ we have that 
$\mathcal O \cap \hat C_{\hat k(k)}=\emptyset$. 
In particular, if $\ell_k$ is the outer boundary of $C_k$ and $\mathcal O'$ is the interior, then $\closure{\hat C_{\hat k(k)} \cap \mathcal O'} = C_k$.
   \end{lemma}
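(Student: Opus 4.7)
The plan is to argue by contradiction, combining Propositions \ref{prop:ss} and \ref{prop:es} with the connectedness of the hat clusters and the Rademacher independence of the signs in the construction of Section \ref{s:existence}. Suppose, for contradiction, that with positive probability there is a connected component $\mathcal O$ of $\D\setminus C_k$ not containing $\partial\D$---a ``hole'' of $C_k$---with $\hat C_{\hat k(k)}\cap \mathcal O\neq\emptyset$, and pick $z^*\in \hat C_{\hat k(k)}\cap \mathcal O$.

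First I would record two structural observations. Since $\partial\mathcal O\subseteq C_k\subseteq \hat C_{\hat k(k)}$ and hat clusters are pairwise disjoint by Theorem \ref{thm:excdcmp}(3), any hat cluster $\hat C_i$ with $i\neq \hat k(k)$ that meets $\overline{\mathcal O}$ must be entirely contained in $\mathcal O$. Moreover, $C_k$ is a local set whose harmonic extension vanishes inside each hole: the contribution $2\lambda\sigma_k$ coming from $\ell_k$ inside $\Int(\ell_k)$ is exactly cancelled by the FPS contribution $-2\lambda\sigma_k$, so $\Phi|_{\mathcal O}$ has the law of a fresh zero-boundary GFF $\tilde\Phi$ in $\mathcal O$. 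The recursive construction of Section \ref{s:existence} then shows that the excursion decomposition of $\tilde\Phi$ embeds as a sub-family of the overall $(C_j,\sigma_j,\nu_j)_{j\geq 1}$, and in particular the i.i.d.\ Rademacher signs attached to the nested CLE$_4$ loops inside $\mathcal O$ are independent of $\sigma_k$.

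The key step is to produce a CLE$_4$ loop $\ell'\subseteq \mathcal O$ surrounding $z^*$ whose attached sign equals $-\sigma_k$. Granted such an $\ell'$, let $C'$ be the sign cluster inside it and $\sigma'=-\sigma_k$ its sign. Proposition \ref{prop:ss} gives $C'\subseteq \hat C_{\hat k(C')}$, and Proposition \ref{prop:es} gives $\hat\sigma_{\hat k(C')}=\sigma'=-\sigma_k\neq \sigma_k=\hat\sigma_{\hat k(k)}$, so the hat clusters $\hat C_{\hat k(C')}$ and $\hat C_{\hat k(k)}$ are distinct and therefore disjoint. Consequently $\ell'\subseteq C'\subseteq \hat C_{\hat k(C')}$ does not meet $\hat C_{\hat k(k)}$. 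On the other hand $\hat C_{\hat k(k)}$ is connected and contains both $z^*\in \Int(\ell')$ and $C_k\subseteq \D\setminus\overline{\Int(\ell')}$; since $\ell'$ is a simple closed loop separating these regions, $\hat C_{\hat k(k)}\cap\ell'\neq\emptyset$, a contradiction.

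The main obstacle will be the measurable selection of the loop $\ell'$ around the random point $z^*$: because $z^*$ may in principle depend on the signs through $\hat C_{\hat k(k)}$, one cannot naively invoke the Rademacher structure to produce a loop of opposite sign. I would resolve this by considering the whole sequence of nested CLE$_4$ loops around $z^*$ inside $\mathcal O$: for any fixed point, almost surely there are infinitely many of them carrying each sign, and a Fubini-type argument combined with the fact that the exceptional set $\{z\in\mathcal O:\text{every nested loop around }z \text{ carries sign }\sigma_k\}$ is a much thinner random fractal than the hat cluster $\hat C_{\hat k(k)}$ should force the existence of a ``good'' point inside $\hat C_{\hat k(k)}\cap \mathcal O$. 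Once the first assertion is established, the ``In particular'' statement is immediate: the connected components of $\mathcal O'\setminus C_k$ are exactly the holes of $C_k$ inside its outer loop, so $\hat C_{\hat k(k)}\cap \mathcal O'\subseteq C_k$, and combined with $C_k\subseteq \hat C_{\hat k(k)}$ together with $\ell_k\subseteq C_k$ this yields $\closure{\hat C_{\hat k(k)}\cap \mathcal O'}=\closure{C_k\cap \mathcal O'}=C_k$.
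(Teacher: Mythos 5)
Your approach is genuinely different from the paper's, and its geometric core is sound: if you could exhibit a loop $\ell'\subseteq\mathcal O$ of the nested decomposition carrying sign $-\sigma_k$ and separating a point of $\hat C_{\hat k(k)}\cap\mathcal O$ from $\partial\mathcal O$, then Propositions \ref{prop:ss} and \ref{prop:es} would place $\ell'$ inside a hat cluster of sign $-\sigma_k$, disjoint from $\hat C_{\hat k(k)}$, and connectedness of $\hat C_{\hat k(k)}$ (which contains both $C_k\supseteq\partial\mathcal O$ and a point of $\Int(\ell')$) would give a contradiction. The problem is exactly the step you flag as the ``main obstacle'', and your proposed resolution does not work. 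The Fubini argument only shows that the set $E$ of points of $\mathcal O$ all of whose nested loops carry sign $\sigma_k$ has zero Lebesgue measure; but $\hat C_{\hat k(k)}\cap\mathcal O$ also has zero Lebesgue measure (indeed nothing at this stage of the uniqueness proof prevents it from being a single point), so ``Lebesgue-a.e.\ point is good'' gives no information about whether $\hat C_{\hat k(k)}\cap\mathcal O$ contains a good point. Worse, $E$ is almost surely \emph{nonempty}: each hole at each level of the iteration contains infinitely many next-level loops with i.i.d.\ fair signs, hence infinitely many of sign $\sigma_k$, so one can extract an infinite nested chain of same-sign loops and the intersection of the closed interiors is a nonempty compact set. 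Since you have no a priori lower bound on the size, dimension, or structure of $\hat C_{\hat k(k)}\cap\mathcal O$ (it comes from an arbitrary decomposition satisfying the axioms, not from the explicit construction), the comparison ``$E$ is a much thinner fractal than the hat cluster'' cannot be made, and the selection of $z^*$ and $\ell'$ remains unjustified. This is a genuine gap, not a presentational one.

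For contrast, the paper avoids signs-versus-geometry entirely and uses the local set calculus: with $\tau$ the time at which $C_k$ appears in the exploration $\gamma^{exc}$, one has $\mathcal O\cap\hat\gamma^{exc}(\tau)=\mathcal O\cap\hat C_{\hat k(k)}$, which by Lemma \ref{lem:BPLS}(2) is a local set of $\Phi^{\gamma^{exc}(\tau)}$ restricted to $\mathcal O$ whose harmonic function/measure is nonnegative (here Proposition \ref{prop:es} enters, to know the sign of $\hat\nu_{\hat k(k)}$ on this piece); the first part of Proposition 4.5 of \cite{ALS1} then forces such a local set to be empty. If you want to keep a sign-based argument you would need some substitute for this ``a signed local set inside a hole must vanish'' input, and I do not see how to get it from the Rademacher structure alone.
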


    \begin{proof}
We show it for the connected component containing $0$.
        Define $\tau$ as in the proof of Proposition \ref{prop:es}. Note that $\mathcal O$ is also the connected component of ${\D}\backslash \gamma^{exc}(\tau)$ that contains $0$. 
        Because $C_k\subseteq \hat C_{\hat k(k)}$ we have that $\mathcal O \cap \hat \gamma^{exc}(\tau)$ is equal to $\mathcal O \cap \hat C_{\hat k(k)}$. Lemma \ref{lem:BPLS} (2) then implies that $\mathcal O \cap \hat C_{\hat k(k)}$ is a local set of $\Phi^{\gamma^{exc}(\tau)}|_{\mathcal O}$. Conditionally on $\gamma^{exc}$ and the sign of the cluster $C_k$ (WLOG we assume it +1), we see that restricted to $\mathcal O$, $(\Phi^{\gamma^{exc}(\tau)})_{\hat C_{\hat k(k)}}\geq 0$. The first part of Proposition 4.5 of \cite{ALS1} now implies that $\mathcal O \cap \hat C_{\hat k(k)}$ has to be empty.
        
        The second statement follows directly.
    \end{proof}

We can now prove Proposition \ref{prop:ws}, which is maybe the trickiest of the four.

 \begin{proof}[Proof of Proposition \ref{prop:ws}]
 By Proposition \ref{prop:ss} we know that for each $C_k$, there is some $\hat k$ with $C_k \subseteq \hat C_{\hat{k}}$.
    We start by showing that the signs $\sigma_k, \sigma_{k'}$  are independent even when we further condition on the event $E_{k, k'}$ that they do not belong to the same cluster of $(\hat C_k)_{k\geq 1}$, i.e. on the event,
    $$E_{k, k'} = \{\hat k(k) \neq \hat k(k')\}.$$
This is formalized by the following lemma    

\begin{lemma}\label{c:covariance_different_cluster}    We have that
    $$\E\left[(\sigma_{k}\nu_{k}, 1)(\sigma_{k'}\nu_{k'}, 1)1_{E_{k k'}}\right] = 0.$$
    \end{lemma}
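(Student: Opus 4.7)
The plan is to use the Markov property of the $\hat{}$ decomposition to reveal the cluster $\hat{C}_{\hat{k}(k)}$ (which contains $C_k$ by Proposition~\ref{prop:ss}) while keeping $\hat{C}_{\hat{k}(k')}$ hidden inside a complementary component, and then to invoke the already-established sign independence of the original $C$ decomposition from Theorem~\ref{thm:prop}(2).

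Concretely, I would fix a countable family $(\gamma^{(n)})_{n\geq 1}$ of smooth simple curves from $\partial\D$ to interior points of $\D$ (e.g.\ with rational endpoints) and, for each $n$, let $\hat\gamma^{(n),exc}$ be the closure of the union of those $\hat{C}_j$ that meet $\gamma^{(n)}$; this is a local set of $\Phi$ by the Markov property of the $\hat{}$ decomposition. Define
\begin{equation*}
F_n := \{\gamma^{(n)} \text{ meets } C_k\} \cap \bigl\{\hat{C}_{\hat{k}(k')} \subseteq \mathcal{O}_n \text{ for some component } \mathcal{O}_n \text{ of } \D\setminus \hat\gamma^{(n),exc}\bigr\}.
\end{equation*}
On $E_{k,k'}$ the two clusters $\hat{C}_{\hat{k}(k)}$ and $\hat{C}_{\hat{k}(k')}$ are disjoint closed connected subsets of $\closure\D$, so there exists a simple curve from the boundary that meets $\hat{C}_{\hat{k}(k)}$ (hence $C_k\subseteq\hat{C}_{\hat{k}(k)}$) and leaves $\hat{C}_{\hat{k}(k')}$ in a complementary component; the countable family can therefore be chosen so that $E_{k,k'}\subseteq\bigcup_n F_n$ up to a null set.

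On $F_n$, set $\F_n := \sigma(\hat\gamma^{(n),exc},\Phi_{\hat\gamma^{(n),exc}})$. By the Markov property of the $\hat{}$ decomposition, conditionally on $\F_n$ the residual field $\Phi^{\hat\gamma^{(n),exc}}$ restricted to $\mathcal{O}_n$ is an independent zero-boundary GFF in $\mathcal{O}_n$. Since the $C$ decomposition is $\Phi$-measurable (Theorem~\ref{thm:prop}(1)), its restriction $(C_j,\nu_j,\sigma_j)_{j:\,C_j\subseteq \mathcal{O}_n}$ is a valid excursion decomposition of this conditional GFF in the sense of Theorem~\ref{thm:excdcmp}, and Theorem~\ref{thm:prop}(2) applied to this sub-GFF says that the signs $(\sigma_j)_{j:\,C_j\subseteq\mathcal{O}_n}$ are i.i.d.\ Rademacher random variables, independent of $\F_n$ and of the clusters/measures inside $\mathcal{O}_n$. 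On the other hand, $\sigma_k\nu_k$ is supported on $C_k\subseteq\hat\gamma^{(n),exc}$ and, together with the event $F_n$ itself, is $\F_n$-measurable (the complementary GFF $\Phi^{\hat\gamma^{(n),exc}}$ does not contribute to any test function concentrated near $C_k$). Conditioning therefore yields
\begin{equation*}
\E\bigl[\sigma_k\nu_k(\D)\,\sigma_{k'}\nu_{k'}(\D)\,\mathbf{1}_{F_n}\bigr] = \E\Bigl[\sigma_k\nu_k(\D)\,\nu_{k'}(\D)\,\mathbf{1}_{F_n}\,\E\bigl[\sigma_{k'}\,\big|\,\F_n,(C_j,\nu_j)_{C_j\subseteq\mathcal{O}_n}\bigr]\Bigr] = 0,
\end{equation*}
and a standard disjointification $F_n\setminus\bigcup_{m<n}F_m$ together with summation over $n$ gives the lemma.

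The main technical obstacle is justifying the two measurability claims: that $\sigma_k\nu_k\cdot\mathbf{1}_{F_n}$ is $\F_n$-measurable, and that the restriction of the $C$ decomposition to $\mathcal{O}_n$ is an excursion decomposition of the conditional GFF in $\mathcal{O}_n$ in the sense of Theorem~\ref{thm:excdcmp}. Both follow from careful accounting with the local-set formalism (using that $\Phi^{\hat\gamma^{(n),exc}}$ is a zero-boundary GFF in $\D\setminus\hat\gamma^{(n),exc}$ and hence pairs trivially with test functions supported away from this complement) and from the fact that each condition (1)--(3) of Theorem~\ref{thm:excdcmp} is inherited under restriction to a connected component of the complement of a local set.
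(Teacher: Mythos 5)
Your overall strategy is the same as the paper's: reveal the $\hat{\ }$-clusters along a curve as a local set so that one of $C_k,C_{k'}$ is separated into a complementary component $\mathcal O_n$, and then kill the expectation by the conditional sign symmetry of the hidden excursion. The gap is in how you justify that symmetry. You argue that the restriction of the $C$-decomposition to $\mathcal O_n$ ``is a valid excursion decomposition of this conditional GFF in the sense of Theorem~\ref{thm:excdcmp}'' because conditions (1)--(3) are inherited under restriction, and you then apply Theorem~\ref{thm:prop}(2) to it. But Theorem~\ref{thm:prop}(2) is only established for the \emph{constructed} (CLE$_4$ + FPS) decomposition; for an arbitrary decomposition satisfying the axioms of Theorem~\ref{thm:excdcmp}, sign independence is a consequence of the uniqueness statement --- and this lemma is an ingredient of Proposition~\ref{prop:ws}, which is itself an ingredient of that uniqueness proof. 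As written, the step is circular. What is actually needed, and what the paper proves as a separate claim inside this lemma's proof, is the stronger statement that the restriction of the constructed decomposition to $\mathcal O_n$ \emph{coincides with the canonical construction applied to the residual field} $\Phi^{\hat\gamma^{exc}}$ on $\mathcal O_n$: one shows that the closed union of the outer boundaries of the restricted outermost clusters is a thin local set of $\Phi^{\hat\gamma^{exc}}|_{\mathcal O_n}$ with harmonic function $\pm2\lambda$ (using Lemma~\ref{lem:BPLS}(2) and the Minkowski-dimension bound for $\A_{-2\lambda,2\lambda}$), and then invokes the \emph{previously known} uniqueness of two-valued local sets (Theorem~\ref{thm:tvs}) to identify it with $\A_{-2\lambda,2\lambda}$ of the residual field. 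Only after this identification does the i.i.d.\ Rademacher property of the hidden sign follow ``by construction'' without circularity.

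A secondary, more technical issue: you assert that $\sigma_k\nu_k\,\mathbf 1_{F_n}$ is measurable with respect to $\F_n=\sigma(\hat\gamma^{(n),exc},\Phi_{\hat\gamma^{(n),exc}})$. The harmonic part $\Phi_{\hat\gamma^{(n),exc}}$ is the \emph{sum} of the explored $\hat{\ }$-excursions, and a single sub-excursion $\sigma_k\nu_k$ of the other decomposition is not obviously recoverable from it (nor is the event $F_n$, which depends on $C_k,C_{k'}$ and hence on all of $\Phi$). This can be repaired by conditioning on a larger $\sigma$-algebra that also contains the residual field outside $\mathcal O_n$ --- the conditional sign symmetry inside $\mathcal O_n$ survives this enlargement --- which is in effect how the paper organizes its two-case computation, keeping only $\sigma_{k'}(\nu_{k'},1)$ inside the inner conditional expectation. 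Your covering by a countable family of curves with disjointification is a fine alternative to the paper's stopping-time construction and is not where the difficulty lies.
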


Using this lemma, we can argue that the function $k \to \hat{k}(k)$ is injective. Indeed, for any points $z_1, \dots, z_n$ and any nesting levels $j_1, \dots, j_n$, there is some $N$ such that all clusters surrounding these points up to these nesting levels are contained in the first $N$ clusters when ordered by the decreasing size of diameter. We can now write \begin{equation}\label{eq:yes}
\E\left[\left(\sum_{k=1}^N(\sigma_k \nu_k, 1)\right)^2\right] = \sum_{k = 1}^N \E\left[(\nu_k, 1)^2\right].
\end{equation}
Using the lemma we can alternatively write the LHS as:
$$\E\left[\left(\sum_{k=1}^N(\sigma_k \nu_k, 1)\right)^2\right] = \sum_{k = 1}^N \E\left[(\nu_k, 1)^2\right] + \sum_{k \neq k'} \E\left[(\sigma_{k}\nu_{k}, 1)(\sigma_{k'}\nu_{k'}, 1)1_{E^c_{k,k'}}\right].$$
But on the event $E^c_{(z,j),(w,h)}$, we have that $\sigma_{k} = \sigma_{k'}$ and thus all the terms in the second sum are non-negative. But then they have to actually be equal to zero by \eqref{eq:yes}. As this holds for any collection of $z_1, \dots, z_n$ and any nesting heights, and all clusters can be listed this way, we obtain that $k \to \hat{k}(k)$ is injective.

It remains to prove the lemma.

  \begin{proof}[Proof of Lemma \ref{c:covariance_different_cluster}]
In this proof, it will be useful to denote clusters using the point they surround and their level of nesting as follows: for $z\in \D$ and $j\in \N$, we denote $C_{z,j}$ denote respectively the $j-$th outermost cluster that surrounds $z$. So we now fix $z,z' \in \D$ and $j,j'\in \N$ and denote $k=k(z,j)$ and $k'=k(z',j')$ the $k$ and $k'$ such that $C_k=C_{z,j}$ and $C_{k'}=C_{z',j'}$ respectively. We remark that all clusters $C_k$ can be listed by considering only dyadic $z \in \D$. 
    
    We first prove the lemma when both clusters are outer-most, i.e. when $j=j'=1$. Consider the local set $\hat \gamma^{exc}(\tau_z \wedge \tau_{z'})$ along a line segment $\gamma$ from the boundary to $z$ and then to $w$, stopped at time $\tau_z\wedge \tau_w$,  when either a cluster of the decomposition $(\hat C_k)_{k \geq 1}$ around $z$ or $w$ appears. Assume, WLOG that it is $\hat C_{z,1}$ that appears. In that case, and on the event  $E_{k, k'}$, we have that $C_{z',1}\cap \hat C_{z,1}=\emptyset$. Denote by $\mathcal O'$ the connected component of $\D\backslash\hat \gamma^{exc}(\tau_z \wedge \tau_{z'})$ that contains $z'$. Further, we define $\mathcal O$ as follows. 
    \begin{enumerate}
        \item Either $C_{z,1}\subseteq \hat C_{z,1} $, in which case we define $\mathcal O=\emptyset$; 
        \item or $C_{z,1}\cap \hat C_{z,1} =\emptyset$, in which case we define $\mathcal O$ as the connected component of $\D\backslash\hat \gamma^{exc}(\tau_z \wedge \tau_{z'})$ that contains $z$.
    \end{enumerate}
 Note that in both cases $\mathcal O \cap \mathcal O'=\emptyset$. We further claim the following. 
 
 \begin{claim}
 Let $A$ be the closure of the union of the outer-most boundaries of the outer-most clusters of $(C_k)_{k\geq 1}$ that are contained in either $\mathcal O$ or $\mathcal O'$.  Then a.s. $A$ restricted to both $\mathcal O$ or $\mathcal O'$ is equal to $\A_{-2\lambda,2\lambda}$  of $\Phi^{\hat \gamma^{exc}(\tau_z \wedge \tau_{z'})}$ restricted to $\mathcal O$ or $\mathcal O'$, respectively.
 \end{claim}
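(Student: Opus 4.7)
My plan is to reduce the claim to the uniqueness of the two-valued local set $\A_{-2\lambda,2\lambda}$ (Theorem \ref{thm:tvs}), applied to the independent zero-boundary GFF obtained on $\mathcal O$ (resp.\ $\mathcal O'$) via the strong Markov property (Lemma \ref{l:strong_Markov}).

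First I would identify $A\cap \mathcal O$ with $\A_{-2\lambda,2\lambda}(\Phi)\cap\mathcal O$. By Theorem \ref{thm:prop}(4) combined with Theorem \ref{thm:CLE4_coupling}, the outer boundaries of the outermost clusters $(C_k)_{k \geq 1}$ form exactly the CLE$_4$ carpet, which is $\A_{-2\lambda,2\lambda}(\Phi)$. Using Proposition \ref{prop:ss} (so $C_k \subseteq \hat C_{\hat k(k)}$) and the fact that $S := \hat \gamma^{exc}(\tau_z \wedge \tau_{z'})$ is a union of \emph{complete} $\hat C$-clusters together with an initial segment of $\gamma$, each outermost $C_k$ is either entirely contained in $S$ (when $\hat C_{\hat k(k)}$ has already been explored) or entirely contained in some fixed connected component of $\D\setminus S$; consequently each CLE$_4$ loop of $\A_{-2\lambda,2\lambda}(\Phi)$ is either inside $S$ or disjoint from $S$. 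This shows $A\cap\mathcal O=\A_{-2\lambda,2\lambda}(\Phi)\cap\mathcal O$, and likewise for $\mathcal O'$.

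Next I apply Lemma \ref{lem:BPLS}(2) to the two local sets $\A_{-2\lambda,2\lambda}(\Phi)$ and $S$. Since both are measurable functions of $\Phi$, the required conditional independence given $\Phi$ is automatic; hence $\A_{-2\lambda,2\lambda}(\Phi)\setminus S$ is a local set of $\Phi^S$. By Lemma \ref{l:strong_Markov}, conditionally on $S$ the field $\Phi^S$ is a zero boundary GFF on $\D\setminus S$ decomposing into independent zero boundary GFFs on each connected component. Restricting to $\mathcal O$, the set $A\cap\mathcal O=(\A_{-2\lambda,2\lambda}(\Phi)\setminus S)\cap\mathcal O$ is a local set of $\Phi^S|_{\mathcal O}$.

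To close the argument via Theorem \ref{thm:tvs} I must verify that $A\cap\mathcal O$ is thin and of bounded type with harmonic-function values $\pm 2\lambda$ in every connected component of $\mathcal O\setminus(A\cap\mathcal O)$. Thinness and bounded type are inherited from the corresponding properties of $\A_{-2\lambda,2\lambda}(\Phi)$ by the restriction. The $\pm 2\lambda$ values follow because on the interior of an outermost loop $\ell_k \subseteq \mathcal O$ the harmonic function of $\Phi_{\A_{-2\lambda,2\lambda}(\Phi)}$ is constant equal to $\pm 2\lambda$, and this value is unaffected when one subtracts the contribution from $S$ supported outside $\overline{\mathcal O}$. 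Applying uniqueness of $\A_{-2\lambda,2\lambda}$ for the GFF $\Phi^S|_{\mathcal O}$ yields the desired equality $A\cap\mathcal O=\A_{-2\lambda,2\lambda}(\Phi^S|_{\mathcal O})$, and symmetrically for $\mathcal O'$.

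The main obstacle I expect is controlling the connected components of $\mathcal O \setminus A$: one must rule out the possibility that some component of $\mathcal O \setminus A$ is a ``boundary strip'' of $\mathcal O$ not contained in the interior of any CLE$_4$ loop lying inside $\mathcal O$, and verify the harmonic-value computation at the (fractal) interface $\partial \mathcal O \cap \partial I_{\ell_k}$. This is handled using the a.s.\ local finiteness and the a.s.\ fact that around every point of $\mathcal O$ the nested CLE$_4$ loops shrink to zero diameter, so eventually they lie inside $\mathcal O$, together with the agreement-at-positive-distance statement of Lemma \ref{lem:BPLS}(2) for the harmonic extensions of the unions of local sets.
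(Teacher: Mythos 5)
Your proposal is correct and follows essentially the same route as the paper: identify $A$ restricted to $\mathcal O\cup\mathcal O'$ with the loops of $\A_{-2\lambda,2\lambda}(\Phi)$ lying there, use Lemma \ref{lem:BPLS}(2) to see that $A$ is a thin local set of $\Phi^{\hat\gamma^{exc}(\tau_z\wedge\tau_{z'})}$ with harmonic function $\pm2\lambda$ on the relevant components, and conclude by uniqueness of two-valued sets (Theorem \ref{thm:tvs}). One small correction: the hatted decomposition, and hence $\hat\gamma^{exc}(\tau_z\wedge\tau_{z'})$, is \emph{not} assumed measurable with respect to $\Phi$, so the conditional independence required in Lemma \ref{lem:BPLS}(2) should be justified by the $\Phi$-measurability of $\A_{-2\lambda,2\lambda}(\Phi)$ alone (which suffices), not of ``both'' sets.
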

   
 \begin{proof}[Proof of the claim.]
  {
  The set $A\cup \hat \gamma^{exc}(\tau_z \wedge \tau_{z'})$ is a local set that, restricted to $\mathcal O \cup \mathcal O'$ is a.s. equal to $\A_{-2\lambda,2\lambda} \cup \hat \gamma^{exc}(\tau_z \wedge \tau_{z'})$. This is because by construction in Section \ref{s:existence} the collection outer-most boundaries of the outermost loops of $C_k$ is equal to the collection of loops of $\A_{-2\lambda,2\lambda}$.}
  
 { Further, by Lemma \ref{lem:BPLS}, have that for any $x\in \mathcal O' \cap D\backslash (\A_{-2\lambda,2\lambda}\cup \hat \gamma^{exc}(\tau_z \wedge \tau_{z'}) $, the harmonic function $\Phi_{\hat \gamma^{exc}(\tau_z \wedge \tau_{z'})}$ is equal to $\pm 2\lambda$. Thus by (2) Lemma \ref{lem:BPLS}, we see that $A$ is a local set of $\Phi^{\hat \gamma^{exc}(\tau_z \wedge \tau_{z'})}$ restricted to $\mathcal O'$ is thin (because as a subset of $\A_{-2\lambda, 2\lambda}$ of $\Phi$, its Minkowski dimension is smaller than 2) and its harmonic function is equal to $\pm 2\lambda$. We conclude using the uniqueness of TVS, Theorem \ref{thm:tvs}, that it is equal to $\A_{-2\lambda, 2\lambda}$ of $\Phi^{\hat \gamma^{exc}(\tau_z \wedge \tau_{z'})}$ restricted to $\mathcal O'$. The same argument holds for $\mathcal O$, in case it is nonempty.}
 \end{proof}

    We now notice that $A$ is measurable w.r.t. $\Phi^{\hat \gamma^{exc}(\tau_z \wedge \tau_{z'})}$, and  that conditionally on $\hat \gamma^{exc}(\tau_z \wedge \tau_{z'})$, $\Phi^{\hat \gamma^{exc}(\tau_z \wedge \tau_{z'})}$ is independent of $\Phi_{\hat \gamma^{exc}(\tau_z \wedge \tau_{z'})}$. So now, if we are on the case (1)
    \begin{align*}
    &\E\left[\sigma_{k} \sigma_{k'} (\nu_k,1) (\nu_{k'},1) \1_{E_{k,k'}}\1_{(1)}\right]\\ 
    &=\E\left[\sigma_{k} (\nu_k,1)\E\left[\sigma_{k'}(\nu_{k'},1) \mid  \hat \gamma^{exc}(\tau_z \wedge \tau_{z'}),\Phi_{\hat \gamma^{exc}(\tau_z \wedge \tau_{z'})}\right]\1_{E_{k,k'}}\1_{(1)} \right]=0,\end{align*}{
    where we use that the event (1) and $E_{k,k'}$ are measurable with respect to $\gamma^{exc}(\tau_z \wedge \tau_{z'}),\Phi_{\Phi^{\hat \gamma^{exc}(\tau_z \wedge \tau_{z'})}}$ (recall  that $ \gamma^{exc}(\tau_z \wedge \tau_{z'}) \subseteq \hat \gamma^{exc}(\tau_z \wedge \tau_{z'})$), and the previous claim together with the construction of the decomposition $(C_k, \sigma_k, \nu_k)_{k \geq 1}$ in Section \ref{s:existence}. }
    
    For the case (2), a similar computation is needed:
    \begin{align*}
    &\E\left[\sigma_{k} \sigma_{k'} (\nu_k,1) (\nu_{k'},1) \1_{E_{k,k'}}\1_{(2)}\right]\\ 
    &=\E\left[\E\left[\sigma_{k'}(\nu_{k'},1) \mid  \hat \gamma^{exc}(\tau_z \wedge \tau_{z'}),\Phi_{\hat \gamma^{exc}(\tau_z \wedge \tau_{z'})}\right]\E\left[\sigma_{k}(\nu_{k},1) \mid  \hat \gamma^{exc}(\tau_z \wedge \tau_{z'}),\Phi_{\hat \gamma^{exc}(\tau_z \wedge \tau_{z'})}\right]\1_{E_{k,k'}}\1_{(2)} \right]\\ 
    &=0.\end{align*}
    {Here we used the claim above together with the fact that $\Phi^{\hat \gamma^{exc}(\tau_z \wedge \tau_{z'})}$ restricted to disjoint components $\mathcal O, \mathcal O'$ are independent.}

For clusters at further levels we discover $\hat \gamma^{exc}(\tau_z \vee \tau_{z'})$, i.e. we wait until both outermost clusters appear, and then iterate inside the connected components of complement of $\hat \gamma^{exc}(\tau_2)$ containing either $z$ or $z'$ as above.\end{proof}
 \end{proof}
 
Finally, Proposition \ref{prop:eq} follows by some further considerations on local sets and by also using the known Minkowski dimension of $\A_{-2\lambda, 2\lambda}$.
 \begin{proof}[Proof of Proposition \ref{prop:eq}]
    We start by showing that 
$C_k = \hat C_{\hat{k}(k)}$ almost surely. By Proposition \ref{prop:ss}, we have that $C_k \subseteq \hat C_{\hat{k}(k)}$. Further by Lemma \ref{l.no_inside}, we know that $C_{\hat{k}(k)} \setminus C_k$ can only intersect the connected component $\mathcal O$ of $D \setminus C_k$ that contains $\partial \D$ on its boundary. 

Like in the proof of Claim \ref{c:covariance_different_cluster}, all clusters can be listed from outermost towards the interior around dyadic points $z_k$. Thus it suffices to prove $C_k = \hat C_{\hat{k}(k)}$ for outermost clusters and this in turn follows from showing the following claim: for any curve $\gamma$ along the dyadics starting from $\partial \D$ to some $z_k$, we have that the complements $\mathcal O, \hat {\mathcal O}$ of $A= \gamma^{exc}(1)$ and $\hat A =\hat \gamma^{exc}(1)$, which  share boundary with $\D$, agree almost surely. To see this observe that both $A, \hat A$ are local sets, and by Lemma \ref{lem:BPLS} (2),  $\hat A \setminus A $ is also a local set of $\phi^{\gamma^{exc}(1)}$ restricted to $\mathcal O$. Further, as $C_k \subseteq C_{\hat{k}(k)}$ and by Proposition \ref{prop:ws} the function $k \to \hat{k}(k)$ is injective, we see that $\hat A \setminus A \subseteq \A_{-2\lambda, 2\lambda}(\D)$. Hence $\hat A \setminus A$ has Minkowski dimension strictly less than 2 \cite{ASW} and it is a thin local set of $\phi^{\mathcal O}$, connected to boundary with zero boundary values. Thus by Lemma 9 of \cite{ASW} it is almost surely empty.

It remains to show that the function $k \to \hat k$ is surjective. This follows from a very similar argument as above. Indeed, observe that any cluster $\hat C_{k^*}$ that is not equal to some $C_k$ is contained in the closed union of outer boundaries of $C_k$ in some finite iteration step of the construction of the excursion decomposition $(C_k, \nu_k, s_k)_{k \geq 1}$. But these outer boundaries are given by independent copies of $\A_{-2\lambda, 2\lambda}$. Thus we can repeat the argument above to obtain that such clusters of positive diameter do not exist. To see that there are no clusters whose support is just a point, we recall that almost surely the 2D GFF does not put any mass on single points.

 \end{proof}

 \subsection{Conclusion of the uniqueness of the excursion decomposition}
    
    \begin{proof}[Proof of uniqueness in Theorem \ref{thm:excdcmp}] Take $(\sigma_k,\nu_k,C_k)$ and $(\hat \sigma_k, \hat \nu_k, \hat C_k)$ two decompositions, where the first one is the one constructed in the previous section. 
   From Proposition \ref{prop:ss} we know that for every $k$ there is $\hat k(k)$ such that $C_k= \hat C_{\hat k(k)}$ and by Propositions \ref{prop:ws}, \ref{prop:eq} this assignment is bijective. We also know that the signs agree, so it suffices to show that the measures $\nu_k$, $\hat \nu_k$ agree.
   
 To see this observe that for any simple curve $\gamma$ from the boundary and for any time $t$ a.s., we have that $\gamma^{exc}(t) = \hat \gamma^{exc}(t)$. But now notice that by the Markov decomposition, we can conclude that almost surely for any curve $\gamma$ along the dyadics and any rational time in $H^{-1-\eps}(\D)$
   $$\phi_{\gamma^{exc}(t)} = \hat \phi_{\gamma^{exc}(t)}.$$
   But the local set process $\gamma^{exc}(t)$ is right-continuous, and for any decreasing sets $D_n$ with $\bigcap D_n = D$, $\Phi^{D_n}$ is also continuous. We conclude that in fact for all times $t \in [0,1]$, it holds that 
      $$\phi_{\gamma^{exc}(t)} = \hat \phi_{\gamma^{exc}(t)}$$
      and in particular it holds at the appearance of any cluster of diameter at least $\epsilon$. This concludes that in fact for all $k \geq 1$, we have $\nu_k = \hat \nu_{\hat{k}(k)}$ and the theorem follows.

    \end{proof}
    
    \section{Convergence from the metric graph}\label{s:convergence}
In this section, we prove the convergence of the excursion decomposition of the metric graph GFF to that of the continuum GFF. We will work in the same set-up as in Section 4.1 of \cite{ALS2}, except that the domain $D$ will always be simply connected. 

For all $n \geq 1$, let $\tilde \phi_n$ be a metric GFF in a bounded graph $D_n\subseteq (2^{-n}\Z)^2$. We define $(\widetilde C_k^{(n)},
\tilde\sigma^{(n)}_k,\tilde\nu_k^{(n)})_{k \geq 1}$ as the sequence of sign clusters of $\tilde \phi_n$, the respective signs and sign excursions, ordered by decreasing size of cluster diameter. Here, by a sign excursion we mean the absolute value of the restriction of the GFF to the cluster 
$\widetilde C_k^{(n)}$, i.e.
\begin{align*}
    \tilde\nu_k^{(n)}(dx) = \tilde\sigma_k^{(n)}\tilde \phi_n (x) \1_{x\in \widetilde C_k^{(n)}} dx. 
\end{align*}

We now take a sequence of (metric) graphs $\widetilde D_n\subseteq (2^{-n}\Z)^2$ converging to a bounded and simply connected domain $D\subseteq \C$ in the sense that {their complements inside some large box $[-C, C]^2 \supseteq D$ converge in the Hausdorff topology (as in Section 4.1.1 of \cite{ALS2})}. 

The main result of this section is the following.
\begin{thm}[Convergence of the excursion decomposition]\label{thm:conv}
Let $\tilde \phi_n$ be a sequence of zero boundary metric graph GFFs on $\widetilde D_n$ that are coupled with a GFF $\Phi$ such that a.s. 
$\tilde \phi_n \to \Phi$ in, say, $H^{-\epsilon}$.

Then for every $k>0$,  $ \widetilde C_k^{(n)}\to C_k$, 
$\tilde\nu_k^{(n)}\to \nu_k$ and $\tilde\sigma_k^{(n)}\to \sigma$ as $n\to \infty$, where the convergence is in probability and in the Hausdorff topology for the first, and in the weak topology of measures for the first and second components respectively.
\end{thm}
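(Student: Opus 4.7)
The plan is to reduce the convergence to the uniqueness statement of Theorem \ref{thm:excdcmp} via a tightness plus identification scheme. First I would establish joint tightness of $(\widetilde C_k^{(n)}, \tilde\sigma_k^{(n)}, \tilde\nu_k^{(n)})_{k \geq 1}$ along the coupling where $\tilde\phi_n \to \Phi$ in $H^{-\epsilon}$. Then, passing to any almost surely convergent subsequence (via a Skorokhod-type construction), I would argue that the limit $(\tilde C_k, \tilde\sigma_k, \tilde\nu_k)_{k \geq 1}$ satisfies all three conditions in the statement of Theorem \ref{thm:excdcmp} for the field $\Phi$. By the uniqueness part of that theorem, this limit must coincide with the continuum excursion decomposition $(C_k, \sigma_k, \nu_k)_{k \geq 1}$, which upgrades subsequential convergence to convergence in probability.

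For tightness, the Hausdorff-tightness of the clusters $\widetilde C_k^{(n)}$ comes from Lupu's convergence of metric graph sign clusters to critical Brownian loop soup clusters \cite{LupuConvCLE}, combined with Theorem \ref{thm:prop} (4) identifying the latter with the $C_k$. The signs $\tilde\sigma_k^{(n)}\in\{\pm 1\}$ are trivially tight. For the measures $\tilde\nu_k^{(n)}$, the key observation is that conditionally on the outer boundary of $\widetilde C_k^{(n)}$, the signed restriction $\tilde\sigma_k^{(n)}\tilde\phi_n\1_{\widetilde C_k^{(n)}}$ is the excursion measure of a metric graph first passage set of level $-2\lambda$, and the convergence of metric graph FPS and their associated measures to the continuum FPS and its Minkowski content measure (used implicitly in \cite{ALS2} and tied to Theorem \ref{Min}) yields tightness. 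For the identification, the representation \eqref{Eq decomp 1} in the limit follows from $\tilde\phi_n \to \Phi$ combined with a truncation argument using Lemma \ref{lem:tail}: uniformly in $n$, the contribution of clusters with diameter less than $\eps$ vanishes in $H^{-1-\eps}$, so one can pass to the limit in the finitely many large ones. The Markov property along smooth simple paths $\gamma$ is inherited from its discrete counterpart at the metric graph level, since almost surely $\gamma$ intersects only finitely many macroscopic clusters and the boundary of $\gamma^{exc}$ is stable under the convergence. Local finiteness, disjointness and connectedness of the limit clusters follow from the corresponding properties of critical loop soup clusters.

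The main obstacle I anticipate is twofold. First, identifying the limiting measures $\tilde\nu_k$ with the Minkowski content measures $\nu_k$ of the limit clusters: this requires comparing the discrete measure $\tilde\sigma_k^{(n)}\tilde\phi_n(x)\1_{x\in\widetilde C_k^{(n)}}dx$ with a geometric object defined intrinsically from $C_k$. Here I would exploit the fact that, conditionally on the outer boundary of a macroscopic cluster, this is exactly the metric graph analogue of the FPS setting of Theorem \ref{Min}, and invoke (or adapt) the continuum FPS convergence from \cite{ALS2} which guarantees that the normalised metric graph excursion measure converges to the Minkowski content gauge $r\mapsto|\log r|^{1/2}r^2$. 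Second, correctly recovering the signs in the limit: here I would pair $\tilde\phi_n$ with a non-negative smooth bump function supported in a thin neighbourhood of $C_k$ that avoids all other clusters of comparable diameter (possible by local finiteness), and use $H^{-\epsilon}$ convergence together with the measure convergence to read off $\tilde\sigma_k^{(n)}\to \sigma_k$ from the sign of the corresponding pairing.
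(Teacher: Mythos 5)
Your overall architecture (tightness, Skorokhod representation, identification of the subsequential limit, recursion from the outermost clusters inward) matches the paper's, and your use of uniqueness is a legitimate variant: the paper invokes the uniqueness of Theorem \ref{thm:excdcmp} only to upgrade convergence in law to convergence in probability (via Lemma 4.10 of \cite{ALS2}), and otherwise identifies clusters, signs and measures one by one rather than verifying that the subsequential limit satisfies conditions (1)--(3) and then quoting uniqueness. The cluster identification via \cite{LupuConvCLE} together with uniqueness of first passage sets, and the recovery of signs by pairing with positive bump functions once the measures are known, are essentially as in the paper.

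The genuine gap is your treatment of the microscopic clusters, which is precisely where a new ingredient is needed. You appeal to Lemma \ref{lem:tail} to claim that, uniformly in $n$, the clusters of diameter less than $\eps$ contribute negligibly; but Lemma \ref{lem:tail} concerns a continuum GFF on an open set all of whose components are small, whereas the object to be controlled is the signed sum $\sum_k \tilde\sigma_k^{(n)}\tilde\nu_k^{(n)}$ over small \emph{discrete} clusters, which is not of that form. The paper's substitute is Lemma \ref{lem:L2L4}: the i.i.d.\ signs and the sign symmetry of the field off any prescribed collection of clusters kill all cross terms, yielding $\E[(\tilde \phi_n,f)^2]=\sum_{k}\E[(\tilde\nu_k^{(n)},f)^2]+\E[(\1_{\widetilde D_n\setminus\cup_k\widetilde C_k^{(n)}}\tilde\phi_n,f)^2]$ and its $L^{2q}$ analogue. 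This identity is used twice in ways your proposal does not replace: (i) it gives the uniform moment bound $\E[(\tilde\nu_k^{(n)},f)^{2q}]\leq\E[(\tilde\phi_n,f)^{2q}]$ needed for tightness and uniform integrability of the excursion measures (your citation of FPS convergence from \cite{ALS2} is not a substitute --- note also that conditionally on its outer boundary the metric graph sign cluster is an FPS of level $0$, not $-2\lambda$; the height gap only emerges in the limit); and (ii) it rules out spurious extra mass: the Markov decomposition only yields the one-sided inequality $\mathring\nu_{\mathbf k}\geq\nu_{k(\mathbf k)}$ for a subsequential limit, since a priori microscopic clusters can collapse onto a macroscopic one and deposit additional mass, and equality is obtained by comparing the $L^2$ expansions before and after the limit. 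Without an argument of this type, your identification of the limiting measures with the Minkowski content measures $\nu_k$ --- and hence your verification of condition (1) of Theorem \ref{thm:excdcmp} for the limit --- is incomplete.
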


In large lines, one could say that the theorem follows by patching together different convergence results for each element, all of which are already present in the literature. This patching, however, does require some care, mainly to rule out different possible spurious contributions from microscopic clusters. Notice that we will not use the uniqueness claim of the theorem to identify the limit; rather we will identify excursion clusters, signs and measures one by one.

We start from a lemma that ensures the tightness of the sequences of measures
$(\tilde{\nu}^{(n)}_{k})_{n\geq 0}$ and allows us to see that no spurious extra mass is produced in the limit by infinitesimal excursion clusters.

\begin{lemma}\label{lem:L2L4}
Let $(\widetilde C_k^{(n)},\tilde\sigma^{(n)}_k,\tilde\nu_k^{(n)})$ be an excursion decomposition of the metric graph GFF $\tilde \phi_n$ and let $J$ be any (deterministic) index set. Then for any $q \in \N$
\begin{equation}\label{eq:L4ineq}
\E\left[(\tilde \phi_n, f)^{2q}\right] \geq \sum_{k \in J} \E\left[(\tilde \nu_k^{(n)}, f)^{2q} \right]+ \E \left[(\1_{\widetilde D_n \backslash \cup_{k \in J}\widetilde C_k^{(n)}}\tilde \phi_n, f)^{2q}\right].
\end{equation}
and
\begin{equation}\label{eq:L2eq}
\E\left[(\tilde \phi_n, f)^2\right] = \sum_{k \in J} \E\left[(\tilde \nu_k^{(n)}, f)^2 \right]+ \E \left[(\1_{\widetilde D_n \setminus \cup_{k \in J}\widetilde C_k^{(n)}}\tilde \phi_n, f)^2\right]
\end{equation}
\end{lemma}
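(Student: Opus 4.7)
The plan is to exploit the defining property of the metric graph GFF sign decomposition: conditionally on $|\tilde\phi_n|$ (equivalently, on the clusters $(\widetilde C_k^{(n)})$ together with the measures $(\tilde\nu_k^{(n)})$), the signs $(\tilde\sigma_k^{(n)})_{k\geq 1}$ are i.i.d.\ Rademacher variables. This is the classical Lupu-type FK--Ising representation for the metric graph GFF that already underlies the discussion after Theorem \ref{thm:BLSGFF2}, and it reduces both claims to elementary identities for linear combinations of independent Rademacher variables with frozen coefficients.

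The starting point is the pointwise decomposition
\[
(\tilde\phi_n, f) = \sum_{k\geq 1}\tilde\sigma_k^{(n)}(\tilde\nu_k^{(n)}, f),\qquad (\1_{\widetilde D_n\setminus\bigcup_{k\in J}\widetilde C_k^{(n)}}\tilde\phi_n, f) = \sum_{k\notin J}\tilde\sigma_k^{(n)}(\tilde\nu_k^{(n)}, f),
\]
which follows immediately from $\tilde\sigma_k^{(n)}\tilde\nu_k^{(n)}(dx)=\tilde\phi_n(x)\1_{\widetilde C_k^{(n)}}(x)\,dx$ and the fact that the clusters partition $\{\tilde\phi_n\neq 0\}$. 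Setting $\mathcal F:=\sigma(|\tilde\phi_n|)$, $Y:=\sum_{k\in J}\tilde\sigma_k^{(n)}(\tilde\nu_k^{(n)},f)$ and $Z:=\sum_{k\notin J}\tilde\sigma_k^{(n)}(\tilde\nu_k^{(n)},f)$, we have that, conditionally on $\mathcal F$, $Y$ and $Z$ are independent sums of sign-weighted deterministic numbers.

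For the $L^2$ identity, I expand $(Y+Z)^2$ and take conditional expectation. Since the $\tilde\sigma_k^{(n)}$ are conditionally mean-zero and pairwise orthogonal, the cross term and all off-diagonal terms vanish, giving $\E[(Y+Z)^2\mid\mathcal F] = \sum_{k\in J}(\tilde\nu_k^{(n)},f)^2 + \sum_{k\notin J}(\tilde\nu_k^{(n)},f)^2$. Recognizing the second sum as $\E[(\1_{\widetilde D_n\setminus\cup_{k\in J}\widetilde C_k^{(n)}}\tilde\phi_n,f)^2\mid\mathcal F]$ and taking expectations yields \eqref{eq:L2eq}.

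For the $2q$-th moment inequality, I use that conditionally on $\mathcal F$ all odd moments of $Y$ and of $Z$ vanish by the symmetry $\tilde\sigma\mapsto -\tilde\sigma$. Conditional independence gives
\[
\E[(Y+Z)^{2q}\mid\mathcal F]=\sum_{j=0}^{2q}\binom{2q}{j}\E[Y^j\mid\mathcal F]\E[Z^{2q-j}\mid\mathcal F],
\]
in which only even--even terms survive and are all non-negative; keeping only $j=0$ and $j=2q$ gives $\E[(Y+Z)^{2q}\mid\mathcal F]\geq \E[Y^{2q}\mid\mathcal F]+\E[Z^{2q}\mid\mathcal F]$. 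Finally, a multinomial expansion of $Y^{2q}=(\sum_{k\in J}\tilde\sigma_k^{(n)}a_k)^{2q}$ with $a_k:=(\tilde\nu_k^{(n)},f)$ leaves only the monomials whose exponents are all even; the diagonal monomials contribute exactly $\sum_{k\in J}a_k^{2q}$ and the remaining ones are non-negative, so $\E[Y^{2q}\mid\mathcal F]\geq \sum_{k\in J}(\tilde\nu_k^{(n)},f)^{2q}$. Taking expectations gives \eqref{eq:L4ineq}. The only non-routine input is the conditional i.i.d.\ Rademacher property of the signs, which is a standard structural property of the metric graph GFF; no measure-theoretic subtleties arise since all sums involved are of non-negative terms and thus a priori well-defined in $[0,\infty]$.
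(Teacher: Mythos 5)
Your proof is correct and follows essentially the same route as the paper: decompose $(\tilde\phi_n,f)$ into the cluster contributions plus the remainder, expand the $2q$-th power, kill all odd cross terms by sign symmetry (the paper invokes the conditional symmetry of the remainder field and the i.i.d.\ Rademacher signs, you package both into conditioning on $|\tilde\phi_n|$), and lower-bound the surviving even cross terms by zero, with equality for $q=1$. The only cosmetic imprecision is your closing remark that all sums are of non-negative terms -- the series $\sum_k\tilde\sigma_k^{(n)}(\tilde\nu_k^{(n)},f)$ is not, but it converges absolutely since $\sum_k(\tilde\nu_k^{(n)},|f|)=\int|\tilde\phi_n||f|<\infty$, so nothing breaks.
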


\begin{proof}
For the inequality \eqref{eq:L4ineq}, it suffices to prove it for any finite index set and any $q \geq 1$, then the case of infinite index sets follows by dominated convergence.
We decompose
\begin{displaymath}
    \tilde \phi_n =
    \sum_{k \in J} \tilde\sigma_k^{(n)} \tilde{\nu}^{(n)}_{k}
    + \1_{\widetilde D_n \setminus \cup_{k \in J}\widetilde C_k^{(n)}}\tilde \phi_n .
\end{displaymath}
Then, we write $\mathbb{E}[(\tilde \phi_n,f)^{2q}]$ as the sum of three types of terms, the first being
\begin{displaymath}
 \sum_{k \in J}
    \mathbb{E}[(\tilde{\nu}^{(n)}_{k},f)^{2q}]
    +
    \mathbb{E}[(\1_{\widetilde D_n \setminus \cup_{k \in J} \widetilde C_k^{(n)}}
    \tilde \phi_n,f)^{2q}],
\end{displaymath}
the second type of terms are a binomial coefficients times
\begin{displaymath}
    \mathbb{E}[(\tilde\sigma_k^{(n)}\tilde{\nu}^{(n)}_{k},f)^{p}(\1_{\widetilde D_n \setminus \cup_{k \in J} \widetilde C_k^{(n)}}
    \tilde \phi_n,f)^{2q -p}]
\end{displaymath}
and the last type of terms are constant times
\begin{displaymath}\mathbb{E}[(\tilde\sigma_k^{(n)}\tilde{\nu}^{(n)}_{k},f)^p(\tilde\sigma_j^{(n)}\tilde{\nu}^{(n)}_{j},f)^{2q-p}],
\end{displaymath}
with $k \neq j$. 

Now, when $p$ is even, we can lower bound the second and third types of terms by $0$. However, we claim that when $p$ is odd, they are equally zero by sign symmetry. Indeed, conditionally on 
$(\widetilde C_k^{(n)},\tilde\sigma_k^{(n)},\tilde{\nu}^{(n)}_{k})_{k \in J}$,
the field 
$\1_{\widetilde D_n \setminus \cup_{k \in J}\widetilde C_k^{(n)}} \tilde \phi_n$
has the same distribution as its {additive inverse}.
Thus for $p$ odd
\begin{displaymath}
  \sum_{k \in J}\mathbb{E}[(\tilde\sigma_k^{(n)}\tilde{\nu}^{(n)}_{k},f)^p(\1_{\widetilde D_n \setminus \cup_{k \in J} \widetilde C_k^{(n)}}
    \tilde \phi_n,f)^{2q-p}] = 0.
\end{displaymath}
But also all the signs $\tilde \sigma_k^{(n)}$ are i.i.d. Rademacher random variables, thus also for all $j \neq k \in J$
    \begin{displaymath}
\mathbb{E}[(\tilde\sigma_k^{(n)}\tilde{\nu}^{(n)}_{k},f)^p(\tilde\sigma_j^{(n)}\tilde{\nu}^{(n)}_{j},f)^{2q-p}] = 0
\end{displaymath}
and we conclude the first part. The second part for finite index sets follows from the computation above, as in the case $q = 1$, there are no cross-terms with even exponents; for the infinite sums, we can use dominated convergence, guaranteed by say the case $q = 2$ in \eqref{eq:L4ineq}.
\end{proof}

\begin{proof}[Proof of Theorem \ref{thm:conv}]We start by noting that thanks to the uniqueness of the excursion decomposition and Lemma 4.10 of \cite{ALS2}, we only need to prove convergence in law of 
$(\tilde \phi_n, (\widetilde C_k^{(n)},\tilde\sigma^{(n)}_k,\tilde\nu_k^{(n)})_k)$ as $n\to \infty$. 

Now, $\tilde \phi_n$ are tight by assumption; for any $k$, 
$\widetilde C_k^{(n)}$ are tight as random closed sets in a compact domain, $\tilde\sigma^{(n)}_k$ are tight as $\pm 1$ valued random variables and finally $\tilde\nu_k^{(n)}$ are tight by the first equality in Lemma \ref{lem:L2L4}.
Thus, using Tychonoff theorem, we see that $(\tilde \phi_n, (\widetilde C_k^{(n)},\tilde\sigma^{(n)}_k,\tilde\nu_k^{(n)})_k)$ is tight, and thus we can find a subsequence of it (we denote it the same way) and use Skorokhod's representation theorem to obtain the almost sure convergence \begin{align*}
(\tilde \phi_n, (\widetilde C_k^{(n)},\tilde\sigma^{(n)}_k,\tilde\nu_k^{(n)})_k) \to (\hat \Phi, (\hat C_k,\hat \sigma_k, \hat \nu_k)_k).\end{align*}
We just need to identify $(\hat \Phi, (\hat C_k,\hat \sigma_k, \hat \nu_k)_k)$ as the elements of the excursion decomposition. First, it is clear that $(\hat \sigma_k)_k$ are i.i.d. Radamacher random variables and $\hat \Phi$ is a GFF in $D$.

First, note that if we only study the outer most clusters 
{$\mathring C_{\mathbf k}^{(n)}:=\hat C_{k({\mathbf k})}^{(n)}$} (i.e. those that are not surrounded by any other cluster), then the outer boundaries of those outermost clusters converge to the loops of CLE$_4$ in the sense that the outer boundaries of the $m$ largest outermost discrete clusters converge to outer boundaries $m$ largest continuum ones, and moreover the closed union of all outermost cluster boundaries converges to CLE$_4$ - these statements follow from the work in \cite{LupuConvCLE}. More precisely, the main statement of that paper does not directly apply these claims  - it does not exclude long thin filament-like clusters with limits in the interior of CLE casket; however with further work it can be deduced with the same methods; see e.g. Lemma 4.13 in \cite{ALS2} for a context, where similar care is needed, or proof of Lemma 6 of \cite{QW15}.

Now, notice that once we manage to identify the outermost clusters, their signs and measures, then we can recursively continue. Indeed, as the closure of the union of outermost clusters $\mathring C^{(n)}_{\mathbf k}$ is a local set for all $n$, we conclude that in the limit, conditionally on the closure of the union of all outermost clusters $\mathring C_{\mathbf k}$, the law of $\hat \Phi$ restricted to $D \setminus (\cup_{\mathbf k} \mathring C_{\mathbf k})$ is that of a zero boundary GFF in $D \setminus (\cup_{\mathbf k} \mathring C_{\mathbf k})$. Thus we see that once we can deal with outermost excursions, the convergence will also hold for excursions that are surrounded by finitely many excursions. As for any $\epsilon>0$ the number of excursions of diameter bigger than $\epsilon$ is almost surely finite, we have reduced the proposition to proving convergence for outermost clusters. This convergence is the content of the following claim.

\begin{claim}\label{c:conv_FPS}
    Fix any loop $\ell$ of $\A_{-2\lambda,2\lambda}$ and consider the sequence of clusters $\mathring C_{\mathbf k}^{(n)}$ whose outer boundaries converge to $\ell$. Then $\mathring C_{\mathbf k}^{(n)}$ converges to the union of $\ell$ with the FPS $\A_{\pm 2\lambda}$ of the GFF $\Phi^{\A_{-2\lambda,2\lambda}}$ restricted to $\mathcal O$, the interior of $\ell$. Furthermore, $\mathring \nu_{\mathbf k}^{(n)}$ converges to the measure $\nu_{\A_{\pm 2\lambda}}$ associated to this FPS.
\end{claim}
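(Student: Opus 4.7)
My plan is to combine the convergence of CLE$_4$ from metric-graph sign clusters (Lupu) with the metric-graph convergence of the first passage set and its Minkowski content measure (from \cite{ALS2}). Working along the almost sure subsequence fixed above, I would first apply the main result of \cite{LupuConvCLE}, together with the strengthening highlighted in the paragraph preceding the claim, to identify the limit of each outer boundary $\ell^{(n)}$ of an outermost metric-graph sign cluster with the corresponding loop of $\A_{-2\lambda, 2\lambda}$, jointly with the convergence of $\tilde \phi_n$. In particular, for the fixed loop $\ell$ this yields $\ell^{(n)} \to \ell$ in the Hausdorff topology and convergence of the interiors $\mathcal O^{(n)} \to \mathcal O$.

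Next, I would condition on $\ell^{(n)}$ and use the metric graph Markov property: inside $\mathcal O^{(n)}$ the field is an independent zero-boundary metric graph GFF $\tilde \phi_n^{\ell^{(n)}}$, and the unsigned cluster $\mathring C_{\mathbf k}^{(n)}$ is the discrete analogue of the first passage set $\A_{-2\lambda}$ of this fresh field (the continuum version being precisely Proposition \ref{thm:BLSGFF2}). Combined with the joint convergence $\tilde \phi_n^{\ell^{(n)}} \to \hat \Phi^{\ell}$, which by the continuum Markov property coincides with $\hat \Phi^{\A_{-2\lambda,2\lambda}}$ restricted to $\mathcal O$, the metric-graph FPS convergence in \cite{ALS2} then yields $\mathring C_{\mathbf k}^{(n)} \to \ell \cup \A_{\pm 2\lambda}$ in the Hausdorff topology, with the sign matching the limit of $\mathring \sigma_{\mathbf k}^{(n)}$. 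The analogous convergence of the Minkowski content measure on the metric graph (also in \cite{ALS2}), combined with Theorem \ref{Min}, gives $\mathring \nu_{\mathbf k}^{(n)} \to \nu_{\A_{\pm 2\lambda}}$ weakly.

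The main obstacle is to exclude spurious contributions in the limit: infinitesimal excursions of $\tilde \phi_n$ attached to $\ell^{(n)}$ from the outside could a priori accumulate to enlarge the limiting set beyond $\ell \cup \A_{\pm 2\lambda}$, and similarly the weight of the measure $\mathring \nu_{\mathbf k}^{(n)}$ could leak onto microscopic sign components vanishing in the limit. Both issues are handled by the $L^2$-identity in Lemma \ref{lem:L2L4}, which bounds the total variance of the measures associated with clusters outside any fixed finite family by $\E[(\tilde \phi_n, f)^2]$ and therefore precludes such an escape of mass. Once the outermost level is established, the claim for deeper levels follows by iterating inside each interior $\mathcal O$, using the Markov decomposition of $\hat \Phi$ with respect to $\A_{-2\lambda,2\lambda}$ and the local finiteness of macroscopic clusters.
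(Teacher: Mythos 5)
Your overall architecture matches the paper's: reduce to outermost clusters, identify the limit of the cluster with the first passage set inside the loop, and then use the $L^2$ identity of Lemma \ref{lem:L2L4} (together with the convergence of the complementary field) to rule out an escape or gain of mass; this last step, which is the crux of the measure identification, is exactly the paper's argument, and the recursive descent to deeper nesting levels is also the same.

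The gap is in how you identify the limit of the set $\mathring C_{\mathbf k}^{(n)}$ and of its measure. You propose to condition on the outer boundary $\ell^{(n)}$, view the cluster as the metric-graph FPS of a fresh zero-boundary field in $\mathcal O^{(n)}$, and then invoke ``the metric-graph FPS convergence in \cite{ALS2}.'' But the convergence results of \cite{ALS2} are formulated for a \emph{fixed} continuum domain approximated by a deterministic sequence of lattice domains; here the domains $\mathcal O^{(n)}$ are random, vary with $n$, and converge to $\mathcal O$ only along the subsequence and only through Hausdorff convergence of their boundaries (which by itself does not even give Carath\'eodory convergence of the interiors without using regularity of the CLE$_4$ loop). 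So the cited theorem does not apply as stated, and making it apply is essentially a new argument. The paper circumvents this entirely: it works with the closed union of \emph{all} outermost clusters, which is a local set of $\tilde\phi_n$ for each $n$; by Lemma \ref{lem:BPLS} its limit is a local set of $\hat\Phi$, and restricted to $\mathcal O$ this limit satisfies the defining properties of an FPS of level $\pm2\lambda$ for $\Phi^{\A_{-2\lambda,2\lambda}}$, hence equals it by the uniqueness in Theorem \ref{Thm::FPS}. No domain-wise convergence theorem is needed. Likewise for the measure: rather than proving direct convergence of the metric-graph FPS measures in these random domains, the paper only extracts the one-sided bound $\mathring\nu_{\mathbf k}\geq\nu_{k(\mathbf k)}$ from the Markov decomposition and upgrades it to equality via Lemma \ref{lem:L2L4}. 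If you replace your conditioning-plus-convergence step by this ``limit of local sets $+$ uniqueness of the FPS'' characterization, the rest of your plan goes through.
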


\begin{proof}[Proof of Claim \ref{c:conv_FPS}]
    First, we note that the union of all outermost clusters $\mathring C_{\mathbf k}^{(n)}$ is a local set of $\tilde \phi_n$, thus its limit 
    {$\mathring C_{\mathbf k}$} is a local set of $\hat \Phi$ by Lemma \ref{lem:BPLS}. Further, when restricted to the interior $\mathcal O$ of the outermost boundary $\ell$ of the cluster $\mathring C_{\mathbf k}$, this limit is a local set that satisfies the properties of an FPS of level $\pm 2\lambda$ (for the GFF $\Phi^{\A_{-2\lambda,2\lambda}}$ restricted to $\mathcal O$). Thus it is equal to this set by the uniqueness of FPS, Theorem \ref{Thm::FPS}. In particular, this means that {$\mathring C_{\mathbf k}$}, 
    the limit of $ \mathring C_{\mathbf k}^{(n)}$, is equal to some outermost cluster $C_{k(\mathbf k)}$.

   To identify the limiting excursion measures, we will follow a strategy similar to what was used in Section \ref{S:uniqueness} to deduce the equality of excursion clusters and excursion measures by a no extra mass argument. Additional convergence issues are taken care by Lemma \ref{lem:L2L4}. Let us flesh it out here.

   First, as no other subsequential limit of an outermost cluster 
    $\mathring C_{\mathbf k}$ can intersect $\mathcal O$, we conclude from the Markov decomposition w.r.t. the FPS $\A_{\pm 2\lambda}$ in $\mathcal O$ that \begin{equation}\label{eq:oneside}
    \mathring \nu_{\mathbf k}:=\lim_{n\to +\infty} \nu_{\mathbf k}^{(n)}\geq \nu_{k(\mathbf k)}\end{equation} in terms of positive measures.

    Further, one needs to show that there is no extra mass on the subsequential limiting clusters, that is possibly compensated by some infinitesimal excursions in the limit.

    To see this, recall that the closure of the union of outermost clusters $\mathring C_{\mathbf k}$ forms a local set. Also, from the argument above we see that this local set is equal to the local set obtained by taking CLE$_4$ and first passage sets of height $\pm 2\lambda$ inside each of the cluster, i.e. it is equal to the closed union of outermost clusters $\mathring C_{\mathbf k}$. Let us denote this set by $A$.
    
    By Lemma \ref{lem:L2L4}, we can write the sum over outermost excursion clusters $\mathring C_{\mathbf k}$, 
    $$\E\left[(\Phi,1)^2)\right] = \sum_{\mathbf k} 
    \E\left[ (\nu_{k(\mathbf k)},1)^2\right] + \E\left[(\Phi^{A},1)^2\right].$$
    On the other hand, by the claim above and the first point of Lemma \ref{lem:L2L4}, dominated convergence gives us that for any $\mathbf k$ 
    $$\E\left[(\mathring\nu^{(n)}_{\mathbf k},1)^2\right] \to 
    \E\left[(\mathring\nu_{\mathbf k},1)^2\right].$$ Similarly, by Corollary 4.5 from \cite{ALS2} and part 1 of Lemma \ref{lem:L2L4} we also have that $\E\left[(\tilde\phi_{n}^{A^{(n)}},1)^2\right] \to \E\left[(\Phi^{A},1)^2\right]$, where $A^{(n)}$ denotes the local set given by the closed union of $\mathring C_{\mathbf k}^{(n)}$. Thus we also have 
    $$\E\left[(\Phi,1)^2)\right] = \sum_{\mathbf k} \E\left[ (\mathring \nu_{\mathbf k},1)^2\right] + \E\left[(\Phi^{A},1)^2\right].$$
    But now recall that $\mathring \nu_{\mathbf k} \geq\nu_{k(\mathbf k)}$, from where we see that in fact we have to have a one to one correspondence between clusters with positive measure, and the equality has to hold in \eqref{eq:oneside}.

\end{proof}

\end{proof}

    \section{Further comments and conjectures: crossing probabilities, 2D continuum GFF as a limit of spin models and the excursion decomposition of the DGFF}\label{s:further}

    \subsection{Uniform continuity of crossing probabilities by sign excursions}
     First we deal with the continuity, uniformly with respect to the scale, of annuli crossing probabilities by metric graph excursion clusters. \footnote{We hereby send greetings to Jian Ding who asked us this question.} This confirms the assumption in Remark 2 of \cite{DingWirth}. 
     A similar statement for first passage sets was proved in Corollary 5.1 in \cite{ALS2}.
    
The set-up is as follows. Consider $D=(-1,1)^{2}$, and let $\widetilde{D}_{n}$ be the metric graph approximation of $D$ in the square lattice $\frac{1}{n}\Z^{2}$.
    Let $\tilde{\phi}_{n}$ be the metric graph GFF on $\widetilde{D}_{n}$ with $0$ boundary conditions.
    For $a\in (0,1)$, let $S_{a}$ denote the square contour
    $S_{a}=\partial((-a,a)^{2})$
    For $a\leq b\in (0,1)$, let $p_{n}(a,b)$ the probability that there is a sign cluster of $\tilde{\phi}_{n}$ that crosses from $S_{a}$ to $S_{b}$, i.e. crosses the annulus $[-b,b]^2 \setminus (-a,a)^2$.
    
    Note that $p_{n}(a,a)=1$ and that for fixed $n$, the function $p_{n}(a,b)$ is continuous on
    $\{a,b\in (0,1)^{2} : a\leq b\}$. Indeed, this is due to the fact that for $a\in (0,1)$ fixed, a.s., if a cluster of $\tilde{\phi}_{n}$ intersects
    $S_{a}$, then it also intersects $(-a,a)^{2}$ and 
    $D\setminus [-a,a]^{2}$.
    However, we are interested in the continuity of 
    $(a,b)\mapsto p_{n}(a,b)$ uniformly in $n\geq 1$. We will deduce this from the convergence of excursion clusters and continuity in the continuum. 
    
    Denote by $p(a,b)$ further the probability that there is an excursion cluster of the continuum GFF $\Phi$ on $D$ that crosses from $S_{a}$ to $S_{b}$ and observe that again, $p(a,a)=1$.

    \begin{prop} [Uniform continuity of crossing probabilities]\label{prop:perco}
        The following holds.
        \begin{enumerate}
            \item The function $p(a,b)$ is continuous on
            $\{a,b\in (0,1)^{2} : a\leq b\}$.
            \item The sequence $(p_{n}(a,b))_{n\geq 1}$ converges to
            $(p(a,b))_{n\geq 1}$ uniformly on compact subsets of 
            $\{a,b\in (0,1)^{2} : a\leq b\}$.
            \item In particular, the functions 
            $(a,b)\mapsto p_{n}(a,b)$ are continuous uniformly in $n\geq 1$ on any compact subset of $(0,1)^2$.
        \end{enumerate}
    \end{prop}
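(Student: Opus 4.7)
The plan is to derive (3) as an immediate consequence of (1) and (2), to obtain (2) from (1) via a Dini-type argument for monotone functions, and to concentrate the real work on proving (1). Granting (1) and (2), on any compact subset $K$ of $\{(a,b) : a \leq b\} \cap (0,1)^2$ the sequence $p_n$ converges uniformly to the continuous function $p$, which immediately yields equicontinuity of the family $(p_n)_{n \geq 1}$ on $K$, and hence uniform continuity of each $p_n$ in $n$. For (2), Theorem \ref{thm:conv} gives pointwise convergence $p_n(a,b) \to p(a,b)$, and both $p_n$ and $p$ are monotone in $(a,b)$ (non-decreasing in $a$ and non-increasing in $b$, because enlarging the inner square or shrinking the outer one shrinks the annulus and makes crossings easier) with $p$ continuous by (1), so a Dini-type argument upgrades pointwise to uniform convergence on compacta.

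For (1), monotonicity already gives continuity outside at most countably many points, so it suffices to control monotone limits from both sides of any $(a_0, b_0)$ with $a_0 < b_0$. For the ``inside'' limit $(a_n, b_n) \to (a_0, b_0)$ with $a_n \geq a_0$ and $b_n \leq b_0$, the annuli shrink, $E(a_n, b_n) \supseteq E(a_0, b_0)$, and the key is the almost sure equality $\bigcap_n E(a_n, b_n) = E(a_0, b_0)$. If $\omega$ lies in the intersection, each $n$ provides a connected crossing $K_n$ of the shrinking annulus inside some excursion cluster. Each $K_n$ has diameter bounded below by $(b_0 - a_0)/2$ for $n$ large, so by the local finiteness of the excursion decomposition (item (3) of Theorem \ref{thm:excdcmp}) only finitely many clusters are candidates; a pigeonhole argument singles out a fixed cluster $C_k$ producing $K_n$'s along a subsequence, and a subsequential Hausdorff limit of these connected subsets of the closed $C_k$ produces a connected closed subset of $C_k \cap ([-b_0, b_0]^2 \setminus (-a_0, a_0)^2)$ touching both $S_{a_0}$ and $S_{b_0}$, giving the desired crossing of the limit annulus. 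Monotone convergence then yields $p(a_n, b_n) \downarrow p(a_0, b_0)$.

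The ``outside'' limit $(a_n, b_n) \to (a_0, b_0)$ with $a_n \leq a_0$ and $b_n \geq b_0$ is the main obstacle. Here $E(a_n, b_n) \subseteq E(a_0, b_0)$ and one needs $\bigcup_n E(a_n, b_n) = E(a_0, b_0)$ almost surely, i.e.\ every crossing of the limit annulus must, almost surely, also be a crossing of some slightly larger annulus. The crucial input will be a non-tangency property: for each fixed $a_0 \in (0,1)$, almost surely every excursion cluster touching the square contour $S_{a_0}$ contains points on both sides of $S_{a_0}$ in every neighborhood of any intersection point. By item (4) of Theorem \ref{thm:prop}, each excursion cluster is the closed union of the Brownian loops forming a critical Brownian loop-soup cluster, and a planar Brownian trajectory that hits the fixed smooth contour $S_{a_0}$ almost surely crosses it transversally, visiting both sides near the hit. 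Combined with the connectedness of the BLS cluster and the intermediate-value property of $x \mapsto |x|_\infty$ applied to a connected subset of the cluster, this allows any crossing of the annulus at $(a_0, b_0)$ to be extended, inside $C_k$, to a crossing of the annulus at $(a_n, b_n)$ for $n$ large, closing the lower-semicontinuity half of the argument.
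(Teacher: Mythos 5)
Your overall architecture matches the paper's: (3) from (1) and (2), (2) from the cluster convergence of Theorem \ref{thm:conv} plus monotonicity, and (1) reduced to local finiteness together with a non-tangency statement for excursion clusters at a fixed contour $S_{a_0}$. The ``inside'' monotone limit via pigeonhole and Hausdorff limits of crossings is fine. The problem is your proof of non-tangency, which is where all the real content sits. You argue that an excursion cluster is the closed union of the Brownian loops of a critical loop-soup cluster and that a Brownian trajectory hitting $S_{a_0}$ a.s.\ visits both sides nearby. That handles only those points of $C_k\cap S_{a_0}$ that lie \emph{on} a Brownian loop. But $C_k$ is the topological \emph{closure} of the union of its loops, and the tangency event you must exclude is precisely the one your argument cannot see: all loops of the cluster lie strictly on one side of $S_{a_0}$ (none touches it, since the union of loops is connected and a loop touching $S_{a_0}$ would cross it), yet they accumulate at a point of $S_{a_0}$, so that the closed cluster meets $S_{a_0}$ without meeting the other side. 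Nothing in the transversality of individual Brownian paths rules out such a one-sided filament of ever-smaller loops, so the lower-semicontinuity half of (1) — and with it the whole ``outside'' limit — is not established. (A smaller omission: you restrict to $a_0<b_0$, but $p$ is claimed continuous on $\{a\le b\}$ with $p(a,a)=1$; continuity at the diagonal additionally requires that a.s.\ \emph{some} cluster intersects $S_{a_0}$, which the paper gets from the a.s.\ existence of a Brownian loop meeting $S_{a_0}$.)

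For contrast, the paper's Lemma \ref{lem non tangency} avoids path-level transversality entirely. Setting $p_{\mathrm{ext}}(a)$ and $p_{\mathrm{int}}(a)$ to be the probabilities of exterior and interior tangency at $S_a$, it first observes that each can be positive for at most countably many $a$, because a tangency at level $a$ forces $a$ to be an atom of $\sup\{a: C_k\cap S_a\neq\emptyset\}$ or $\inf\{a: C_k\cap S_a\neq\emptyset\}$ for some $k$. It then shows $a\mapsto p_{\mathrm{ext}}(a)$ and $a\mapsto p_{\mathrm{int}}(a)$ are non-decreasing, by coupling the loop soup in $D$ with loop soups in rescaled squares (restriction for $p_{\mathrm{int}}$, loop removal plus local finiteness for $p_{\mathrm{ext}}$) and invoking scale invariance. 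A non-decreasing function that is positive at some point is positive on an uncountable set, so both functions vanish identically. If you want to keep your route, you would need a separate argument excluding one-sided accumulation of a cluster at a deterministic contour; otherwise the soft countability-plus-monotonicity argument is the way to close the gap.
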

\begin{rem}        Note that $p(a,b)$ is not continuous on 
    $\{a,b\in [0,1]^{2} : a\leq b\}$, that is if one allows $a=0$ or $b=1$.
    For instance, 
    \begin{displaymath}
        \lim_{a\to 1} p(a,1) = 0 \neq \lim_{a\to 1} p(a,a) = 1.
    \end{displaymath}
\end{rem}

We now prove the proposition. 
    \begin{proof}[Proof of Proposition \ref{prop:perco}]
        Point (3) is a direct consequence of (1) and (2); and point (2) follows from the convergence of clusters; see Theorem \ref{thm:conv}.
        
Finally,
        (1) follows by combining the following 3 facts.
        \begin{itemize}
            \item First, given a fixed $a\in (0,1)$, a.s. there exists an excursion cluster of $\Phi$ intersecting $S_{a}$.
        This follows e.g. from the fact that a.s., there is a Brownian loop in the Brownian loop soup that intersects $S_{a}$.
        \item 
        Second, given a fixed $a\in (0,1)$, a.s., if an excursion cluster intersects
        $S_{a}$, then it also intersects $(-a,a)^{2}$ and 
        $D\setminus [-a,a]^{2}$, that is to say no excursion cluster is tangential to $S_{a}$.
        This is Lemma \ref{lem non tangency} below.
        \item         Third, for every $\varepsilon>0$, there are a.s. finitely many excursion sets of diameter larger than $\varepsilon$ - this is the local finiteness conditions in Theorem \ref{thm:excdcmp}.
        \end{itemize}

    \end{proof}

It remains to state and prove the above-mentioned Lemma \ref{lem non tangency}.
    \begin{lemma}[Non-tangency of excursion clusters]
    \label{lem non tangency}
    Fix $a\in (0,1)$. Then
    \begin{displaymath}
        \P(\exists\,C \text{ excursion cluster of } \Phi,
        C\cap S_{a}\neq\emptyset, \text{ but }
        C\cap(-a,a)^{2} = \emptyset \text{ or }
        C\cap(D\setminus [-a,a]^{2})=\emptyset)
        =0.
    \end{displaymath}
    \end{lemma}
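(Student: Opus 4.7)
My plan is to combine the Brownian loop soup description of the excursion clusters (Theorem~\ref{thm:prop}(4) and Proposition~\ref{thm:BLSGFF2}) with transversality of 2D Brownian motion and the regularity of CLE$_4$ loops. By local finiteness of excursion clusters (Theorem~\ref{thm:excdcmp}(3)), it is enough to show that each individual cluster $C$ in the listing is almost surely not tangent to $S_a$; the nested CLE$_4$/FPS structure (Remark~\ref{rem:cle}) then lets me reduce to outermost clusters, with nested levels handled identically by iteration inside already-explored loops.

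Fix such an outermost excursion cluster $C$, with outer CLE$_4$-type simple boundary $\ell$, so that $C$ equals $\A_{-2\lambda}$ of the GFF inside $\Int(\ell)$. The first key input is classical transversality of 2D Brownian motion: almost surely every single Brownian loop in the critical loop soup that meets the smooth curve $S_a$ actually crosses $S_a$, i.e.\ has points strictly inside $(-a,a)^2$ and strictly outside $[-a,a]^2$. Consequently, if $C$ is tangent to $S_a$, no Brownian loop belonging to the BLS cluster of $C$ can intersect $S_a$. Since loops within one BLS cluster are chained by intersections, and two loops on opposite sides of $S_a$ cannot intersect without at least one of them crossing $S_a$, all loops of the cluster must lie on a single side of $S_a$. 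It follows that the outer boundary $\ell$ lies on that same side, and the interior FPS $C\cap\Int(\ell)$ does too (any FPS mass on the other side of $S_a$ would come from the closure of loops on that side, contradicting one-sidedness). So the tangency of $C$ to $S_a$ reduces to $\ell$ making tangential contact with $S_a$.

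The main obstacle is then to rule out this one-sided tangency of a CLE$_4$ loop to the fixed smooth curve $S_a$. I plan to establish it via conformal invariance of CLE$_4$: locally conformally mapping a neighborhood of a putative tangency point with $S_a$ to the upper half-plane with $S_a$ sent to a real segment reduces the question to whether an SLE$_4$-type simple curve can touch an interior smooth arc on one side without crossing, which is ruled out by the known regularity and boundary behavior of SLE$_4$-like curves (alternatively, a Fubini argument exploiting the reflection symmetry across $S_a$ shows the expected $1$-dimensional measure of tangent contact points is zero, hence a.s.\ zero). Summing over the countably many CLE$_4$ loops at all nesting levels, using local finiteness to justify countable unions of null events, then yields the lemma.
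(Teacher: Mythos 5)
Your reduction steps (transversality of individual Brownian loops, chaining of loops within a cluster, one\hyp{}sidedness of the whole cluster) are fine as far as they go, but they only restate the problem: after them you are left with exactly the original question, namely whether a random fractal set coupled to the GFF can touch the fixed contour $S_a$ while staying weakly on one side of it. That is the entire content of the lemma, and your proposed resolution of it is a genuine gap. ``Known regularity and boundary behavior of SLE$_4$-like curves'' gives no such statement: the standard non-hitting results for SLE$_4$ concern the \emph{domain boundary}, whereas $S_a$ is an interior arc that the curve hits (indeed crosses) with positive probability, and there is no off-the-shelf theorem excluding one-sided tangential contact with a fixed interior smooth arc. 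Your fallback Fubini argument is logically invalid: the set of contact points a.s.\ has zero one-dimensional measure simply because it is a fractal of dimension $<1$, and zero expected length does not imply the set is a.s.\ empty. Two further (fixable but real) defects: for \emph{exterior} tangency the contact point need not lie on the outer boundary $\ell$ at all --- it can lie on the boundary of a hole of the first passage set filling $\Int(\ell)$ --- and, since the cluster is the \emph{closure} of the union of its loops, it can touch $S_a$ at an accumulation point even when every individual loop stays strictly on one side, so the problem is not localized to $\ell$.

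The paper avoids all SLE regularity by a soft argument you should compare with. For a cluster $C_k$ set $\underline{a}_k=\sup\{a: C_k\cap S_a\neq\emptyset\}$ and $\overline{a}_k=\inf\{a: C_k\cap S_a\neq\emptyset\}$; a tangency at level $a$ forces $a$ to be an atom of the law of one of these random variables for some $k$, so the set of levels $a$ with $p_{\rm int}(a)>0$ or $p_{\rm ext}(a)>0$ is at most countable. One then shows $a\mapsto p_{\rm int}(a)$ and $a\mapsto p_{\rm ext}(a)$ are non-decreasing by coupling the loop soups on nested squares (using the restriction property of the Brownian loop soup and scale invariance), and a non-decreasing function that is positive somewhere is positive on an uncountable set --- so both functions vanish identically. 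If you want to salvage your approach you would need to actually prove the one-sided non-tangency of $\A_{-2\lambda,2\lambda}$-type sets with $S_a$, which is harder than the lemma itself; the countability-of-atoms plus monotonicity trick is the intended shortcut.
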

    
    \begin{proof}
        Denote
        \begin{eqnarray*}
            p_{\rm ext}(a) &=&
            \P(\exists\,C \text{ excursion cluster of } \Phi,
            C\cap S_{a}\neq\emptyset, 
            C\cap(-a,a)^{2} = \emptyset )
            ,
            \\
            p_{\rm int}(a) &=&
            \P(\exists\,C \text{ excursion cluster of } \Phi,
            C\cap S_{a}\neq\emptyset, 
            C\cap(D\setminus [-a,a]^{2}) = \emptyset )
            .
        \end{eqnarray*}
        We claim that there can be at most countably many $a\in (0,1)$ such that
        $p_{\rm ext}(a)>0$ or $p_{\rm int}(a)>0$.
        Indeed, let be $(C_{k})_{k\geq 1}$ an enumeration of excursion clusters, for instance by decreasing diameter.
        Denote
        \begin{displaymath}
            \underline{a}_{k} = \sup\{a\in (0,1) : C_{k}\cap S_{a}\neq \emptyset\},
            \qquad
            \overline{a}_{k} = \inf\{a\in (0,1) : C_{k}\cap S_{a}\neq \emptyset\}.
        \end{displaymath}
        If $p_{\rm ext}(a)>0$, resp. $p_{\rm int}(a)>0$,
        then $a$ is an atom for the distribution of
        $\underline{a}_{k}$, resp. $\overline{a}_{k}$,
        for at least one of the $k\geq 1$.
        And the number of atoms of a probability distribution is at most countable.

        We further claim that the functions 
        $a\mapsto p_{\rm ext}(a)$ and
        $a\mapsto p_{\rm int}(a)$
        are both non-decreasing.
        Therefore, being empty is the only way for the sets $p_{\rm ext}^{-1}((0,1])$ and $p_{\rm int}^{-1}((0,1])$ to be at most countable.

        First, let us explain the monotonicity of $a\mapsto p_{\rm int}(a)$.
        Let $a<b\in (0,1)$.
        Let $\Phi_{2}$ be the continuum GFF on the smaller square
        $(-a/b, a/b)^{2}$, with $0$ boundary conditions.
        The excursion sets of $\Phi$ and $\Phi_{2}$ are naturally coupled,
        by using the same Brownian loop soup in $D$.
        The restriction of this Brownian loop soup to $(-a/b, a/b)^{2}$
        is a Brownian loop soup in $(-a/b, a/b)^{2}$. 
        In this coupling, an excursion set of $\Phi$ that is contained in
        $(-a/b, a/b)^{2}$ is also an excursion set of $\Phi_{2}$.
        Therefore,
        \begin{displaymath}
            p_{\rm int}(a) \leq 
            \P(\exists\,C \text{ excursion cluster of } \Phi_{2},
            C\cap S_{a}\neq\emptyset, 
            C\cap((-a/b, a/b)^{2}\setminus [-a,a]^{2}) = \emptyset ).
        \end{displaymath}
        But, by scaling, the right-hand side above equals $p_{\rm int}(b)$.

        The monotonicity of $a\mapsto p_{\rm ext}(a)$ is only slightly more complicated.
        Let $a<b\in (0,1)$.
        Let $\Phi_{3}$ be the continuum GFF on the larger square
        $(-b/a, b/a)^{2}$, with $0$ boundary conditions.
        We couple the excursion clusters of $\Phi$ and $\Phi_{3}$
        by using the same Brownian loop soup on $(-b/a, b/a)^{2}$.
        If $C$ is an excursion set of $\Phi_{3}$ such that
        $C\cap S_{b}\neq \emptyset$ and $C\cap (-b,b)^{2}=\emptyset$,
        then by removing the Brownian loops intersecting
        $(-b/a, b/a)^{2}\setminus D$, 
        the cluster $C$ splits into countably many clusters
        $(C'_{j})_{j\geq 1}$, each of the $C'_{j}$ being an excursion cluster for $\Phi$.
        Since no Brownian loop is tangent to $S_{b}$, and by local finiteness of the excursion clusters of $\Phi$,
        at least one of the $C'_{j}$ has to intersect $S_{b}$,
        and therefore is also tangent to $S_{b}$. Thus,
        \begin{displaymath}
            p_{\rm ext}(b) \geq 
            \P(\exists\,C \text{ excursion cluster of } \Phi_{3},
            C\cap S_{b}\neq\emptyset, 
            C\cap[-b,b]^{2} = \emptyset ).
        \end{displaymath}
        By scaling, the right-hand side above equals $p_{\rm ext}(a)$.
    \end{proof}

   \subsection{2D continuum GFF as a limit of spin models and the corresponding FK representation}
    
    Recall the correspondence between the Ising model and its FK representation:

    \begin{thm}[FK representation of the Ising model]
    Let $(\sigma_v)_{v \in V}$ be the free boundary Ising model on a graph $G = (V, E)$ with inverse temperature $\beta$ and edge-weights $(J_e)_{e \in E}$. 
    
    Then its FK representation is the bond percolation configuration $(\omega_e)_{e \in E}$ on $G$, defined on the same probability space, that satisfies the following properties with $p_e = 1-e^{-2\beta J_e}$:
    \begin{enumerate}
    \item Its marginal law is given by a bond percolation with the law: 
    $$\P(\omega_e) \propto \Pi_{e \in E}p_e^{\omega_e}(1-p_e)^{1-\omega_e}2^{\#\text{clusters}}$$
    \item Its conditional law given $(\sigma_v)_{v \in V}$ is obtained by picking for each edge and independent Ber$(p_e)$ random variable and setting $\omega_e = 1$ if and only if $\sigma_v = \sigma_w$ and this random variable is equal to $1$. 
    \end{enumerate}
    \end{thm}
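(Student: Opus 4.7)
The plan is to realize both $\sigma$ and $\omega$ as the two marginals of a single explicit coupling on $\{-1,+1\}^{V} \times \{0,1\}^{E}$ (the Edwards--Sokal joint distribution), and then read off (1) and (2) by direct marginal/conditional computations.

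First, I would establish the algebraic identity
\begin{equation*}
e^{\beta J_e \sigma_v \sigma_w} \;=\; e^{\beta J_e}\bigl[(1-p_e) + p_e \mathbf{1}_{\sigma_v = \sigma_w}\bigr],\qquad p_e = 1-e^{-2\beta J_e},
\end{equation*}
which follows by checking the two cases $\sigma_v\sigma_w = \pm 1$. Viewing the right-hand side as a sum over an auxiliary binary bond variable $\omega_e \in \{0,1\}$ and multiplying over $e \in E$ motivates the joint law
\begin{equation*}
\mu(\sigma,\omega) \;\propto\; \prod_{e = \{v,w\}\in E} \bigl[(1-p_e)\mathbf{1}_{\omega_e = 0} + p_e \mathbf{1}_{\omega_e = 1} \mathbf{1}_{\sigma_v = \sigma_w}\bigr].
\end{equation*}
Summing out $\omega$ returns the Boltzmann weight $\prod_e e^{\beta J_e\sigma_v\sigma_w}$ up to a deterministic multiplicative constant, so the $\sigma$-marginal of $\mu$ is the free boundary Ising measure; hence $\mu$ is a legitimate coupling, and its $\omega$-marginal will provide the claimed FK representation.

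For claim (1), I would sum over $\sigma$ for fixed $\omega$: the product of indicators forces the spin to be constant on every connected component of $\omega$, so there are exactly $2^{\#\text{clusters}(\omega)}$ compatible spin configurations, yielding
\begin{equation*}
\sum_{\sigma} \mu(\sigma,\omega) \;\propto\; \prod_{e \in E} p_e^{\omega_e}(1-p_e)^{1-\omega_e}\cdot 2^{\#\text{clusters}(\omega)},
\end{equation*}
which is the claimed random-cluster law. For claim (2), I observe that in $\mu(\sigma,\omega)$ each edge factor depends on $\omega$ only through the single coordinate $\omega_e$, so conditionally on $\sigma$ the bond variables $(\omega_e)_{e\in E}$ are independent; the factor vanishes for $\omega_e = 1$ whenever $\sigma_v \neq \sigma_w$, and otherwise $\omega_e$ is Bernoulli with parameter $p_e/[(1-p_e) + p_e] = p_e$. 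This is precisely the two-step sampling prescription described in (2).

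The whole proof is essentially a reorganization of a product, so there is no serious analytic obstacle; the only step requiring mild care is tracking the normalizing constants across the marginal computations, which reduces to the observation that the coupling $\mu$ and the Ising measure share the same partition function $\sum_{\sigma,\omega}\mu(\sigma,\omega) = \sum_\sigma \prod_e e^{\beta J_e\sigma_v\sigma_w}$ up to the edgewise $e^{\beta J_e}$ prefactors.
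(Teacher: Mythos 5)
Your argument is correct and complete: the two-case verification of the identity $e^{\beta J_e \sigma_v \sigma_w} = e^{\beta J_e}\bigl[(1-p_e) + p_e \mathbf{1}_{\sigma_v = \sigma_w}\bigr]$, the construction of the Edwards--Sokal joint measure, and the two marginal/conditional computations are all sound, and together they establish both claims. The paper itself states this theorem as a recalled classical fact and offers no proof, so there is nothing to compare against; your Edwards--Sokal coupling is the standard and expected route.
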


    Now, consider the zero boundary discrete GFF (DGFF) $\phi^G$ on a graph $G = (V, E)$ and let $G_i$ be the subgraph induced by interior vertices. One can interpret its sign field $\sigma^G(v) := \operatorname{sign}(\phi^G(x))$ defined on the graph $G_i$ as a random field Ising model, with temperature $1$ and the random coupling constants given by $J_e = |\phi^G(v)\phi^G(w)|$ for any edge $e = (v,w)$. A nice observation of \cite{LupuWerner16Ising} is that the sign clusters of the naturally related metric graph GFF $\tilde{\phi}^G$ form a FK-representation of this random field Ising model:

    \begin{prop}[FK representation via metric graph GFF]
    Consider a zero boundary DGFF $\phi^G$ on a graph $G=(V,E)$ and the zero boundary metric graph GFF $\tilde{\phi}^G$ obtained by extrapolating $\phi^G$ to the line graph $\tilde G$ of $G$ using independent Brownian bridges over each edge $(v,w)$ of time-length $1$ and endpoints $\phi^G(v), \phi^G(w)$. 
    
    Consider now the bond percolation $(\omega_e)_{e\in E_i}$ on $G_i$ where we set $\omega_e = 1$ if and only if $\tilde{\phi}^G$ has the same sign throughout the edge $e$. Then $\omega_e$ forms a FK-representation of the random field Ising model $\operatorname{sign}(\phi^G)$. 
    \end{prop}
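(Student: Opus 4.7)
My plan is to compute the conditional law of $(\omega_e)_{e \in E_i}$ given the full field $\phi^G$ (not merely its signs) and to recognize the result as the Edwards--Sokal coupling for the random-field Ising model of $\sigma^G := \operatorname{sign}(\phi^G)$ with couplings $J_e := |\phi^G(v)\phi^G(w)|$. By the very construction of $\tilde\phi^G$, the Brownian bridges on distinct edges are conditionally independent given $\phi^G$, so the $(\omega_e)$ will also be conditionally independent, and the task reduces to a single-edge computation.

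For a fixed edge $e = (v,w)$ I distinguish two cases. If $\phi^G(v)$ and $\phi^G(w)$ have opposite signs, the bridge crosses zero by the intermediate value theorem and hence $\omega_e = 0$ almost surely. Otherwise, assuming both values positive and equal to $x = \phi^G(v)$, $y = \phi^G(w)$, I invoke the classical reflection-principle formula: a Brownian bridge from $x$ to $y$ of time-length one stays strictly positive with probability $1 - e^{-2xy}$. This yields
\[
\mathbb{P}(\omega_e = 1 \mid \phi^G) = \mathbf{1}_{\{\sigma^G(v) = \sigma^G(w)\}}\bigl(1 - e^{-2 J_e}\bigr),
\]
with independence across edges.

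To close the loop with the FK characterization recalled in the statement, I then check that, conditionally on $|\phi^G|$, the sign field $\sigma^G$ follows the random-field Ising distribution at inverse temperature $\beta = 1$ with couplings $J_e$. This is a direct expansion of the Gaussian DGFF density:
\[
-\tfrac{1}{2}\sum_{e = (v,w)}(\phi_v - \phi_w)^2 = -\tfrac{1}{2}\sum_{e=(v,w)}(\phi_v^2 + \phi_w^2) + \sum_{e=(v,w)} J_e\, \sigma^G_v \sigma^G_w,
\]
so the first summand depends on $|\phi^G|$ alone, and the conditional density of $\sigma^G$ given $|\phi^G|$ is proportional to $\exp\bigl(\sum_e J_e\, \sigma^G_v \sigma^G_w\bigr)$, which is exactly the RFIM Hamiltonian. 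Combining the two ingredients produces the Edwards--Sokal coupling with $p_e = 1 - e^{-2 J_e}$, i.e.\ the claimed FK representation. I do not foresee a real obstacle: the only nontrivial input is the Brownian-bridge hitting formula, and the remainder is bookkeeping with Gaussian densities.
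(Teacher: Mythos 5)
Your proof is correct and is exactly the intended argument: the paper itself states this proposition without proof, deferring to Lupu--Werner \cite{LupuWerner16Ising}, where the same two ingredients appear --- the bridge non-hitting probability $1-e^{-2xy}$ giving the conditional Bernoulli bond law, and the expansion of the Gaussian Hamiltonian identifying $\operatorname{sign}(\phi^G)$ given $|\phi^G|$ as the Ising model with couplings $J_e=|\phi^G(v)\phi^G(w)|$ at $\beta=1$, after which Edwards--Sokal yields the random-cluster marginal.
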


    Recall from Theorem \ref{thm:conv a} that the sign clusters of the metric graph GFF $\tilde \phi_n$, defined on the lattice approximation of a simply connected domain $D$ and converging to a continuum GFF $\Phi^D$, converge to our excursion decomposition of $\Phi^D$. This joined together with the following proposition explains why it is justified to call our excursion decomposition the FK-representation of the continuum GFF. Notice that interestingly this FK-representation is not measurable w.r.t. the field in the discrete, yet becomes measurable in the limit!

    \begin{prop}[Continuum GFF as a limit of random field Ising model]\label{prop:GFFspin}
    
Consider a sequence of lattice graphs $ D_n\subseteq (2^{-n}\Z)^2$ converging to a bounded and simply connected domain $D\subseteq \C$ in the sense that their complements inside some large box $[-C, C]^2 \supseteq D$ converge in the Hausdorff topology. 

Let $\phi_n$ be the zero boundary DGFF defined on $D_n$, set $c_1 := \sqrt{\pi/2}$ and define $s_n(v) := c_1\sqrt{\E\left[\phi_n(v)^2\right]}\sgn{\phi_n}$ to be sign of the DGFF defined in the interior of $D_n$. Then for any continuous bounded compactly supported $f$ on $D$, if we denote by $f_n$ its restriction to the interior of $D_n$, we have that as $n \to \infty$, $$\E\left[(\phi_n - s_n, f_n)^2\right] \to 0.$$
Here $(f_n,g_n) := n^{-2}\sum_{v \in D_n}f_n(v)g_n(v)$ is chosen such that it converges to the continuum inner product.

Further, if $\phi$ is the zero boundary GFF on $D$ and $\phi_n \to \phi^D$ in probability in $H^{-1-\eps}(D)$ (when properly interpolated), then also $s_n \to \phi^D$ in probability in $H^{-1-\eps}(D)$.
    \end{prop}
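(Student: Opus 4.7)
The plan is to exploit two classical identities for jointly centred Gaussian pairs. First, if $(X,Y)$ is centred Gaussian with $\sigma_Y^2 = \operatorname{Var}(Y)$ and $r = \operatorname{Cov}(X,Y)$, then $\E[X\sgn(Y)] = r\sqrt{2/\pi}/\sigma_Y$. Second, if $\operatorname{Var}(X) = \operatorname{Var}(Y) = 1$, then $\E[\sgn(X)\sgn(Y)] = (2/\pi)\arcsin(\operatorname{Cov}(X,Y))$. The normalization $c_1 = \sqrt{\pi/2}$ is calibrated precisely so that $c_1\sqrt{2/\pi}=1$. Applied to $(\phi_n(v),\phi_n(w))$, the first identity yields the key equality $\E[\phi_n(v) s_n(w)] = G_n(v,w)$, so the cross-covariance of $\phi_n$ with $s_n$ coincides with the auto-covariance of $\phi_n$. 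The second identity gives $\E[s_n(v) s_n(w)] = \sqrt{G_n(v,v)G_n(w,w)}\arcsin(\rho_n(v,w))$, where $\rho_n(v,w) := G_n(v,w)/\sqrt{G_n(v,v)G_n(w,w)}$.

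Expanding the square and using the cross-covariance identity produces the clean cancellation
\begin{equation*}
\E[(\phi_n - s_n, f_n)^2] = \E[(s_n, f_n)^2] - \E[(\phi_n, f_n)^2] = \frac{1}{n^4}\sum_{v,w \in D_n} f_n(v) f_n(w) \sqrt{G_n(v,v) G_n(w,w)}\,\eta(\rho_n(v,w)),
\end{equation*}
where $\eta(\rho) := \arcsin(\rho) - \rho$. Since $\eta$ has Taylor series $\rho^{3}/6 + 3\rho^{5}/40 + \cdots$ with positive coefficients, the ratio $\eta(\rho)/\rho^2$ is increasing on $[0,1]$, giving the uniform bound $0 \leq \eta(\rho) \leq (\pi/2 - 1)\rho^2$. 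Thus the integrand is bounded by $C |f_n(v) f_n(w)| G_n(v,w)^2/\sqrt{G_n(v,v)G_n(w,w)}$. Since $f$ is compactly supported in $D$, the sum is restricted to $v,w$ in a compact subset of $D$, where $G_n(v,v) \gtrsim \log n$ uniformly and $G_n(v,w) \lesssim 1 + |\log|v-w||$. Combining these yields $\E[(\phi_n - s_n, f_n)^2] \lesssim \|f\|_\infty^2 (\log n)^{-1} \iint_{D\times D}(1 + |\log|v-w||)^2\,dv\,dw \to 0$, establishing the first claim.

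For the $H^{-1-\eps}$ statement it suffices to prove $\E[\|\phi_n - s_n\|_{H^{-1}(D)}^2] \to 0$: mean-square convergence in $H^{-1}$ implies convergence in probability in the larger space $H^{-1-\eps}$, after which the triangle inequality combined with the assumed convergence $\phi_n \to \Phi$ in probability in $H^{-1-\eps}$ yields the statement. The $H^{-1}$ mean-square norm equals the same double integral as above but with $f_n(v)f_n(w)$ replaced by the continuum Green's function $G_D(v,w) \lesssim 1 + |\log|v-w||$. In the bulk the same argument delivers a bound of order $(\log n)^{-1}$. The main technical obstacle is the joint near-diagonal and near-boundary region, where $G_n(v,v)$ can fail to be of order $\log n$ or $\rho_n$ may not be small, so the quadratic bound on $\eta$ becomes inefficient; there one has to split according to whether $|v-w|$ is below or above $n^{-1/2}$ and use the sharper cubic bound $\eta(\rho) \leq C\rho^3$ off the diagonal together with the crude bound $\eta \leq \pi/2$ on the diagonal, combined with the boundary decay of $G_D$ and standard lattice Green's function asymptotics (as in Section~4.1 of \cite{ALS2}). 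The diagonal-layer contribution is of order $(\log n)^2/n$ and the off-diagonal part is of order $(\log n)^{-2}$, and both vanish.
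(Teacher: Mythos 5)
Your proposal is correct, and for the first claim it is actually cleaner than the paper's argument. The paper proves a perturbative lemma ($\E[\sgn X\sgn Y]=\tfrac{2}{\pi}\rho+O(\rho^2)$, $\E[X\sgn Y]=\sqrt{2/\pi}\,\rho+O(\rho^2)$ for small correlation), expands all four terms of the square, and splits the double sum into a far-from-diagonal part (where the lemma applies and yields a $(\log n)^{-1}$ gain from the growing variances) and a near-diagonal part bounded by $O(\eps^2\log|\eps|)$. You instead use the exact identities $\E[X\sgn Y]=\sqrt{2/\pi}\,r/\sigma_Y$ and the arcsine law, which makes the cross term cancel exactly against $\E[(\phi_n,f_n)^2]$ and reduces everything to the single nonnegative kernel $\sqrt{G_n(v,v)G_n(w,w)}\,\eta(\rho_n)$ with $\eta(\rho)=\arcsin\rho-\rho\leq(\pi/2-1)\rho^2$; this removes the need for a separate diagonal treatment and any smallness assumption on $\rho$. (The paper explicitly remarks after its lemma that the arcsine formula could be used instead, so your route is an anticipated variant, executed more fully.) For the second claim the two approaches genuinely diverge: the paper only establishes a uniform-in-$n$ bound on $\E\|\tilde s_n\|_{H^{-1}}^2$, deduces tightness in $H^{-1-\eps}$ by compact embedding, and identifies the limit through the test-function pairings of the first part, whereas you aim for the stronger statement $\E\|\phi_n-s_n\|_{H^{-1}(D)}^2\to 0$. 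Your route is viable (the kernel of the difference is again $\sqrt{G_nG_n}\,\eta(\rho_n)\geq 0$ by the same cross-covariance cancellation, bounded near the boundary simply by $\tfrac{\pi}{2}G_n$, whose contribution against $G_D$ over a thin boundary strip is small uniformly in $n$), but it is exactly this near-boundary region where your sketch is thinnest and where the specific exponents you quote are not obviously the right ones; the paper's tightness-plus-identification argument buys you the luxury of never having to control $\eta(\rho_n)$ where the variances stop being large. Also note a shared notational wrinkle: with mesh $2^{-n}$ the on-diagonal Green's function grows like $n$ rather than $\log n$, so your $(\log n)^{-1}$ rates are conservative but the convergence is unaffected.
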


This follows from a simple computation, based on the following elementary lemma: 

\begin{lemma}
Let $X, Y$ be jointly Gaussian with variance $1$ and correlation $\rho \ll 1$. Then $\E(\sgn(X)\sgn(Y)) = \frac{2}{\pi}\rho + O(\rho^2)$ and $\E(X\sgn(Y)) = \sqrt{\frac{2}{\pi}}\rho + O(\rho^2)$.
\end{lemma}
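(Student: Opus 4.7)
The plan is to establish both formulas directly using standard facts about jointly Gaussian vectors. Since $X$ and $Y$ have unit variance, one can write $X = \rho Y + \sqrt{1-\rho^2}\, Z$ where $Z$ is a standard Gaussian independent of $Y$; both estimates follow quickly from this decomposition, together with the classical Grothendieck identity for the sign-sign case.

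First, I would treat the second identity, which is the cleaner of the two. Plugging the decomposition in gives
$$\E[X\,\sgn(Y)] = \rho\,\E[Y\,\sgn(Y)] + \sqrt{1-\rho^2}\,\E[Z]\,\E[\sgn(Y)] = \rho\,\E[|Y|] = \sqrt{\tfrac{2}{\pi}}\,\rho,$$
using $\E[Z]=\E[\sgn(Y)]=0$ and $\E[|Y|] = \sqrt{2/\pi}$. Thus the second formula holds \emph{exactly}, and the $O(\rho^2)$ error term is in fact zero.

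Next, for the first identity, I would invoke the Grothendieck identity
$$\E[\sgn(X)\,\sgn(Y)] = \frac{2}{\pi}\arcsin(\rho).$$
A clean way to derive it is to differentiate in $\rho$ under the integral sign in the explicit form of the joint Gaussian density (justified by dominated convergence), which yields the ODE $\frac{d}{d\rho}\E[\sgn(X)\sgn(Y)] = \frac{2}{\pi\sqrt{1-\rho^2}}$, together with the boundary value $0$ at $\rho=0$; this integrates to the $\arcsin$ formula. Alternatively, one can represent $(X,Y)$ in polar coordinates and compute the probability that they lie in the same half-plane directly. Taylor expanding $\arcsin(\rho)=\rho+O(\rho^3)$ then gives
$$\E[\sgn(X)\,\sgn(Y)] = \tfrac{2}{\pi}\rho + O(\rho^3),$$
which is stronger than the claimed $O(\rho^2)$ bound.

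Neither step presents a real obstacle: both computations are routine Gaussian analysis. The only mild subtlety is the justification of differentiating under the integral sign for the Grothendieck identity, which is handled by a standard dominated convergence argument (or one may simply cite the identity from a textbook reference).
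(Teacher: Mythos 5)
Your proof is correct, and both of your bounds are in fact sharper than what the lemma claims: the identity $\E(X\sgn(Y))=\sqrt{2/\pi}\,\rho$ is exact, and the Grothendieck/Sheppard formula $\E(\sgn(X)\sgn(Y))=\frac{2}{\pi}\arcsin(\rho)$ gives an $O(\rho^3)$ error. However, your route differs from the paper's. The paper expands the joint density of $(X,Y)$ as a perturbation of the product of marginals, $\bigl(1+\rho xy+O(\rho^2x^2y^2)\bigr)$ times an independent Gaussian density, and then evaluates $\E(\sgn(X)\sgn(Y))=\rho\,\E(|\tilde X|)^2+O(\rho^2)$ directly; it only mentions your decomposition $Y=\rho X+Z$ as an alternative for the second identity, and it explicitly notes the $\arcsin$ formula in a remark immediately after the proof. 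The reason the authors prefer the cruder perturbative expansion is stated in the remark following the proposition: the same density-expansion argument generalizes to the angle of a vector-valued (multi-component) GFF, where no closed-form analogue of the $\arcsin$ identity is available, whereas your exact scalar identities do not carry over. So your argument is a perfectly valid and indeed cleaner proof of the lemma as stated, at the cost of being tied to the one-dimensional sign function; the only point to be careful about is the justification of differentiating under the integral sign (or of the polar-coordinate computation) for the Grothendieck identity, which you correctly flag and which is standard.
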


\begin{proof}
The density of $(X,Y)$ at $(x,y)$ is given by $$\frac{1}{2\pi\sqrt{1-\rho^2}}\exp\left(-\frac{1}{2(1-\rho^2)}(x^2+y^2-\rho xy)\right).$$
For $\rho \ll 1$, we can write it as a perturbation of the independent vector
$$\frac{1}{2\pi\sqrt{1-\rho^2}}\exp\left(-\frac{1}{2(1-\rho^2)}(x^2+y^2)\right)(1+\rho xy + O(\rho^2x^2y^2)).$$
We can now directly calculate 
$$\E(\sgn(X)\sgn(Y)) = \rho \E(XY/|XY|) = \rho \E(|\tilde X|)^2 + O(\rho^2),$$
where $\tilde X$ is a Gaussian of variance $1-\rho^2$. This gives us 
$$\E(\sgn(X)\sgn(Y)) = \frac{2}{\pi}\rho+O(\rho^2).$$
The other calculation can be done similarly or by writing $Y = \rho X + Z$ with $Z$ and independent Gaussian of variance $1-\rho^2$.
\end{proof}

One could in fact avoid the lemma above by using an explicit formula for the correlation of signs of joint Gaussians: $\E(\sgn(X)\sgn(Y)) = \frac{2}{\pi}\arcsin(\rho)$, with notations as in the lemma. However, this lemma also generalizes to the case of the angle of a vector-valued GFF, see the remark just after the proof.
\begin{proof}[Proof of Proposition \ref{prop:GFFspin}]

We start by noting that $\E\left[(\phi_n - s_n, f_n)^2\right]$ is equal to
$$n^{-2}\sum_{v,w \in D_n}f_n(v)f_n(w)\E\left[\phi_n(v)\phi_n(w) + s_n(v)s_n(w) - \phi_n(v)s_n(w)-\phi_n(w)s_n(v) \right].$$
But now $(\phi_n(v), \phi_n(w))$ is a Gaussian vector with variance $\E\left[\phi_n(v)^2\right], \E\left[\phi_n(w)^2\right]$ and correlation $\E\left[\phi_n(v)\phi_n(w)\right]$ given by the zero boundary Green's function. In particular, as the former grow like $c\log N$ and the latter remains bounded for $\|v-w\|_2 > \eps $ for any $\eps > 0$, we can apply the lemma outside of the near-diagonal $\|v-w\|_2 \leq \eps$ to obtain that the sum over $\|v-w\|_2 \geq \eps$ is bounded by $c_\eps (\log n)^{-1}$ and thus converges to zero as $n \to \infty$. The diagonal part can be bounded by $O(\eps^2\log |\eps|)$ by a direct calculation. As this holds for any $\eps > 0$, we obtain the first claim.

To see the final part of the proposition, notice that we can similarly bound the $H^{-1}$-norm of $s_n$ interpolated sufficiently nicely over the squares (e.g. linearly over edges and the harmonically inside the squares). Indeed, denoting this interpolation by $\tilde s_n$, its expected squared $H^{-1}$-norm is given by $$\int_D\int_D G(z,w) \E [\tilde s_n(z) \tilde s_n(w)]dzdw $$ and we can use the lemma above to bound it uniformly in $n$. This gives tightness in $H^{-1-\eps}$ and thus the claim follows. 
\end{proof}

    \begin{rem}
  As mentioned above, if one considers vector-valued DGFFs, i.e. a vector $(\phi_n^1, \dots, \phi_n^d)$ of independent GFFs, one can generalize the lemma above and then the same proof shows that the angle of the DGFF also converges to the continuum vector-valued GFF, with $c_1$ replaced by another constant $c_d$. This adds yet another layer to the connection between the spin $O(N)$-models and vector-valued GFF, see e.g. \cite{AGS} for the usefulness of such connections.
    \end{rem}
    
 \subsection{Conjectured limit of the excursion decomposition of the DGFF}

 We finish the article by discussing the scaling limit of the excursion decomposition of the discrete GFF. Let us start by stating the conjecture in a slightly informal way (a precise statement would be similar to Theorem \ref{thm:conv a}). 
 
 The set-up is as follows. We consider a sequence of lattice graphs $ D_n\subseteq (2^{-n}\Z)^2$ converging to a bounded and simply connected domain $D\subseteq \C$ in the sense that their complements inside some large box $[-C, C]^2 \supseteq D$ converge in the Hausdorff topology. As before, we denote by $\phi_n$ the zero boundary DGFF defined on $D_n$. We will write $(E^n_k, \theta^n_k, \mu^n_k)_{k \geq 1}$ for the excursion decomposition of $\phi_n$, where the components the denote the sign excursion clusters, their signs and the DGFF restricted to them. 

 \begin{con}\label{conj:dgff}
 Consider a sequence of lattice graphs $ D_n\subseteq (2^{-n}\Z)^2$ converging to a bounded and simply connected domain $D\subseteq \C$ in the sense that their complements inside some large box $[-C, C]^2 \supseteq D$ converge in the Hausdorff topology. 

Let $\phi_n$ be the zero boundary DGFF defined on $D_n$, such that $\phi_n \to \Phi$ almost surely in, say, $H^{-\eps}$ and let $(E^n_k, \theta^n_k, \mu^n_k)_{k \geq 1}$ be the excursion decompositions of the DGFFs. Then these converge to a decomposition $(E_k, \theta_k, \mu_k)_{k \geq 1}$ described as follows:
\begin{itemize}
\item The union of outermost positive clusters is given by $\A_{-\lambda}$ and the union of outermost negative clusters by $\A_{\lambda}$. Each individual cluster is given by taking $\A_{-\lambda}$ or $\A_{\lambda}$ in the holes of $\A_{-\lambda, \lambda}$ of sign $\lambda$ or $-\lambda$ respectively.
\item Further clusters are defined recursively: in the holes surrounded by each negative cluster (i.e. a hole with boundary value $\lambda$) a positive cluster is given by $\A_{-\lambda}$ and in the holes surrounded by a positive cluster we obtain negative clusters by taking $\A_{\lambda}$.
\end{itemize}
Further, given the clusters, the signs are determined up to a global multiplication by $-1$ and the sign excursions $(\mu_k)_{k \geq 1}$ are given by the Minkowski content measures of $(E_k)_{k \geq 1}$ in the same gauge as in Theorem \ref{thm:prop}
 \end{con}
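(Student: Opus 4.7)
The plan is to mirror the strategy of Theorem \ref{thm:conv}, with the key modification that for the DGFF the interfaces separating sign clusters correspond to discrete level lines of the field at height $0$ rather than zero-sets of a metric graph extension. In the scaling limit these interfaces should therefore converge to the two-valued set $\A_{-\lambda,\lambda}$ of the continuum GFF (not $\A_{-2\lambda,2\lambda}$), and the alternating sign structure of the conjecture is then forced: on the boundary of an outermost positive cluster the harmonic extension of $\Phi$ equals $-\lambda$, so any sub-cluster inside it must be negative, and one iterates by induction.

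First I would establish tightness of $(\phi_n, (E_k^n, \theta_k^n, \mu_k^n)_{k\geq 1})$. The proof of Lemma \ref{lem:L2L4} uses only that the cluster signs are conditionally i.i.d. Rademacher variables given the cluster decomposition, a property which carries over verbatim to the DGFF; this controls the tails of $(\mu_k^n)$. Combined with the tightness of $\phi_n$ and the Hausdorff compactness of closed subsets of $\overline D$, this gives tightness in the appropriate product topology, and Skorokhod representation yields an almost sure subsequential limit $(\Phi, (E_k, \theta_k, \mu_k)_{k\geq 1})$.

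The heart of the argument is to identify this subsequential limit. First, using convergence of DGFF level lines to $\mathrm{SLE}_4$, one needs to promote single-interface convergence into joint, locally finite convergence of the full nested collection of boundaries of outermost DGFF sign clusters to $\A_{-\lambda,\lambda}$ of $\Phi$; this is the DGFF counterpart of \cite{LupuConvCLE}. Given the outermost interfaces, fix a $+\lambda$-labelled hole $\mathcal O$ of $\A_{-\lambda,\lambda}$, on which $\Phi\vert_{\mathcal O}$ is distributed as a zero-boundary GFF shifted by $+\lambda$. By Lemma \ref{lem:BPLS} the limit of the enclosing DGFF sign cluster is a local set of $\Phi\vert_{\mathcal O}$; its complement components are separated from $\partial\mathcal O$ by the next layer of converging level-line interfaces, so the harmonic extension on each such component equals $-\lambda$; and the corresponding measure is nonnegative as a weak limit of nonnegative discrete measures. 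These are precisely the defining properties of Definition \ref{Def ES} for the FPS $\A_{-\lambda}$ inside $\mathcal O$, so uniqueness of FPS (Theorem \ref{Thm::FPS}) identifies the limit cluster. The $-\lambda$ holes are handled by sign-flip symmetry, and iterating inside each cluster extends the identification to all nesting levels, with alternating signs. Finally the identification of $\mu_k$ with the Minkowski content measure $\nu_{\A_{-\lambda}}$ proceeds via a no-extra-mass argument modeled on the proof of Claim \ref{c:conv_FPS}: Lemma \ref{lem:L2L4} together with Corollary 4.5 of \cite{ALS2} upgrades the lower bound $\mu_k \geq \nu_{\A_{-\lambda}}$ coming from the Markov decomposition to equality, with the correct rescaling of DGFF values provided by Proposition \ref{prop:GFFspin} and the Minkowski-content characterization of Theorem \ref{Min}.

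The hard part will be the interface convergence step. Unlike the metric graph case, DGFF sign-cluster boundaries are not zero-sets of the underlying field, so one must show that the jumps of $\sgn(\phi_n)$ across lattice edges concentrate in the scaling limit exactly on $\A_{-\lambda,\lambda}$, with no spurious filament-like or microscopic residual clusters surviving in the limit. A natural additional ingredient would be a uniform-in-scale crossing estimate for DGFF sign clusters, analogous to Proposition \ref{prop:perco}, which is itself delicate because the underlying Brownian loop soup structure exploited for the metric graph GFF is no longer available here.
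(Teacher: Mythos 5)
The statement you are addressing is stated in the paper as Conjecture \ref{conj:dgff}: the paper offers only a heuristic (convergence of individual discrete level lines to SLE$_4(-1,-1)$ from \cite{SchSh}, plus the identification of the iterated exploration with $\A_{-\lambda}$ via uniqueness of first passage sets), and it explicitly notes that the needed convergence ``does not directly follow from \cite{SchSh}''. Your proposal is essentially this same heuristic dressed in the machinery of Section \ref{s:convergence}, and you correctly isolate the missing ingredient --- joint, locally finite convergence of the full nested family of DGFF sign-cluster boundaries to iterated $\A_{-\lambda,\lambda}$, i.e.\ a DGFF analogue of \cite{LupuConvCLE} --- but you do not supply it. That step is precisely what makes the statement a conjecture rather than a theorem, so what you have is a plausible strategy, not a proof.

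Beyond that open step there is a concrete error. You claim that Lemma \ref{lem:L2L4} ``carries over verbatim'' because the cluster signs are conditionally i.i.d.\ Rademacher given the decomposition. For DGFF sign clusters this is false: two sign clusters sharing a lattice edge have opposite signs by maximality, the adjacency graph of the clusters is connected, and hence the signs $(\theta^n_k)_{k}$ are determined up to a single global flip --- exactly as the conjecture itself asserts in the limit, and the opposite of independence. Consequently the odd cross terms in the expansion of $\mathbb{E}[(\phi_n,f)^{2q}]$ do not vanish, the identity \eqref{eq:L2eq} fails, and your tightness argument for $(\mu^n_k)_{n}$ collapses. This is not cosmetic: $\mathbb{E}[(|\phi_n|,1)]$ diverges like $\sqrt{\log n}$, so controlling individual excursion masses genuinely requires some cancellation or orthogonality input, and the one available in the metric graph setting rests on the sign independence that the discrete model lacks. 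A proof of the conjecture would therefore have to replace both the tightness step and the interface-convergence step with new arguments.
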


Notice that $\A_{-\lambda}$ for a GFF with boundary value $\lambda$ has the same law as $\A_{-2\lambda}$ of a zero boundary GFF. The reason why the boundary values are  equal to $\pm \lambda$ for the sign clusters is the same as why the height gap appears in \cite{SchSh}.

The heuristic for the conjecture goes as follows. Consider the discrete GFF with zero boundary conditions say on the triangular lattice in some domain. Then it was shown in \cite{SchSh} that the level line from $x$ to $y$ on the domain boundary converges to SLE$_4(-1,-1)$. Moreover, it was observed that one can have the joint convergence of level lines between any pair of a countable collection of boundary points. In \cite{AS2} it was noticed that this collection of level lines would be equal to $\A_{-\lambda, \lambda}$. Thus the "outer" boundaries of the cluster connected to the boundary should be given by $\A_{-\lambda, \lambda}$. Let us now consider the positive cluster. It has boundary values $\lambda$ in the limit. In the discrete, to obtain the discrete cluster we could start by taking the metric graph sign cluster connected to this boundary. After exploring this, we would then again see a zero boundary GFF but now for a metric graph. To obtain the discrete sign cluster, we would need to continue to explore, but we would be in the same situation as in the beginning - we would need to explore the boundaries of sign clusters for a zero boundary GFF. In the continuum limit this should correspond to another copy of $\A_{-\lambda,\lambda}$, although this does not directly follow from \cite{SchSh}. Iterating this way we get the following continuum description: the outermost positive cluster is given by taking $\A_{-\lambda, \lambda}$, then FPS $\A_0$ in the loops and then again $\A_{-\lambda, \lambda}$ in all the components that now have zero boundary condition. These steps are iterated until there are no boundaries with $0$ or $\lambda$ boundary value. It follows from uniqueness of first passage sets, Theorem \ref{thm:FPS}
that this set is exactly equal to $\A_{-\lambda}$. Similar considerations support the next steps of the conjecture.

\bibliographystyle{alpha}	
\bibliography{biblio}

\end{document}